\begin{document}
\newcommand{\M}{{\mathcal M}}
\newcommand{\loc}{{\mathrm{loc}}}
\newcommand{\core}{C_0^{\infty}(\Omega)}
\newcommand{\sob}{W^{1,p}(\Omega)}
\newcommand{\sobloc}{W^{1,p}_{\mathrm{loc}}(\Omega)}
\newcommand{\merhav}{{\mathcal D}^{1,p}}
\newcommand{\be}{\begin{equation}}
\newcommand{\ee}{\end{equation}}
\newcommand{\mysection}[1]{\section{#1}\setcounter{equation}{0}}
\newcommand{\laplace}{\Delta}
\newcommand{\pl}{\laplace_p}
\newcommand{\grad}{\nabla}
\newcommand{\pd}{\partial}
\newcommand{\bo}{\pd}
\newcommand{\csub}{\subset \subset}
\newcommand{\sm}{\setminus}
\newcommand{\ssm}{:}
\newcommand{\diver}{\mathrm{div}\,}
\newcommand{\bea}{\begin{eqnarray}}
\newcommand{\eea}{\end{eqnarray}}
\newcommand{\bean}{\begin{eqnarray*}}
\newcommand{\eean}{\end{eqnarray*}}
\newcommand{\thkl}{\rule[-.5mm]{.3mm}{3mm}}
\newcommand{\cw}{\stackrel{\rightharpoonup}{\rightharpoonup}}
\newcommand{\id}{\operatorname{id}}
\newcommand{\supp}{\operatorname{supp}}
\newcommand{\wlim}{\mbox{ w-lim }}
\newcommand{\mymu}{{x_N^{-p_*}}}
\newcommand{\R}{{\mathbb R}}
\newcommand{\N}{{\mathbb N}}
\newcommand{\Z}{{\mathbb Z}}
\newcommand{\Q}{{\mathbb Q}}
\newcommand{\abs}[1]{\lvert#1\rvert}
\newtheorem{theorem}{Theorem}[section]
\newtheorem{corollary}[theorem]{Corollary}
\newtheorem{lemma}[theorem]{Lemma}
\newtheorem{notation}[theorem]{Notation}
\newtheorem{definition}[theorem]{Definition}
\newtheorem{remark}[theorem]{Remark}
\newtheorem{proposition}[theorem]{Proposition}
\newtheorem{assertion}[theorem]{Assertion}
\newtheorem{problem}[theorem]{Problem}
\newtheorem{conjecture}[theorem]{Conjecture}
\newtheorem{question}[theorem]{Question}
\newtheorem{example}[theorem]{Example}
\newtheorem{Thm}[theorem]{Theorem}
\newtheorem{Lem}[theorem]{Lemma}
\newtheorem{Pro}[theorem]{Proposition}
\newtheorem{Def}[theorem]{Definition}
\newtheorem{Exa}[theorem]{Example}
\newtheorem{Exs}[theorem]{Examples}
\newtheorem{Rems}[theorem]{Remarks}
\newtheorem{Rem}[theorem]{Remark}

\newtheorem{Cor}[theorem]{Corollary}
\newtheorem{Conj}[theorem]{Conjecture}
\newtheorem{Prob}[theorem]{Problem}
\newtheorem{Ques}[theorem]{Question}
\newtheorem*{corollary*}{Corollary}
\newtheorem*{theorem*}{Theorem}
\newcommand{\pf}{\noindent \mbox{{\bf Proof}: }}


\renewcommand{\theequation}{\arabic{equation}}
\catcode`@=11 \@addtoreset{equation}{section} \catcode`@=12
\newcommand{\Real}{\mathbb{R}}
\newcommand{\real}{\mathbb{R}}
\newcommand{\Nat}{\mathbb{N}}
\newcommand{\ZZ}{\mathbb{Z}}
\newcommand{\CC}{\mathbb{C}}
\newcommand{\Pess}{\opname{Pess}}
\newcommand{\Proof}{\mbox{\noindent {\bf Proof} \hspace{2mm}}}
\newcommand{\mbinom}[2]{\left (\!\!{\renewcommand{\arraystretch}{0.5}
\mbox{$\begin{array}[c]{c}  #1\\ #2  \end{array}$}}\!\! \right )}
\newcommand{\brang}[1]{\langle #1 \rangle}
\newcommand{\vstrut}[1]{\rule{0mm}{#1mm}}
\newcommand{\rec}[1]{\frac{1}{#1}}
\newcommand{\set}[1]{\{#1\}}
\newcommand{\dist}[2]{$\mbox{\rm dist}\,(#1,#2)$}
\newcommand{\opname}[1]{\mbox{\rm #1}\,}
\newcommand{\mb}[1]{\;\mbox{ #1 }\;}
\newcommand{\undersym}[2]
 {{\renewcommand{\arraystretch}{0.5}  \mbox{$\begin{array}[t]{c}
 #1\\ #2  \end{array}$}}}
\newlength{\wex}  \newlength{\hex}
\newcommand{\understack}[3]{%
 \settowidth{\wex}{\mbox{$#3$}} \settoheight{\hex}{\mbox{$#1$}}
 \hspace{\wex}  \raisebox{-1.2\hex}{\makebox[-\wex][c]{$#2$}}
 \makebox[\wex][c]{$#1$}   }%
\newcommand{\smit}[1]{\mbox{\small \it #1}}
\newcommand{\lgit}[1]{\mbox{\large \it #1}}
\newcommand{\scts}[1]{\scriptstyle #1}
\newcommand{\scss}[1]{\scriptscriptstyle #1}
\newcommand{\txts}[1]{\textstyle #1}
\newcommand{\dsps}[1]{\displaystyle #1}
\newcommand{\dx}{\,\mathrm{d}x}
\newcommand{\dy}{\,\mathrm{d}y}
\newcommand{\dz}{\,\mathrm{d}z}
\newcommand{\dt}{\,\,dt}
\newcommand{\dr}{\,\mathrm{d}r}
\newcommand{\du}{\,\mathrm{d}u}
\newcommand{\dv}{\,\mathrm{d}v}
\newcommand{\dV}{\,\mathrm{d}V}
\newcommand{\ds}{\,\mathrm{d}s}
\newcommand{\dS}{\,\mathrm{d}S}
\newcommand{\dk}{\,\mathrm{d}k}

\newcommand{\dphi}{\,\mathrm{d}\phi}
\newcommand{\dtau}{\,\mathrm{d}\tau}
\newcommand{\dxi}{\,\mathrm{d}\xi}
\newcommand{\deta}{\,\mathrm{d}\eta}
\newcommand{\dsigma}{\,\mathrm{d}\sigma}
\newcommand{\dtheta}{\,\mathrm{d}\theta}
\newcommand{\dnu}{\,\mathrm{d}{\nu'}}

\def\ga{\alpha}     \def\gb{\beta}       \def\gg{\gamma}
\def\gc{\chi}       \def\gd{\delta}      \def\ge{\varepsilon}
\def\gth{\theta}                         \def\vge{\varepsilon}
\def\gf{\phi}       \def\vgf{\varphi}    \def\gh{\eta}
\def\gi{\iota}      \def\gk{\nu}      \def\gl{\lambda}
\def\gm{\mu}        \def\gn{{\nu'}}         \def\gp{\pi}
\def\vgp{\varpi}    \def\gr{\rho}        \def\vgr{\varrho}
\def\gs{\sigma}     \def\vgs{\varsigma}  \def\gt{\tau}
\def\gu{\upsilon}   \def\gv{\vartheta}   \def\gw{\omega}
\def\gx{\xi}        \def\gy{\psi}        \def\gz{\zeta}
\def\Gg{\Gamma}     \def\Gd{\Delta}      \def\Gf{\Phi}
\def\Gth{\Theta}
\def\Gl{\Lambda}    \def\Gs{\Sigma}      \def\Gp{\Pi}
\def\Gw{\Omega}     \def\Gx{\Xi}         \def\Gy{\Psi}

\renewcommand{\div}{\mathrm{div}}
\newcommand{\red}[1]{{\color{red} #1}}

\newcommand{\cqfd}{\begin{flushright}                  
			 $\Box$
                 \end{flushright}}
\newcommand{\Ric}{\mathrm{Ric}}

\newcommand{\D}{\vec{\Delta}}

\numberwithin{theorem}{section}
\numberwithin{equation}{section}


\title{Gaussian heat kernel estimates: from functions to forms}

\author{Thierry Coulhon}
\address{Thierry Coulhon, PSL Research University, 75005 Paris, France}
\email{thierry.coulhon@univ-psl.fr}

\author{Baptiste Devyver}
\address{Baptiste Devyver, Department of Mathematics, Technion, 32000 Haifa, Israel}
\email{devyver@tx.technion.ac.il}

\author{Adam Sikora}
\address{Adam Sikora, Department of Mathematics, Macquarie
University, NSW 2109, Australia}
\email{adam.sikora@mq.edu.au}


\date{\today}

\subjclass[2010]{58J35, 35J47} 
\keywords{Heat kernels on vector bundles, Hodge-De Rham, Schršdinger operators, differential forms on manifolds, Riesz transform}

\begin{abstract}  On a complete non-compact Riemannian manifold satisfying the volume doubling property, we give conditions on the negative part of the Ricci curvature that
ensure that, unless there are  harmonic one-forms, the Gaussian heat kernel upper estimate on functions transfers to one-forms. These conditions do no entail any constraint on the size of the Ricci curvature, only on its decay at infinity.
\end{abstract}

\maketitle
\tableofcontents

\section{Introduction and statement of the results}

\subsection{Preliminaries}

On Riemannian manifolds and more general spaces, heat kernel estimates on functions are fairly well understood (see for instance the surveys \cite{C-s}, \cite{G4}, \cite{SC-s}  and the books  \cite{SC}, \cite{Gbook}).  In short, a uniform upper estimate for the heat kernel on functions is equivalent to an $L^2$ isoperimetric-type functional inequality (see \cite{C-s}).  In the setting of  Riemannian manifolds with the volume doubling property,  the Gaussian upper estimate of the heat kernel  on functions can be characterised in terms of relative Faber-Krahn inequalities and other Sobolev type inequalities  (see \cite{G2}, \cite{SC}, \cite{Gbook},  as well as the more recent \cite{BCS}). By contrast, heat kernel estimates for the Hodge Laplacian acting on forms have been traditionally considered only for small time and on compact manifolds (\cite{BGV}, \cite{Gi}, \cite{R}) or in  special  settings like Lie groups (\cite{Ru}, \cite{MPR}). The main difficulty here is that one cannot rely on the positivity of the heat kernel, or equivalently on the maximum principle. More recently, the question of finding Gaussian estimates valid for all time for the heat kernel acting on forms on non-compact manifolds has been addressed in \cite{CZ} and later on in \cite{D2}. The most up-to-date result, from \cite{D2}, yields Gaussian upper estimates for the heat kernel on $1$-forms if the manifold satisfies some geometric conditions (Sobolev inequality of dimension $n>4$, Euclidean volume growth,  the negative part of the Ricci curvature belonging to $L^{\frac{n}{2}\pm\varepsilon}$, $\varepsilon>0$), and in absence of harmonic $L^2$ forms, the latter condition being  necessary. Our goal in this article is to extend  the results of \cite{D2} to a more general class of manifolds, which do not necessarily satisfy a global Sobolev inequality. More precisely, we shall replace Sobolev by the more natural assumption of a Gaussian upper estimate for the heat kernel acting on functions. As a consequence, we will not be restricted to manifolds with uniform polynomial volume growth, and our results will hold for manifolds with the volume doubling property. Multiple difficulties appear when one tries to do this, and new ideas and techniques have to be introduced on top of those from  \cite{D2}. Before presenting our results, let us first introduce the setting.

Let $M$ be a complete, connected, non-compact Riemannian manifold, endowed with a positive measure $\mu=e^{f}\nu$ that is absolutely continuous with respect to the Riemannian measure $\nu$. We assume that $f$ is smooth. We denote by $\nabla$ the Riemannian gradient and by $\Delta_\mu$  the weighted non-negative Laplace operator defined by $\Delta_\mu u=-\div(\nabla u)-\langle \nabla f,\nabla u\rangle$. In the sequel, we will denote $\Delta_\mu$ simply by $\Delta$, the dependance on the measure $\mu$ being  implicit. Let $d$ denote the geodesic distance, $B(x,r)$ the open  ball for  $d$ with centre $x\in M$ and radius $r>0$, and $V(x,r)$ its volume $\mu\left(B(x,r)\right)$.

We will use the notation $h\lesssim g$ to indicate that there exists a constant $C$ (independent of the important parameters) such that $h\leq Cg$, $h\gtrsim g$ if $g\lesssim h$,  and $h\simeq g$ if $h\lesssim g$ and $h\gtrsim g$.

We will assume that the weighted manifold $(M,d,\mu)$ satisfies the volume doubling property, that is
  \begin{equation}\label{d}\tag{$V\!D$}
     V(x,2r)\lesssim  V(x,r),\quad \forall~x \in M,~r > 0.
    \end{equation}
It follows easily that there exists  $\nu>0$  such that
     \begin{equation*}\label{dnu}\tag{$V\!D_\nu$}
      \frac{V(x,r)}{V(x,s)}\lesssim \left(\frac{r}{s}\right)^{\nu} ,\quad \forall~ x \in M,~r \geq s>0.
    \end{equation*}
It is known (see  \cite[Theorem 1.1]{G1}) that if $M$ is  connected, non-compact, and satisfies \eqref{d}, then the following reverse doubling condition holds:
    there
exist
 $0<\nu'\leq \nu$  such that,
for all $r\geq s>0$ and $x\in M$,
\begin{equation}\label{rnu}\tag{$RD_{\nu'}$}
 \frac{V(x,r)}{V(x,s)}\gtrsim\left(\frac{r}{s}\right)^{\nu'}.
\end{equation}
Let us introduce the following volume lower bound: for some $x_0\in M$  there exists  $\kappa>0$ 
such that   
  \begin{equation}\label{dd}\tag{$V\!L_{\kappa}$}
  V(x_0,r)\gtrsim  r^{\kappa},\qquad\forall r\geq1.
  \end{equation}
It is  clear from \eqref{d}  that  this condition does not depend on the choice of 
$x_0$.  If \eqref{rnu} holds, taking $s=1$  shows that \eqref{dd}  follows at least for  $\kappa=\nu'$.


Let $e^{-t\Delta}$ be the heat operator associated to $\Delta$, and  $p_t(x,y)$ its kernel, so that, for any  compactly supported smooth function $f$ on $M$, there holds:
$$e^{-t\Delta}u(x)=\int_M p_t(x,y)u(y)d\mu(y).$$
It is classical that $p_t$ is smooth, positive, satisfies $p_t(x,y)=p_t(y,x)$, and that under \eqref{d}, $\int_Mp_t(x,y)\,d\mu(y)= 1$ for all $x$, in other words $M$ is stochastically  complete. 

On and off-diagonal estimates of the heat kernel $p_t(x,y)$ have been studied in detail in the past thirty years. Let us start by introducing the on-diagonal estimate:

\begin{equation}\tag{$DU\!E$}
p_{t}(x,x)\lesssim
\frac{1}{V(x,\sqrt{t})}, \quad \forall~t>0,\,\forall\,x\in
 M. \label{due}
\end{equation}
Under  \eqref{d}, \eqref{due} self-improves into an (off-diagonal) Gaussian upper estimate (\cite[Theorem 1.1]{Gr1}, see also \cite[Section 4.2]{CS}):
\begin{equation}\tag{$U\!E$}
p_{t}(x,y)\lesssim
\frac{1}{V(x,\sqrt{t})}\exp
\left(-\frac{d^{2}(x,y)}{Ct}\right), \quad \forall~t>0,\, \mbox{a.e. }x,y\in
 M,\label{UE}
\end{equation}
for some $C>0$. It is known (see \cite{G2}) that the Gaussian upper estimate \eqref{UE} is equivalent to an $L^2$  isoperimetric-type inequality called the relative Faber-Krahn inequality (for a new point of view on this equivalence see \cite{BCS}). Let us also introduce the upper and lower Gaussian estimates for the heat kernel (sometimes called Li-Yau estimates):
\begin{equation}\tag{$LY$}
p_{t}(x,y)\simeq
\frac{1}{V(x,\sqrt{t})}\exp
\left(-\frac{d^{2}(x,y)}{Ct}\right), \quad \forall~t>0,\, \mbox{a.e. }x,y\in
 M,\label{LY}
\end{equation}
where $C$ denotes a possibly different constant in the upper and the lower bound.
it was proved in \cite{LY} that \eqref{LY} holds on a complete manifold with non-negative Ricci curvature endowed with its Riemannian measure.
By a theorem of  Saloff-Coste (see \cite{SC} and the references therein, see also \cite{G1} for an alternative approach of the main implication, as well as the more recent \cite{BCF1}), it is known that under \eqref{d}, \eqref{LY} is equivalent to the following family of scale-invariant $L^2$ Poincar\'{e} inequalities: there is a constant $C$ such that, for every geodesic ball $B=B(x,r)$, and every $u\in C^\infty(B)$,
\begin{equation}\label{P}\tag{$P$}
\int_{B}|u-u_{B}|^2\,d\mu\leq Cr^2\int_{B}|\nabla u|^2\,d\mu,
\end{equation}
where $u_{B}=\frac{1}{\mu(B)}\int_Bu\,d\mu$ denotes the average of $u$ over $B$.  In Remark \ref{nogent} below, we shall also use a weak local $L^1$ version of \eqref{P}: we say that \eqref{P_loc} holds if
\begin{equation}\label{P_loc}\tag{$P_{loc}$}
\int_{B}|u-u_{B}|\,d\mu\leq C_r\int_{B}|\nabla u|\,d\mu,
\end{equation}
where this time the constant $C_{r}$ depends on the radius of $B$ (but not on its center), and is not necessarily linear in $r$. Notice that by standard arguments involving H\"{o}lder inequality, the $L^1$ local Poincar\'{e} inequality \eqref{P_loc} implies its $L^p$ counterpart. It is classical that \eqref{P_loc} holds on a complete manifold with Ricci curvature bounded from below, endowed with its Riemannian measure (see \cite{B}).

%
%
We now recall the notion of non-parabolicity (see for instance \cite[Section 5]{G4} for more information).  One says that $M$ is non-parabolic if
$$\int_1^\infty p_t(x,y)\,dt<+\infty$$
for some (all) $x, y\in M$.  In this case, $G(x,y)$ defined by $$G(x,y)=\int_0^{+\infty} p_t(x,y)\,dt$$ is finite for all $x\neq y$, and is the positive, minimal Green function of $\Delta$.
If furthermore \eqref{d} and \eqref{UE} hold, then the non-parabolicity of $M$ is equivalent to
\begin{equation}\label{vol1}
\int_1^{+\infty}\frac{dt}{V(x_0, \sqrt{t})}<+\infty,
\tag{$V^\infty$}
\end{equation}
for some  $x_0\in M$ (this uses the fact that under \eqref{d} and \eqref{UE}, the heat kernel has an on-diagonal lower bound $p_t(x,x)\geq \frac{C}{V(x,\sqrt{t})}$; see \cite[Theorem 11.1]{G4}). We also introduce a more general integral volume growth condition:
\begin{equation}\label{vol2}
\int_1^{+\infty}\frac{dt}{\left[V(x_0, \sqrt{t})\right]^{1-\frac{1}{p}}}<+\infty\tag{$V^p$}
\end{equation}
for some  $x_0\in M$ and $p\in (1,+\infty]$.
It follows from condition  \eqref{d} that the validity of conditions \eqref{vol2} does not depend on the 
choice of $x_0$. Notice that \eqref{dd} with $\kappa>2$ implies \eqref{vol2} for all $p\in \left(\frac{\kappa}{\kappa-2},+\infty\right]$, hence non-parabolicity under  \eqref{d} and \eqref{UE}.

In this article, we investigate Gaussian estimates for the heat kernel of elliptic operators of Schr\"{o}dinger type, acting on sections of a vector bundle over $M$. We are motivated by the particular case of the Hodge Laplacian $\vec{\Delta}_\mu=dd_\mu^*+d_\mu^*d$, acting on $1$-forms, which we describe now. In this paragraph, in order to make things clearer, we assume the measure $\mu$ to be the Riemannian measure, although, as we will see later, what we are going to say can be extended to the weighted case. Denote by $e^{-t\vec{\Delta}}$ the associated heat operator, and by $\vec{p_t}(x,y)$ its kernel. Thus, for every $x$ and $y$ in $M$, $\vec{p_t}(x,y)$ is a linear map from $T^*_yM$ to $T^*_xM$, where $\Lambda^1T^*M$ is the vector bundle of $1$-forms on $M$. By definition, for every compactly supported  smooth $1$-forms $\omega$ and $\eta$,  there holds:

$$\langle e^{-t\vec{\Delta}} \omega,\eta\rangle=\int_M (\vec{p_t}(x,y)\omega(y),\eta(x))_x\,d\mu(x)d\mu(y).$$
We consider the Gaussian estimate for $\vec{p_t}(x,y)$:
\begin{equation}\tag{$\vec{U\!E}$}
\|\vec{p}_{t}(x,y)\|_{y,x}\lesssim
\frac{1}{V(x,\sqrt{t})}\exp
\left(-\frac{d^{2}(x,y)}{Ct}\right), \quad \forall~t>0,\, \mbox{a.e. }x,y\in
 M,\label{vecUE}
\end{equation}
for some $C>0$.
Here $\|\cdot\|_{y,x}$  denotes the norm of  the operator $\vec{p}_{t}(x,y)$ from $T^*_yM$ to $T^*_xM$ endowed with the Riemannian metrics at $y$ and $x$. 

Gaussian estimates for   $p_t$ and $\vec{p_t}$ have important consequences for the Riesz transform: if $M$ satisfies \eqref{d} and \eqref{UE}, then the Riesz transform $d\Delta^{-1/2}$ extends to a bounded operator on $L^p$, for $1<p\le 2$ (see \cite{CD1}), and if in addition it satisfies \eqref{vecUE} then $d\Delta^{-1/2}$ is also bounded for  $2\le p<+\infty$ (see \cite{CD2}, \cite{CD3} and \cite{Sik}). Let us now recall the Bochner formula for the Hodge Laplacian on $1$-forms (for which we need $\mu$ to be the Riemannian measure):
$$\vec{\Delta}=\nabla^*\nabla+\mathrm{Ric}=\bar{\Delta}+\mathrm{Ric},$$
where $\nabla^*\nabla=\bar{\Delta}$ is called the rough Laplacian, and where $\mathrm{Ric}$ is, at every point $x$, a symmetric endomorphism derived in a canonical way from the Ricci curvature tensor. The Bochner formula allows one to see $\vec{\Delta}$ as a ``generalised Schr\"{o}dinger operator'' with potential $\mathrm{Ric}$. If $M$ has non-negative Ricci curvature, \eqref{UE} holds (see \cite{LY}) and \eqref{vecUE} follows  by  Bochner formula and  domination theory (see \cite{Bes}). Hence  the Riesz transform $d\Delta^{-1/2}$ extends to a bounded operator on $L^p$ for all $1<p<+\infty$. The latter result was originally proved by Bakry \cite{Bak}, with probabilistic techniques. However, it is not fully satisfactory, since pointwise lower bounds for the Ricci curvature are easily destroyed by perturbation, whereas the boundedness of the Riesz transform is preserved under ``mild'' perturbation of the metric and the topology (for example, by modifiying the topology or the metric on a compact set, see \cite{CN}, \cite{C}, \cite{D1}). It is thus natural to look for a condition on the Ricci curvature,  allowing some  negativity, that will imply the boundedness of the Riesz transform on all the $L^p$ spaces (for early results in this direction, see \cite{LXD}). As we have just seen, one way to achieve this goal is to obtain Gaussian estimates \eqref{vecUE} for the heat kernel on $1$-forms, under some (small) amount of negative Ricci curvature. In \cite{CZ}, T.  Coulhon and Q. Zhang proved \eqref{vecUE} using domination theory, for manifolds satisfying \eqref{d} and \eqref{UE} and a ``non-collapsing'' assumption for the volume of geodesic balls, under the hypothesis that the negative part of the Ricci curvature is ``small enough'' in an integral sense; more precisely, assuming moreover that the scale-invariant Poincar\'{e} inequalities \eqref{P} hold, the smallness condition on  the negative part of the Ricci curvature $\mathrm{Ric}_-$ can be written as:
$$\sup_{x\in M}\int_M G(x,y)||\mathrm{Ric}_-(y)||_y\,d\mu(y)<\delta,$$
where $\delta$ is some small (non explicit) constant. However, this smallness condition is not satisfactory, because it entails a {\em global} size bound on the negative part of the Ricci curvature, whereas it is expected that a ``smallness at infinity'' (together with other geometric assumptions preventing the manifold from having several ends) is enough. This is supported by the results of \cite{D2}: assuming that $M$ satisfies a global Sobolev inequality of dimension $n>4$, and has Euclidean volume growth, with $\mathrm{Ric}_-$ in $L^{\frac{n}{2}\pm\varepsilon}$, it is proved that \eqref{vecUE} holds if and only if there is no non-zero $L^2$ harmonic $1$-form. Let us mention at this point that exact $L^2$ harmonic $1$-forms are related to the number of ends of the manifold; in particular, if the Sobolev inequality holds, then the hypothesis that there is no non-zero $L^2$ harmonic $1$-form implies that $M$ has only one end, see \cite{LT}. The condition on $\mathrm{Ric}_-$ is more satisfactory here, because there is no size limitation, more precisely $||\mathrm{Ric}_-||_{\frac{n}{2}\pm\varepsilon}$ can be as large as we want.
 However, the assumption that $M$ satisfies a global Sobolev inequality is not natural, and it is desirable to have a result for manifolds that merely satisfy \eqref{d} and \eqref{UE}. In this case, it is not obvious which smallness condition on the negative part of the Ricci curvature is analogous to $L^{\frac{n}{2}\pm\varepsilon}$. Also, the proof in \cite{D2} uses in several crucial ways the global Sobolev inequality, and thus it is not easy to adapt it to the broader setting of manifolds satisfying only  \eqref{d} and \eqref{UE}. In this article, we extend   the results of \cite{D2} to this more general setting, by mixing the ideas from \cite{D2} with the techniques from \cite{BCS}.

\bigskip

We shall work in the framework of generalised Schr\"{o}dinger operators acting on sections of a vector bundle over $M$. More precisely, consider a generalised Schr\"{o}dinger operator
$$\mathcal{L}=\nabla^*\nabla + \mathcal{R},$$
acting on a finite-dimensional Riemannian  bundle $E\rightarrow M$,  that is a finite-dimensional vector bundle equipped with a scalar product $\left(\cdot, \cdot\right)_x$ depending continuously on $x\in M$ (see for instance \cite[Section E]{Be}). Here $\nabla$ is a connection on $E\rightarrow M$ which is  compatible with the metric, and $\nabla^*\nabla$ is the so-called ``rough Laplacian'' which we will from now on  denote  by $\bar{\Delta}$. Of course, the formal adjoint $\nabla^*$ depends on the measure $\mu$, and we really should write $\nabla^*_\mu$ instead of $\nabla^*$, but to keep notations light we prefer to keep this dependance implicit. The ``potential'' $\mathcal{R}$ is by definition a $L^\infty_{loc}$ section of the vector bundle $\mathrm{End}(E)$, that is, for all $x\in M$, $\mathcal{R}(x)$ is a symmetric endomorphism of $E_x$, the fiber at $x$. Notice that if $E$ is the trivial $M\times \R$, a generalised Schr\"{o}dinger operator on $E$ is just a {\em scalar} Schr\"{o}dinger operator $\Delta+V$, where $V: M\rightarrow \R$ is a real potential. Since $\mathcal{R}$ is in $L^\infty_{loc}$, by standard elliptic regularity, solutions of $\mathcal{L}\omega=0$ are contained in $C_{loc}^{1,\alpha}$ for all $\alpha\in (0,1)$. For a.e. $x\in M$, one can diagonalize $\mathcal{R}(x)$  in an orthonormal basis of $E_x$.
Denote by $\mathcal{R}_+(x)$ the endomorphism corresponding to the non-negative eigenvalues, and  by $-\mathcal{R}_-(x)$ the one corresponding to the negative eigenvalues, so that $\mathcal{R}_+(x)$, $\mathcal{R}_-(x)$ are a.e. non-negative symmetric endomorphisms acting on the fiber $E_x$ and

$$\mathcal{R}=\mathcal{R}_+-\mathcal{R}_-.$$
Notice also that $\mathcal{R}_+$ and $\mathcal{R}_-$ belong to $L^\infty_{loc}$. Denote by $|\cdot|_x$ the norm on $E_x$ derived from $\left(\cdot, \cdot\right)_x$ and by $\|\cdot\|_x$ the induced norm on    $\mathrm{End}(E_x)$. In particular,
$$\|\mathcal{R}_-(x)\|_x=\sup_{v\in E_x,|v|_x=1}|\left(\mathcal{R}_-(x)v,v\right)_x|=\max \sigma(\mathcal{R}_-(x)),$$
where $\sigma(\mathcal{R}_-(x))$ is the (finite)  set of eigenvalues of $\mathcal{R}_-(x)$. Let  $C^\infty(E)$ (resp. $C_0^\infty(E)$) be the set of smooth sections of $E$ (resp. of smooth `compactly supported sections'', that is sections that coincide with the zero section outside a compact set). For $p\geq 1$ we will consider the $L^p$-norm on sections of $E$:
$$||\omega||_p=\left(\int_M|\omega(x)|_x^p\,d\mu(x)\right)^{1/p}$$
with the usual extension for $p=\infty$.
We shall denote  by $L^p(E)$, or simply $L^p$ when  no confusion is possible,  the set of sections of $E$ with finite $L^p$-norm, modulo equality a.e.. Sometimes, depending on the context, $L^p$ will simply refer to real-valued functions. We will denote by 
$$\langle\omega_1,\omega_2\rangle:=\int_M(\omega_1(x),\omega_2(x))_x\,d\mu(x)$$ the scalar product in $L^2(E)$.

From an obvious adaptation of Strichartz's proof that the Laplacian is self-adjoint on a complete manifold (see Theorem 3.13 in \cite{PRS}), we know that if $\mathcal{R}_-$ is bounded, then $\mathcal{L}=\bar{\Delta}+\mathcal{R}_+-\mathcal{R}_-$ is essentially self-adjoint on $C_0^\infty(E)$. If $\mathcal{R}_-$ is not bounded, then we will consider the Friedrichs extension of $\mathcal{L}$, which is an unbounded, self-adjoint operator on $L^2(E)$. We will make a crucial assumption on $\mathcal{L}$: we will assume that it is {\em non-negative}. By this we mean that the associated quadratic form to $\mathcal{L}$ is non-negative, that is for every $\omega\in C_0^\infty(E)$,

$$Q_\mathcal{L}:=\int_M|\nabla \omega|^2+\langle\mathcal{R}\omega,\omega\rangle\geq0.$$
By the spectral theorem, one can then consider $e^{-t\mathcal{L}}$, which is a contraction semigroup on $L^2$. By elliptic estimates again, there exists a kernel $p_t^{\mathcal{L}}$ for $e^{-t\mathcal{L}}$, that is, for every $x,y\in M$, $p_t^{\mathcal{L}}(x,y)$ is a linear map from $E_y$ to $E_x$ such that for every $\omega_i\in L^2(E)$, $i=1,2$,

$$\langle e^{-t\mathcal{L}}\omega_1,\omega_2\rangle=\int_{M\times M}\left(p_t^{\mathcal{L}}(x,y)\omega_1(y),\omega_2(x)\right)_x\,d\mu(x)d\mu(y).$$
By self-adjointness of $\mathcal{L}$, the adjoint of $p_t^{\mathcal{L}}(x,y)$ is $p_t^{\mathcal{L}}(y,x)$ a.e.. Denote by $\|\cdot\|_{y,x}$ the operator norm on $\mathrm{End}(E_y,E_x)$. We will say that the heat kernel associated with $\mathcal{L}$ satisfies Gaussian estimates if for every $t>0$ and a.e $x,y\in M$,

\begin{equation}\label{UEL}\tag{$U\!E_{\mathcal{L}}$}
||p_t^{\mathcal{L}}(x,y)||_{y,x}\lesssim
\frac{1}{V(x,\sqrt{t})}\exp
\left(-\frac{d^{2}(x,y)}{Ct}\right).
\end{equation}
We also consider the on-diagonal estimates for the heat kernel of $\mathcal{L}$: for every $t>0$ and a.e. $x\in M$,

\begin{equation}\label{DUEL}\tag{$DU\!E_{\mathcal{L}}$}
||p_t^{\mathcal{L}}(x,x)||_{x,x}\lesssim
\frac{1}{V(x,\sqrt{t})}.
\end{equation}
Thanks to a general result of A. Sikora (\cite[Theorems 1 and 4]{Sik}), if $\mathcal{L}$ satisfies the Davies-Gaffney estimates, then \eqref{UEL} and \eqref{DUEL} are {\em equivalent}. It turns out that in our setting, $\mathcal{L}$ does indeed satisfy the Davies-Gaffney estimates. This is explained in more details in the Appendix (see Theorem \ref{off-diago} therein). Thus, again in our setting, \eqref{UEL} is equivalent to \eqref{DUEL}.

Let us denote by $W(x)$ the greatest eigenvalue of $\mathcal{R}_-(x)$. Classical domination theory (see \cite{Bes}) implies that,  in the {\em unweighted case}, that is if $\mu$ is the Riemannian measure, for every $\omega\in C_0^\infty(E)$,

\begin{equation}\label{Dom1}
|e^{-t\mathcal{L}}\omega|\leq e^{-t(\Delta-W)}|\omega|.
\end{equation}
It is an important fact that this remains true in the {\em weighted case} as well, see the Appendix for details about this claim. In particular, if $\mathcal{R}$ is everywhere non-negative, for every $\omega\in C_0^\infty(E)$,

\begin{equation}\label{Dom2}
|e^{-t\mathcal{L}}\omega|\leq e^{-t\Delta}|\omega|.
\end{equation}
If furthermore $M$ satisfies \eqref{UE}, one then deduces from \eqref{Dom2} that \eqref{UEL} is satisfied.\\

As we  mentioned before for the case of $1$-forms, an important example of such a generalised Schr\"{o}dinger operator $\mathcal{L}$ is provided in the unweighted case by $\Delta_k=d^*_{k}d_k+d_{k-1}d^*_{k-1}$, the Hodge Laplacian acting on differential $k$-forms (of course, $\Delta_1$ is nothing but $\vec{\Delta}$). It obviously follows  from its definition that $\Delta_k$ is non-negative.  Let us denote by $\Lambda^kT^*M$ the vector bundle of smooth $k$-forms. As shown in \cite{GM}, the operator $\Delta_k$ admits a \textit{Bochner decomposition}:
$$\Delta_k=\bar{\Delta}+\mathscr{R}_k.$$
Here $\mathscr{R}_k$ is a symmetric section of $\mathrm{End}(\Lambda^kT^*M)$ derived from the Riemann curvature operator. An important particular case is $k=1$, for which $\mathcal{R}_1$ is canonically identified with the Ricci curvature. We may denote in short the Gaussian estimate $(UE_{\Delta_k})$ 
by $(\vec{U\!E}_k)$. Note that $(\vec{U\!E}_1)$ is nothing but $(\vec{U\!E})$ already introduced. It turns out that if $\mu$ is not the Riemannian measure, the corresponding {\em weighted} Hodge Laplacians are also of interest: such operators have been considered by E. Witten \cite{Wit} and J-M. Bismut \cite{Bis}, in order to give a new proof of the Morse inequalities, and later by E. Bueler \cite{Bue} to study the cohomology of non-compact manifolds. 
Explicitely, the weighted Hodge Laplacian acting on $k$-forms is defined by
$$\Delta_{\mu,k}=dd_{\mu}^*+d^*_{\mu} d$$ 
(in order to keep notation reasonably light, we make the dependance on $k$ implicit in $d$ and $d^*$; also, we shall often denote $\Delta_{\mu,1}$ by $\vec{\Delta}_{\mu}$), where $d^*_{\mu}$ is the adjoint of $d$, which thus depends on the measure $\mu$. It turns out that a Bochner-type formula still holds for the weighted Hodge-Laplacian; this is discussed in more details in the Appendix. For the time being, let us write without justification the weighted Bochner formula for the Hodge Laplacian:

\begin{equation}\label{Boch_w}
\Delta_{\mu,k}\,\omega=\bar{\Delta}_\mu\,\omega+\mathscr{R}_k\,\omega-\mathscr{H}_f\,\omega,
\end{equation}
where $\bar{\Delta}_\mu=\nabla^*_\mu\nabla$ is the weighted rough Laplacian, and the additional term $\mathscr{H}_f$ is the ``Hessian operator'' of $f$ (see the Appendix), which is a symmetric section of the vector bundle $\mathrm{End}(\Lambda^kT^*M)$. Thus, the term $\mathscr{R}_k-\mathscr{H}_f$ is indeed a generalised potential in the sense of our previous definition. In the particular case $k=1$, the ``potential'' term in formula \eqref{Boch_w} is $\mathrm{Ric}_\mu:=\mathrm{Ric}-\mathscr{H}_f$,  the same term that appears in the iterated {\em carr\'{e} du champ} $\Gamma_2$ of a weighted Laplacian on a Riemannian manifold (see \cite[Proposition 3]{BE}). This is not a coincidence, because the (Bochner) formula for the {\em carr\'{e} du champ} of a weighted Laplacian actually follows from  formula \eqref{Boch_w} for  $1$-forms (this is explained in greater details in the Appendix).

%
%

\bigskip

Coming back to the case of a generalised Schr\"{o}dinger operator, consider the following {\em strong subcriticality} property, a strengthening of  non-negativity which may or may not be satisfied by $\mathcal{L}$: there exists $0<\eta < 1$ such that for every $\omega\in C_0^\infty(E)$,

\begin{equation}\label{SP}
\langle \mathcal{R}_-\omega,\omega\rangle\leq (1-\eta)\langle (\bar{\Delta}+\mathcal{R}_+) \omega,\omega\rangle.
\end{equation}
If \eqref{SP} is satisfied, we will say that $\mathcal{L}$ is $(1-\eta)$-strongly subcritical. Notice that if $M$ is non-parabolic then \eqref{SP} implies $\mathrm{Ker}_{L^2}(\mathcal{L})=0$ (see Lemmas \ref{L7} and \ref{L4c}). This strong subcriticality (or strong positivity) assumption first appeared in the work of B. Simon and E. B. Davies (see \cite{DS}). It has then been used in numerous papers dealing with  perturbation theory of Schr\"{o}dinger operators (see \cite{T}, \cite{AO} and the references therein). In relation to  Riesz transforms and heat kernels on one-forms, this strong subcriticality condition is used in \cite{AO, CZ, M}. In \cite{CZ}, a pointwise estimate weaker than \eqref{vecUE} is proved (basically,  \eqref{vecUE} multiplied by a polynomial factor in time) for manifolds satisfying \eqref{d} and \eqref{UE},  non-collapsing of the volume of balls of radius $1$,  such that $\mathrm{Ric}_-\in L^p\cap L^\infty$ for some $1\leq p<+\infty$ and $\vec{\Delta}$ is strongly subcritical. In \cite{M}, some $L^p\to L^q$ off-diagonal estimates for $e^{-t\vec{\Delta}}$ are proved under \eqref{d}, \eqref{UE}, and strong subcriticality of $\mathrm{Ric}_-$. The exponents $p$ and $q$ that are allowed depend on the best constant in the strong subcriticality inequality \eqref{SP} for $\mathcal{R}=\mathrm{Ric}$. \\

We now come to our ``smallness at infinity'' assumption: we shall say that condition \eqref{condik} is satisfied by a  section $\mathcal{V}\in L^\infty_{loc}$ of the vector bundle $\mathrm{End}(E)$ on $M$ if  $M$ is non-parabolic and  there is a compact subset $K_0$ of $M$ such that
\begin{equation}\label{condik}\tag{$K$}
\sup_{x\in M}\int_{M\setminus K_0}G(x,y)\|\mathcal{V}(y)\|_y\,d\mu(y)<1,\end{equation}
where $G$ is the Green function for the Laplace operator on functions.
In a more compact way, condition \eqref{condik}  may be formulated as $\|\Delta^{-1}(\|\mathcal{V}\|\mathbf{1}_{M\setminus K_0})\|_{\infty}<1$.
In the sequel, we shall identify a section of the  vector bundle $\mathrm{End}(E)$ with the associated composition operator. 
Under this identification, the quantity $\|\Delta^{-1}(\|\mathcal{V}\|\mathbf{1}_{M\setminus K_0})\|_{\infty}$ is also the $L^p-L^p$ norm of the composition operator by
$\Delta^{-1}(\|\mathcal{V}\|\mathbf{1}_{M\setminus K_0})$ on $E$ for all $p\in[1,+\infty]$. Let us mention that \eqref{condik} was introduced in \cite{D3} to study the heat kernel and the Riesz transform of Schr\"{o}dinger operators with potentials that are ``small at infinity''. In fact, \eqref{condik} is a generalization of the more familiar Kato class at infinity $K^\infty(M)$, defined by: $\mathcal{V}\in K^\infty(M)$ if

\begin{equation}\label{Kato}
\lim_{R\to\infty}\sup_{x\in M}\int_{M\setminus B(x_0,R)}G(x,y)\|\mathcal{V}(y)\|_y\,d\mu(y)=0,
\end{equation}
for some (all) $x_0\in M$. This is a slightly more restrictive condition than our condition \eqref{condik}. The notion of Kato class at infinity is often used as an effective tool to obtain Gaussian estimates for the heat kernel of Schr\"{o}dinger operators (see \cite{T}). Also, in \cite{D3}, a sufficient condition for a potential to satisfy \eqref{Kato} is presented, in terms of some weighted $L^p$ spaces: if \eqref{UE} holds, and there is $\varepsilon>0$ such that 

$$\mathcal{V}\in L^{\frac{\nu}{2}-\varepsilon}\left(M,\frac{d\mu(x)}{V(x,1)}\right)\cap L^{\frac{\nu'}{2}+\varepsilon}\left(M,\frac{d\mu(x)}{V(x,1)}\right),$$
where $\nu$ and $\nu'$ are the exponents from \eqref{d} and \eqref{rnu},
then $\mathcal{V}$ belongs to the Kato class at infinity $K^\infty(M)$, and thus in particular satisfies \eqref{condik}. In the case where the volume growth is polynomial with exponent  $n>2$, that is
$$C^{-1}r^n\leq V(x,r)\leq Cr^n,\qquad \forall x\in M,\,\forall r>0,$$ the above condition on $\mathcal{V}$ is the familiar condition $\mathcal{V}\in L^{\frac{n}{2}\pm \varepsilon}$.

\subsection{Our results}

We can now state our main result, which generalizes \cite[Theorem 4, Proposition 11]{D2} to manifolds that do not necessarily satisfy a global Sobolev inequality:

\begin{Thm}\label{main}
Let $(M,\mu)$ be  a complete, non-compact, connected, weighted Riemannian manifold satisfying  \eqref{d} and \eqref{UE}. 
Assume that $(M,\mu)$ satisfies the volume lower bound \eqref{dd} with  $\kappa>2$.
Let $E$ be a vector bundle with  basis $M$ and a connection $\nabla$ compatible with the metric, and let 
$$\mathcal{L}=\nabla^*_\mu\nabla+\mathcal{R}_+-\mathcal{R}_-$$
be a generalised (weighted) Schr\"{o}dinger operator on $E$, such that $\mathcal{R}_-$ satisfies condition~$\eqref{condik}$. Then the following are equivalent:

\begin{enumerate}

\item[i)]  \eqref{UEL} holds.

\item[ii)]  $\mathcal{L}$ is strongly subcritical.

\item[iii)]  $\mathrm{Ker}_{L^p}(\mathcal{L})=\{0\}$, for some (any) $p\in \left(\frac{\kappa}{\kappa-2},+\infty\right)$. 

\end{enumerate}
If in addition
 $\kappa>4$, then $\rm iii)$ is equivalent to $\mathrm{Ker}_{L^2}(\mathcal{L})=\{0\}$.

\end{Thm}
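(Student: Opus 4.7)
My plan is to prove the equivalence by establishing the cycle $\mathrm{(ii)} \Rightarrow \mathrm{(i)} \Rightarrow \mathrm{(iii)} \Rightarrow \mathrm{(ii)}$, simultaneously verifying that triviality of the $L^p$-kernel is independent of the particular $p \in (\kappa/(\kappa-2),\infty)$, and then treating the $\kappa>4$ addendum separately.

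The main implication $\mathrm{(ii)} \Rightarrow \mathrm{(i)}$ is the heart of the theorem. I would follow the strategy of \cite{D2} but replace its systematic use of a global Sobolev inequality by the scale-by-scale self-improvement techniques of \cite{BCS}. The starting point is the Duhamel identity
\begin{equation*}
e^{-t\mathcal{L}} = e^{-t(\bar{\Delta}+\mathcal{R}_+)} + \int_0^t e^{-(t-s)(\bar{\Delta}+\mathcal{R}_+)}\,\mathcal{R}_-\, e^{-s\mathcal{L}}\, ds.
\end{equation*}
Since $\mathcal{R}_+ \geq 0$, domination theory controls the unperturbed semigroup $e^{-t(\bar{\Delta}+\mathcal{R}_+)}$ by the scalar heat semigroup, which satisfies \eqref{UE}. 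Strong subcriticality produces a quantitative gap between the forms of $\mathcal{L}$ and $\bar{\Delta}+\mathcal{R}_+$, yielding $L^2$-decay of $e^{-t\mathcal{L}}$; condition \eqref{condik} provides the bound $\|\Delta^{-1}(\|\mathcal{R}_-\|\mathbf{1}_{M\setminus K_0})\|_\infty < 1$, which controls the integrand of Duhamel's formula outside a compact set. Iterating and splitting the range at $K_0$ yields the on-diagonal bound \eqref{DUEL}; by the Davies–Gaffney property proved in the Appendix together with \cite{Sik}, this self-improves to the full Gaussian estimate \eqref{UEL}. This step will be the main obstacle, because reproducing the arguments of \cite{D2} in the absence of a global Sobolev inequality forces one to localize every estimate on geodesic balls using only \eqref{d}, \eqref{UE} and the relative Faber–Krahn inequality from \cite{BCS}.

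The implication $\mathrm{(i)} \Rightarrow \mathrm{(iii)}$ is a Liouville-type argument. If $\omega \in \mathrm{Ker}_{L^p}(\mathcal{L})$ then $\omega = e^{-t\mathcal{L}}\omega$ for every $t>0$; combining \eqref{UEL} with \eqref{dd} gives $\|e^{-t\mathcal{L}}\|_{1\to\infty} \lesssim t^{-\kappa/2}$ for large $t$, so by interpolation with $L^p$-contractivity,
\begin{equation*}
\|\omega\|_\infty = \|e^{-t\mathcal{L}}\omega\|_\infty \lesssim t^{-\kappa/(2p)}\|\omega\|_p \longrightarrow 0,
\end{equation*}
forcing $\omega \equiv 0$.

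For $\mathrm{(iii)} \Rightarrow \mathrm{(ii)}$ I argue by contraposition. If $\mathcal{L}$ fails to be strongly subcritical, an Agmon–Allegretto–Piepenbrink criticality argument (cf.\ \cite{D3}) produces a non-zero section $\omega$ with $\mathcal{L}\omega = 0$. Using \eqref{condik} to represent $\omega$ through the Green function $G$ of $\Delta$ with source $\mathcal{R}_-\omega$ outside $K_0$, the contraction estimate forces $|\omega(x)| \lesssim G(x_0,x)$ at infinity. Under \eqref{dd}, $G(x_0,\cdot)$ lies in $L^p$ precisely for $p > \kappa/(\kappa-2)$, so $\omega \in L^p$ for every such $p$. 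This both contradicts (iii) for any admissible $p$ and proves the ``some $\Leftrightarrow$ any'' statement. Finally, when $\kappa>4$ the exponent $p=2$ becomes admissible, so the equivalence with triviality of $\mathrm{Ker}_{L^2}(\mathcal{L})$ is immediate in one direction; for the converse, the same Green-function representation applied to an element of $\mathrm{Ker}_{L^2}(\mathcal{L})$ upgrades it to $L^p$ for all admissible $p$, whence the equivalence.
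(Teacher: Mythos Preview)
Your cycle $\mathrm{(i)}\Rightarrow\mathrm{(iii)}$ is essentially the paper's Lemma~\ref{converse}, and your treatment of the $\kappa>4$ addendum is in the right spirit. However, the two substantive implications diverge sharply from the paper's proof, and in each case your outline has a genuine gap.

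\textbf{On $\mathrm{(ii)}\Rightarrow\mathrm{(i)}$.} Your Duhamel scheme perturbs off $\bar{\Delta}+\mathcal{R}_+$ by the \emph{full} $\mathcal{R}_-$. The difficulty is that condition~\eqref{condik} says nothing about $\mathcal{R}_-$ on $K_0$: there $\|\mathcal{R}_-\|$ may be arbitrarily large, so the Duhamel integrand is not small and no iteration converges. ``Splitting the range at $K_0$'' does not help, because for the compact part you have no smallness and no a~priori $L^1$--$L^\infty$ boundedness of $e^{-t\mathcal{L}}$ to feed back in. The paper sidesteps this by a \emph{resolvent} (not semigroup) perturbation around a different unperturbed operator, $\mathcal{H}=\bar{\Delta}+\mathcal{R}_+-\mathcal{W}_\infty$, which already absorbs the tail of $\mathcal{R}_-$ and still enjoys Gaussian bounds (Proposition~\ref{goh}). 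The remaining perturbation $\mathcal{W}_0$ is compactly supported, and one shows that $\mathcal{B}_\lambda=(\mathcal{H}+\lambda)^{-1}\mathcal{W}_0$ is \emph{compact} on $L^\infty$ with spectral radius $<1$ uniformly in $\lambda$ (Lemmas~\ref{L1}, \ref{L4}, \ref{L5}, Proposition~\ref{key}); this is where strong subcriticality actually enters, via the compact self-adjoint operator $\mathcal{A}_0=\mathcal{W}_0^{1/2}\mathcal{H}^{-1}\mathcal{W}_0^{1/2}$. Finally, the passage from resolvent bounds to Gaussian bounds uses a new iterative argument (Theorem~\ref{blabla}) precisely because one cannot assume $e^{-t\mathcal{L}}$ is uniformly bounded on $L^1$. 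None of these ingredients---the choice of $\mathcal{H}$, the compactness/spectral-radius mechanism, or Theorem~\ref{blabla}---appears in your sketch, and without them the argument does not close.

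\textbf{On $\mathrm{(iii)}\Rightarrow\mathrm{(ii)}$.} The Agmon--Allegretto--Piepenbrink principle you invoke is a \emph{scalar} statement; there is no direct analogue producing a nontrivial section of a vector bundle when a generalised Schr\"odinger operator fails strong subcriticality. The paper instead argues functional-analytically: failure of strong subcriticality means $\|\mathcal{A}_0\|_{2\to2}=1$, and since $\mathcal{A}_0$ is compact this forces an eigenvector, which via $\mathcal{H}^{-1/2}$ yields a nonzero element of $\mathrm{Ker}_{H_0^1}(\mathcal{L})$ (Lemma~\ref{L4c}). One then shows $\mathrm{Ker}_{H_0^1}(\mathcal{L})\subset\mathrm{Ker}_{L^p}(\mathcal{L})$ for $p>\kappa/(\kappa-2)$ via the representation $\omega=-\mathcal{H}^{-1}\mathcal{W}_0\,\omega$ and the mapping properties of $\mathcal{H}^{-1}\mathcal{W}_0^{1/2}$ under $(V^p)$ (Lemma~\ref{L7}). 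Your Green-function decay heuristic points toward this last step, but the existence of $\omega$ in the first place requires the compactness argument, not AAP.
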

We now make a number of comments about our result:

\begin{Rem}
{\em
Theorem \ref{main} greatly improves upon \cite[Theorem 3]{D2}: first the geometric assumptions on $M$ are considerably weaker (no global Sobolev inequality assumed, a weaker volume growth condition). Secondly, in \cite{D2} it wasn't noticed that the strong subcriticality of $\mathcal{L}$ not only implies the Gaussian estimates for $e^{-t\mathcal{L}}$, but is actually {\em equivalent} to it. We also would like to point out that there is a misprint in the statement of \cite[Theorem 3]{D2}: $n$ should be strictly larger than $4$, as follows from \cite[Proposition 11]{D2}.
}
\end{Rem}

\begin{Rem}\label{first}
{\em A sufficient condition for $\mathcal{L}$ to be strongly subcritical is that
 
$$ \sup_{x\in M}\int_{M}G(x,y)\|\mathcal{R}_-(y)\|_y\,d\mu(y)<1.$$
In other words, if one can take $K_0=\emptyset$ in \eqref{condik}, then $\mathcal{L}$ is strongly subcritical hence, under the assumptions of  Theorem \ref{main},  \eqref{UEL} is satisfied. In the case where $M$ satisfies \eqref{d} and \eqref{due}, the above implication follows from  Proposition \ref{goh} below (see Section \ref{harm} for details). By using \cite[Lemma 3.5]{D3}, one can in fact show the same statement for every non-parabolic manifold.

}
\end{Rem}
\begin{Rem}\label{second}
{\em 
It is not hard to see that for every $p\in [1,\infty)$, $\mathrm{Ker}_{L^p}(\mathcal{L})=\{0\}$ is a necessary condition for \eqref{UEL} to hold (see Lemma \ref{converse}). Our result asserts that if $p>\frac{\kappa}{\kappa-2}$, then under our other assumptions it is also sufficient.
}
\end{Rem}

\begin{Rem}\label{subc}
{\em 

In Theorem \ref{main}, if instead of \eqref{dd} for some $\kappa>2$, one only assumes that $M$ is non-parabolic, then it is still true that $\mathcal{L}$ strongly subcritical implies the validity of  \eqref{UEL}. See Section 5 for details.
}
\end{Rem}
In the case of a {\em scalar} Schr\"{o}dinger operator, we can further strengthen  Theorem \ref{main}.
Before stating this result, recall that a non-negative, {\em scalar} Schr\"{o}dinger operator is called subcritical if it has positive minimal Green functions (see e.g. \cite{PT}). It is well-known that if $M$ is non-parabolic, then strong subcriticality implies subcriticality: this follows from  one of the many equivalent characterizations of subcriticality. In fact, a Schr\"{o}dinger operator $L$ is subcritical if and only if, for one (or every) $0\leq W\in C_0^\infty(M)$, $W\neq 0$, $L-\varepsilon W\geq0$ for $\varepsilon>0$ small enough. Note that (1.4) implies that $Q_L(u)\geq \varepsilon \int_M |\nabla u|^2$. Combining this with the fact (see for instance \cite[pp.46--47]{A} or \cite{PT}) that $M$ is non-parabolic if and only if there exists a positive function $\rho$ such that
$$\int_M \rho u^2\leq \int_M|\nabla u|^2,\qquad\forall u\in C_0^\infty(M),$$
one obtains easily that strong subcriticality implies subcriticality for $M$ non-parabolic. Then, in the scalar case, one can get rid of the condition \eqref{dd} in Theorem \ref{main}, if one is ready to replace the characterization of Gaussian estimates in terms of non-existence of $L^p$ $L$-harmonic functions by the subcriticality of the operator:

\begin{Cor}\label{scalar}

Let $(M,\mu)$ be a non-parabolic, complete, non-compact, connected, weighted  Riemannian manifold satisfying  \eqref{d} and \eqref{UE}. Let $L=\Delta_\mu+V$ be a non-negative, {\em scalar} weighted Schr\"{o}dinger operator on $M$, and assume that $V_-$ satisfies \eqref{condik}. Then the following are equivalent:

\begin{enumerate}

\item[i)]  $(U\!E_{L})$ holds. 

\item[ii)] $L$ is strongly subcritical.

\item[iii)] $L$ is subcritical.

\end{enumerate}

\end{Cor}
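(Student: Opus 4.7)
The plan is to establish the circular chain (ii) $\Rightarrow$ (i) $\Rightarrow$ (iii) $\Rightarrow$ (ii); the first two arrows are essentially free under our hypotheses, and the heart of the matter is bootstrapping subcriticality of $L$ back to strong subcriticality with the help of \eqref{condik}.

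For (ii) $\Rightarrow$ (i), I invoke Remark \ref{subc} directly: once $(M,\mu)$ is non-parabolic and satisfies \eqref{d} and \eqref{UE}, strong subcriticality already yields $(UE_L)$ with no volume lower bound required. For (i) $\Rightarrow$ (iii), I use that for a scalar Schr\"odinger operator the semigroup $e^{-tL}$ is positivity-preserving (Feynman--Kac), so $p_t^L \geq 0$; combined with the Gaussian bound $(UE_L)$ and non-parabolicity of $M$ (equivalent to $(V^\infty)$ under \eqref{d} and \eqref{UE}), this forces $G_L(x,y) = \int_0^{\infty} p_t^L(x,y)\,dt$ to be finite for $x \neq y$, and strict positivity follows from continuity of $p_t^L$ and connectedness. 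Hence $L$ admits a positive minimal Green function and is subcritical.

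The substance lies in (iii) $\Rightarrow$ (ii), which I would carry out by decomposing $V_- = \bar V_- + V_-^c$ with $\bar V_- := V_- \mathbf{1}_{M\setminus K_0}$ and $V_-^c := V_- \mathbf{1}_{K_0}$, and attacking each piece in its natural way. For the tail, set $\phi := \Delta^{-1}\bar V_-$; \eqref{condik} means $\phi \leq a$ for some $a < 1$, while $\Delta \phi = \bar V_-$ by construction. Then $h := \phi/a$ lies in $(0,1]$ (assuming $\bar V_- \not\equiv 0$; the trivial case is immediate) and satisfies $\Delta h = \bar V_-/a \geq (\bar V_-/a)h$, so it is a bounded, strictly positive supersolution of $\Delta - \bar V_-/a$. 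The Agmon--Allegretto--Piepenbrink ground-state substitution $u = hv$ (exactly as in Proposition \ref{goh}, see also \cite[Lemma~3.5]{D3}) then gives
\begin{equation*}
\langle \bar V_- u, u\rangle \leq a\,\langle \Delta u, u\rangle, \qquad \forall\, u \in C_0^\infty(M).
\end{equation*}
For the bump on $K_0$, I observe that $V_-^c \in L^\infty$ with compact support, so I can pick $W \in C_0^\infty(M)$ with $W \geq 0$, $W \neq 0$, and $W \geq V_-^c$ pointwise (take $W$ larger than $\|V_-\|_{L^\infty(K_0)}$ on a neighbourhood of $K_0$ and cut off smoothly). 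Subcriticality of $L$, in the characterisation recalled just before Corollary \ref{scalar}, provides $\varepsilon > 0$ with $L - \varepsilon W \geq 0$, and hence
\begin{equation*}
\langle L u, u\rangle \geq \varepsilon\,\langle V_-^c u, u\rangle, \qquad \forall\, u \in C_0^\infty(M).
\end{equation*}
Rewriting $\langle L u, u\rangle = \langle(\Delta+V_+)u,u\rangle - \langle V_- u,u\rangle$ in this last inequality, substituting the tail bound, and solving linearly for $\langle V_- u, u\rangle$ yields
\begin{equation*}
\langle V_- u, u\rangle \leq \left(1 - \tfrac{(1-a)\varepsilon}{1+\varepsilon}\right)\langle(\Delta + V_+) u, u\rangle,
\end{equation*}
which is exactly strong subcriticality with $\eta = (1-a)\varepsilon/(1+\varepsilon) > 0$.

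The hard part is the first step of (iii) $\Rightarrow$ (ii), namely promoting the pointwise Green-potential bound supplied by \eqref{condik} to a quadratic-form comparison $\bar V_- \leq a\Delta$. This is what lets us dispose of the tail of $V_-$ without any recourse to the global subcriticality of $L$; the local piece on $K_0$ is then handled cleanly by the defining bump-function characterisation of subcriticality, and the rest is a one-line algebraic recombination.
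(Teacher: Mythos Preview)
Your proof is correct, and the implication (iii)~$\Rightarrow$~(ii) is handled by a genuinely different argument from the paper's. The paper proceeds by contradiction via the null-state machinery of Pinchover--Tintarev \cite{PT}: if $L$ is not strongly subcritical, Lemma~\ref{L4} produces a nonzero $\varphi\in\mathrm{Ker}_{H_0^1}(L)$, which is then shown to be a null-state of $L$, forcing $L$ to be critical. Your route is more direct and elementary: you convert \eqref{condik} into the quadratic-form tail bound $\langle \bar V_- u,u\rangle\le a\langle \Delta u,u\rangle$ via an explicit positive supersolution (this is the content of the paper's \eqref{eps-pos2}, so the step is solidly grounded), then exploit the bump-function characterisation of subcriticality to control the compactly supported piece, and close with a clean algebraic recombination yielding the explicit constant $\eta=(1-a)\varepsilon/(1+\varepsilon)$. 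Your argument avoids the \cite{PT} dependence entirely and is constructive; the paper's argument, on the other hand, fits more naturally into the $H_0^1$/spectral framework already built for the vector-bundle case. For (ii)~$\Rightarrow$~(i) and (i)~$\Rightarrow$~(iii) your reasoning coincides with the paper's.
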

\begin{Rem}
{\em

Corollary \ref{scalar} improves \cite[Theorem 4.1]{D3}, because no assumption is made on $V_+$; actually, it answers a question asked in this paper. Also, it should be emphasized that Theorem~\ref{main} does \textit{not} follow   from Corollary \ref{scalar} by domination theory. Indeed, domination theory yields
$$|e^{-t\mathcal{L}}\omega|\leq e^{-t(\Delta-W)}|\omega|,\qquad\forall \omega\in C_0^\infty(E),$$ 
where $W(x)$ is the greatest eigenvalue of $\mathcal{R}_-(x)$; but in general, the non-negativity of $\mathcal{L}$ does not imply the non-negativity of  $\Delta-W$. Therefore, in general, Corollary \ref{scalar} cannot be applied to $\Delta-W$.

}

\end{Rem}


\begin{Rem}\label{optimal1}
{\em
For $M=\R^n\sharp\R^n$, the connected sum of two copies of $\R^n$, it follows from \cite{LT} that if $n\geq 3$ then there exists a non-constant harmonic function $h$  with $dh\in L^2$. Thus, $dh\in\mathrm{Ker}_{L^2}(\D)$, and according to Lemma \ref{converse} below \eqref{vecUE} cannot hold.  For the case $n=2$ see  Remark \ref{two-planes}.

}
\end{Rem}

\begin{Rem}\label{optimal2}
{\em

The interval $\left(\frac{\kappa}{\kappa-2},+\infty\right)$ in Theorem \ref{main}, iii) is {\em optimal}, even in the scalar case, as the following example demonstrates: consider a function $u\in C^\infty(\R^3)$ such that $u(x)=|x|^{-1}$ for all $|x|\geq 1$,
$u(x) > 0$ for all $x\in \R^3$. 
Then the potential  $V=\frac{\Delta u}{u}$ is smooth and compactly supported. Let $M=\R^3$ and $L=\Delta-V$. Notice that  $\kappa=3$ hence $\frac{\kappa}{\kappa-2}=3$. Moreover  $L$ is non-negative, $(U\!E_{L})$ does not hold, and 
$$\mathrm{Ker}_{L^p}(L)=\left\{\begin{array}{lcr}
\{0\},\,\mbox{for }1\leq p\leq 3,\\
\neq \{0\}\,\mbox{for }p>3.
\end{array}\right.$$ 
This is proved in Proposition \ref{exa} below.
}
\end{Rem}

In order to even define condition \eqref{condik}, one needs $M$ to be non-parabolic. In the case where $M$ is parabolic, it is also relevant to ask for a condition on $\mathcal{R}_-$ that  ensures the validity of \eqref{UEL}. In this direction, we have the following partial result.

\begin{Thm}\label{thm_para}

Assume that $M$ satisfies \eqref{d} and \eqref{UE}, and that $\mathcal{L}$ is strongly subcritical. Then

\begin{enumerate}

\item[i)] If $\nu<2$, then \eqref{UEL} holds.

\item[ii)] If $\nu=2$, then $(V\!EV_{p,q,\gamma})$ (see \cite{BCS} or Section $\ref{resemi}$ below) holds for all $1<p\leq q<+\infty$ and $\gamma\in \R$. In particular, $e^{-t\mathcal{L}}$ is uniformly bounded on $L^p$, for all $p\in (1,+\infty)$.

\end{enumerate}

\end{Thm}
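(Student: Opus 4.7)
The proof of both parts begins with a form-comparison inequality obtained from strong subcriticality together with Kato's inequality for the metric-compatible connection. Strong subcriticality yields $Q_{\mathcal{L}}(\omega)\geq \eta\|\nabla\omega\|_2^2$, and the Kato inequality $\bigl|\nabla|\omega|\bigr|\leq|\nabla\omega|$ then gives
$$Q_{\mathcal{L}}(\omega)\geq \eta\int_M\bigl|\nabla|\omega|\bigr|^2\,d\mu = \eta\, Q_\Delta(|\omega|),$$
so that the analysis of $e^{-t\mathcal{L}}$ on sections of $E$ reduces to a scalar statement involving $|\omega|$, at the cost of losing linearity.

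For part i), under \eqref{d} and \eqref{UE} the scalar Laplacian $\Delta$ satisfies a relative Faber--Krahn inequality on balls. Via the form comparison above, this inequality transfers (with loss of the factor $\eta$) to an analogous Faber--Krahn inequality for $\mathcal{L}$ applied to compactly supported sections of $E$. When $\nu<2$, the volume-doubling exponent is small enough that the localized Faber--Krahn can be upgraded, through the Moser--Nash iteration of \cite{BCS}, into the pointwise on-diagonal bound \eqref{DUEL}. Since $\mathcal{L}$ satisfies the Davies--Gaffney estimates (Theorem \ref{off-diago} in the Appendix), Sikora's equivalence \cite{Sik} then promotes \eqref{DUEL} to the full Gaussian bound \eqref{UEL}.

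For part ii), at the critical value $\nu=2$ the iteration no longer produces a pointwise on-diagonal bound, but the weaker family of estimates $(V\!EV_{p,q,\gamma})$ introduced in \cite{BCS} remains available. Applying the BCS framework to our form inequality yields $(V\!EV_{p,q,\gamma})$ for every $1<p\leq q<+\infty$ and every $\gamma\in\R$. Taking $p=q$ and choosing $\gamma$ suitably extracts the uniform boundedness of $e^{-t\mathcal{L}}$ on $L^p$ for all $p\in(1,+\infty)$.

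The main obstacle is that strong subcriticality provides only a quadratic-form inequality, not the pointwise kernel domination that would directly yield Gaussian bounds. Kato's inequality partially compensates by shifting the analysis onto the scalar function $|e^{-t\mathcal{L}}\omega|$, but the information thereby obtained is only $L^2$-type. The dichotomy $\nu<2$ versus $\nu=2$ is precisely the threshold at which the BCS iteration can still close to a pointwise on-diagonal bound; at the critical exponent one is restricted to the $L^p$--$L^q$ family $(V\!EV_{p,q,\gamma})$, which is why part ii) delivers only uniform $L^p$-boundedness rather than \eqref{UEL}.
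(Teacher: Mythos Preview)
Your sketch points in a direction that is genuinely different from the paper's proof, and it has a real gap.

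The paper does not use Kato's inequality or any transfer of Faber--Krahn to sections. Instead it sets $\mathcal{H}=\bar{\Delta}+\mathcal{R}_+$ (which has Gaussian bounds by domination, since $\mathcal{R}_+\ge 0$) and factors the resolvent as
\[
(\mathcal{L}+\lambda)^{-1}=(\mathcal{H}+\lambda)^{-1/2}\bigl(I-\tilde{\mathcal{A}}_\lambda\bigr)^{-1}(\mathcal{H}+\lambda)^{-1/2},\qquad \tilde{\mathcal{A}}_\lambda=(\mathcal{H}+\lambda)^{-1/2}\mathcal{R}_-(\mathcal{H}+\lambda)^{-1/2}.
\]
Strong subcriticality gives $\|\tilde{\mathcal{A}}_\lambda\|_{2\to2}\le 1-\varepsilon$ uniformly in $\lambda>0$, so the middle factor is uniformly bounded on $L^2$. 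The outer factors are controlled by the Gaussian estimates for $\mathcal{H}$: when $\nu<2$ one has $\sup_t\|V_{\sqrt t}^{1/2}(I+t\mathcal{H})^{-1/2}\|_{2\to\infty}<\infty$, which yields $\sup_t\|V_{\sqrt t}^{1/2}(I+t\mathcal{L})^{-1}\|_{2\to\infty}<\infty$ and hence \eqref{UEL}. For $\nu=2$ the half-resolvent of $\mathcal{H}$ is only $L^2\to L^p$ bounded for finite $p$, and the paper then feeds the resulting $(RV_{2,p})$ estimate for $\mathcal{L}$ into Theorem~\ref{thm_VEV}. The threshold $\nu=2$ thus enters as the exact condition for the \emph{half}-power resolvent of $\mathcal{H}$ to reach $L^\infty$ from $L^2$.

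The gap in your argument is the step ``the localized Faber--Krahn can be upgraded, through the Moser--Nash iteration of \cite{BCS}, into the pointwise on-diagonal bound \eqref{DUEL}''. The passage from relative Faber--Krahn to on-diagonal bounds in \cite{BCS} (and classically) uses either Moser truncations or the Markov property\,/\,uniform $L^1$-boundedness of the semigroup; neither is available for sections of $E$. You cannot truncate a section, and as the paper stresses at the start of Section~\ref{resemi}, $e^{-t\mathcal{L}}$ is \emph{not} known to be uniformly $L^1$-bounded in the presence of $\mathcal{R}_-$. Your form inequality $Q_{\mathcal{L}}(\omega)\ge\eta\,Q_\Delta(|\omega|)$ is correct and does yield a Nash-type inequality for $\mathcal{L}$, but the standard differential-inequality argument turning Nash into ultracontractivity needs $\|e^{-t\mathcal{L}}\omega\|_1\le C\|\omega\|_1$, which you have not established. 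This is precisely the obstacle the paper's resolvent factorisation and Theorem~\ref{blabla} are designed to circumvent. The paper in fact remarks that the form-inequality route (Magniez, Assaad--Ouhabaz) recovers part~ii), but for part~i) gives only uniform $L^p$-boundedness, not the full Gaussian estimate---so part~i) is exactly where your approach falls short.
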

This shows that when $\nu<2$, no smallness at infinity condition on $\mathcal{R}_-$ is needed. Theorem \ref{thm_para} improves upon the case $\kappa<2$ in \cite[Theorem 4.1.15]{BCS}, where a stronger strong subcriticality assumption was made. Part i) also improves a result of Magniez \cite{M}, who proved that under \eqref{d}, \eqref{UE}, if $\mathcal{L}$ is strongly subcritical and $\nu\leq 2$, then the semi-group $e^{-t\mathcal{L}}$ is uniformly bounded on $L^p$, for all $p\in (1,+\infty)$. Statement ii) can also be deduced from  results of Magniez \cite{M} and Assaad-Ouhabaz \cite{AO}, but we provide a different, shorter proof. We do not know whether or not the full Gaussian estimates \eqref{UEL} hold in the case $\nu=2$ (but we suspect that they do not, without additional assumptions on $\mathcal{R}_-$). Let us mention also that it follows from the techniques in \cite{AO} and \cite{M}, together with results of \cite{BCS}, that if $\nu>2$ and $\mathcal{L}$ is $(1-\eta)$-strongly subcritical, then $(V\!EV_{p,q,\gamma})$ holds for all $p_0'< p\leq q<p_0$, $p_0=\frac{2\nu}{(\nu-2)(1-\sqrt{1-\eta})}$, and $\gamma\in \R$. Compare with the case $\kappa\geq 2$ in \cite[Theorem 4.1.15]{BCS}.

\begin{Rem}\label{two-planes}

{\em

When $\nu=\kappa=2$, one can ask what is the relationship between \eqref{vecUE}, the strong subcriticality of $\mathcal{L}$, and the condition $\mathrm{Ker}_{L^p}(\vec{\Delta})=\{0\}$ for some values of $p$. Notice that when $\kappa=2$, the condition $\mathrm{Ker}_{L^p}(\vec{\Delta})=\{0\}$,  $p\in\left(\frac{\kappa}{\kappa-2},\infty\right)$ becomes empty. Thus, the exponent $\frac{\kappa}{\kappa-2}$ may have to be replaced by another exponent. The following example provides a useful insight in this respect: take $M=\R^2\sharp\R^2$, the connected sum of two Euclidean planes. Then, on $M$ the Laplacian on $1$-forms $\vec{\Delta}$ is {\em not} strongly subcritical, and does not satisfy \eqref{vecUE}. Also, 

$$\mathrm{Ker}_{L^p}(\vec{\Delta})\neq\{0\},\,p>2,$$
and if moreover the gluing of the two planes is done in such a way that $M$ has genus $0$, then

$$\mathrm{Ker}_{L^2}(\vec{\Delta})=\{0\}.$$
See Proposition \ref{Lpara} for a proof of these facts and compare with Remark \ref{optimal2}. Given this example, one can ask the following question: does the strong subcriticality of $\mathcal{L}$ together with $\mathrm{Ker}_{L^p}(\mathcal{L})=\{0\}$, $p\in (2,\infty)$ imply \eqref{UEL} on a manifold with quadratic volume growth (such as $\R^2$)?

}
\end{Rem}
It is worth stating Theorem \ref{main} in the particular case where $\mathcal{L}=\Delta_k$, the Hodge Laplacian acting on $k$-forms:

\begin{Cor}\label{hodge}
Let $M$ be a complete non-compact connected manifold satisfying  \eqref{d} and \eqref{due}, such that the negative part of the curvature operator $\mathscr{R}_k$ satisfies  condition $\eqref{condik}$.  Assume also  \eqref{dd} for some $\kappa>2$. Then the heat kernel of $\Delta_k$ satisfies $(\vec{U\!E}_k)$ if and only if $\mathrm{Ker}_{L^p}(\Delta_k)=\{0\}$, for some (any) $p\in \left(\frac{\kappa}{\kappa-2},\infty\right)$. In particular, if $\kappa>4$, the heat kernel of $\Delta_k$ satisfies $(\vec{U\!E}_k)$ if and only if $\mathrm{Ker}_{L^2}(\Delta_k)=\{0\}$. 
\end{Cor}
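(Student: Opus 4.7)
The plan is to verify that the Hodge Laplacian $\Delta_k$ fits into the framework of Theorem \ref{main} applied to the vector bundle $E = \Lambda^k T^*M$, so that the equivalence (i)$\Leftrightarrow$(iii) of that theorem yields exactly the statement to be proved.

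First, I would set up the data. By the Bochner decomposition recalled in the introduction, $\Delta_k = \bar{\Delta} + \mathscr{R}_k$, so writing $\mathscr{R}_k = (\mathscr{R}_k)_+ - (\mathscr{R}_k)_-$ presents $\Delta_k$ as a generalised Schr\"odinger operator in the sense of the paper. The operator is non-negative since by definition $\Delta_k = d^*d + dd^*$ gives the manifestly non-negative quadratic form $\|d\omega\|_2^2 + \|d^*\omega\|_2^2$ on $C_0^\infty(\Lambda^k T^*M)$. The connection $\nabla$ on $\Lambda^k T^*M$ induced by the Levi-Civita connection is compatible with the fibre metric, so all structural hypotheses on the bundle are met.

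Next, I would check the analytic hypotheses of Theorem \ref{main}. The assumption \eqref{d} is given; the on-diagonal bound \eqref{due} combined with \eqref{d} self-improves to the off-diagonal Gaussian upper bound \eqref{UE} by the result of Grigor'yan recalled above. The volume lower bound \eqref{dd} for some $\kappa > 2$ is assumed, and in particular guarantees non-parabolicity of $M$ (see the remark following \eqref{vol2}), so that condition \eqref{condik} makes sense. The hypothesis that the negative part $(\mathscr{R}_k)_-$ satisfies \eqref{condik} is exactly the assumption on $\mathcal{R}_-$ required by Theorem \ref{main}. All the prerequisites are therefore in place.

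Finally, I would invoke Theorem \ref{main} with $\mathcal{L} = \Delta_k$ to obtain the equivalence of
\begin{enumerate}
\item[(i)] $(U\!E_{\Delta_k})$, i.e.\ $(\vec{U\!E}_k)$,
\item[(iii)] $\mathrm{Ker}_{L^p}(\Delta_k) = \{0\}$ for some (any) $p \in (\kappa/(\kappa-2), +\infty)$,
\end{enumerate}
which is the first assertion. For the refinement under $\kappa > 4$, the last sentence of Theorem \ref{main} states that (iii) is then equivalent to $\mathrm{Ker}_{L^2}(\Delta_k) = \{0\}$, giving the second assertion. I do not expect any real obstacle here: the whole content of the corollary is the specialisation of Theorem \ref{main} to the Bochner potential $\mathscr{R}_k$, with the intermediate condition (ii) of strong subcriticality of $\Delta_k$ simply suppressed in the statement.
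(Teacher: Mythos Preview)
Your proposal is correct and matches the paper's approach: the corollary is stated in the paper simply as ``Theorem \ref{main} in the particular case where $\mathcal{L}=\Delta_k$'', with no further proof given. Your verification that the Bochner decomposition, non-negativity of $\Delta_k$, and the passage from \eqref{due} to \eqref{UE} place $\Delta_k$ squarely in the scope of Theorem \ref{main} is exactly what is implicit there.
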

The result of Corollary \ref{hodge} for the Hodge Laplacian acting on $1$-forms has important consequences in terms of further estimates for $p_t$ and  $L^p$-boundedness of the Riesz transform. Indeed, it has been observed in \cite[Section 5.2]{CD3} that $(\vec{U\!E})$ together with \eqref{d} and \eqref{UE}  implies  the following Gaussian estimate for the gradient of the heat kernel:
\begin{equation}\label{gradient}\tag{$G$}
|\nabla_xp_t(x,y)|\lesssim \frac{1}{\sqrt{t}V(x,\sqrt{t})}\exp\left(-\frac{d^{2}(x,y)}{ct}\right),\, \forall~t>0, \,\mbox{a.e. }x,y\in
 M.
\end{equation}
It is well-known (see for instance \cite{LY}) that \eqref{gradient} together with \eqref{d} and \eqref{due} implies the Gaussian heat kernel lower bound. It is also known (see \cite{ACDH}) that \eqref{gradient}  has in turn important consequences for the boundedness of the Riesz transform on $L^p$ (in our case where a Gaussian estimate on $1$-forms is available, the boundedness of the Riesz transform follows also more easily from \cite{Sik}). Let us introduce, for $1<p<+\infty$, the inequality

\begin{equation}\label{R_p}\tag{$R_p$}
\| |\nabla u| \|_p\lesssim |\|\Delta^{1/2}u\|_p,\quad\forall u\in C_0^\infty(M),
\end{equation}
which is equivalent to the Riesz transform $d\Delta^{-1/2}$ being bounded on $L^p$. We also introduce the reverse inequality:

\begin{equation}\label{RR_p}\tag{$RR_p$}
\|\Delta^{1/2}u\|_p\lesssim \| |\nabla u| \|_p,\quad\forall u\in C_0^\infty(M),
\end{equation}
as well as the combination of \eqref{R_p} and \eqref{RR_p}:

\begin{equation}\label{E_p}\tag{$E_p$}
\||\nabla u|\|_p\simeq ||\Delta^{1/2}u||_p,\quad\forall u\in C_0^\infty(M).
\end{equation}
The main result of  \cite{ACDH} is that \eqref{gradient} together with \eqref{d} and \eqref{due} (in fact it has been shown later in \cite{CS2} that  \eqref{due} is implied by the other two assumptions) implies \eqref{E_p} for all $p\in (1,\infty)$, hence we have the following consequence of Corollary  \ref{hodge}:

\begin{Cor}\label{4cons}
Let $(M,\mu)$ be a weighted, complete non-compact connected manifold satisfying  \eqref{d} and \eqref{UE}. Recall the weighted Ricci tensor $\mathrm{Ric}_\mu:=\mathrm{Ric}-\mathscr{H}_f$, if $\mu=e^f\nu$. Assume that $\left(\mathrm{Ric}_\mu\right)_-$ satisfies condition \eqref{condik}.  Assume moreover that $M$ satisfies \eqref{dd} for some $\kappa>2$, and that $\mathrm{Ker}_{L^p}(\vec{\Delta}_{\mu})=\{0\}$ for some $p\in (\frac{\kappa}{\kappa-2},\infty)$. Then:

\begin{enumerate}

\item The  heat kernel on functions also satisfies Gaussian upper and lower bounds, i.e. \eqref{LY} holds; equivalently, the scale-invariant Poincar\'{e} inequalities \eqref{P} hold.

\item The Gaussian estimate for the gradient of the heat kernel \eqref{gradient}  holds.

\item $(E_p)$ holds  for all $p\in (1,\infty)$.

\end{enumerate}

\end{Cor}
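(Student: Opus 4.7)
The plan is to assemble Corollary \ref{4cons} by specializing Theorem \ref{main} to the weighted Hodge Laplacian on $1$-forms and then chaining the resulting Gaussian estimate with the standard self-improvement results recalled just before the statement.

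First I would apply Theorem \ref{main} with $E = \Lambda^1 T^*M$ and $\mathcal{L} = \vec{\Delta}_\mu$. By the weighted Bochner formula \eqref{Boch_w} with $k=1$, one has $\vec{\Delta}_\mu = \bar{\Delta}_\mu + \mathrm{Ric}_\mu$, so $\vec{\Delta}_\mu$ is precisely a generalized weighted Schrödinger operator with potential $\mathcal{R} = \mathrm{Ric}_\mu$, hence $\mathcal{R}_- = (\mathrm{Ric}_\mu)_-$. All hypotheses of Theorem \ref{main} are assumed: \eqref{d}, \eqref{UE}, \eqref{dd} with $\kappa>2$, condition \eqref{condik} for $(\mathrm{Ric}_\mu)_-$, and $\mathrm{Ker}_{L^p}(\vec{\Delta}_\mu)=\{0\}$ for some $p \in \bigl(\tfrac{\kappa}{\kappa-2},+\infty\bigr)$. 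The implication iii)$\Rightarrow$i) of Theorem \ref{main} then delivers \eqref{vecUE}, the Gaussian upper bound for the heat kernel on $1$-forms.

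Next I derive conclusion (2): under \eqref{d} and \eqref{UE}, the Gaussian estimate \eqref{vecUE} on $1$-forms self-improves into the gradient Gaussian estimate \eqref{gradient} for the heat kernel on functions, via the observation recalled from \cite[Section 5.2]{CD3} in the paragraph preceding the corollary statement. Once \eqref{gradient} is in hand, the remaining two conclusions are straightforward citations. For (1), the standard argument going back to \cite{LY} shows that \eqref{gradient} combined with \eqref{d} and \eqref{due} forces the matching Gaussian lower bound, so that the full two-sided estimate \eqref{LY} holds; the equivalence of \eqref{LY} with \eqref{P} is Saloff-Coste's theorem, already invoked in the introduction. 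For (3), I apply the main result of \cite{ACDH} — also recalled before the statement — according to which \eqref{gradient} together with \eqref{d} and \eqref{due} implies \eqref{E_p} for every $p \in (1,+\infty)$.

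The main obstacle, as far as I can see, is essentially just bookkeeping rather than genuine difficulty: one must check that the weighted Bochner identity \eqref{Boch_w} for $k=1$ puts $\vec{\Delta}_\mu$ exactly in the form required by Theorem \ref{main} (with $\nabla^*_\mu$ and the weighted rough Laplacian $\bar{\Delta}_\mu$ as defined in the introduction), a point for which the authors refer to the Appendix. Once this identification is made, the corollary is a clean synthesis of Theorem \ref{main} with the three previously established implications $(\vec{U\!E}) + (V\!D) + (U\!E) \Rightarrow (G)$ from \cite{CD3}, $(G) + (V\!D) + (DU\!E) \Rightarrow (LY)$ from \cite{LY}, and $(G) + (V\!D) + (DU\!E) \Rightarrow (E_p)$ from \cite{ACDH}.
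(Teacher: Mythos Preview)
Your proposal is correct and follows essentially the same route as the paper: Corollary~\ref{4cons} is presented there as an immediate consequence of Corollary~\ref{hodge} (i.e.\ Theorem~\ref{main} specialized to $\vec{\Delta}_\mu$) together with the chain of implications $(\vec{U\!E})\Rightarrow(G)\Rightarrow(LY)$ and $(G)\Rightarrow(E_p)$ recalled in the paragraphs just before the statement, which is exactly what you spell out.
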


\begin{Rem}
{\em
More generally, the conclusions of Corollary \ref{4cons} hold if one only assumes that $M$ is non-parabolic, satisfies \eqref{d}, \eqref{UE}, and $\left(\mathrm{Ric}_\mu\right)_-$ satisfies \eqref{condik} and $\vec{\Delta}_{\mu}$ is strongly subcritical (see Remark \ref{subc}). It also holds if $M$ satisfies \eqref{d}, \eqref{UE}, $\nu<2$ and $\vec{\Delta}_{\mu}$ is strongly subcritical (see Theorem \ref{thm_para}).
}
\end{Rem}
We can also deduce a boundedness result for the Riesz transform on $k$-forms; first let us introduce  some elements of Hodge theory on non-compact manifolds. Define $H_2^k(M)$, the $k$-th space of reduced $L^2$ cohomology, by

$$
H_2^k(M)=\frac{\{\omega\in L^2(\Lambda^kT^*M)\,;\,d\omega=0\}}{\mathrm{cl}(dC_0^\infty(\Lambda^{k-1}T^*M))},
$$
where $\mathrm{cl}$ denotes the closure for the $L^2$ norm. Define $\mathcal{H}^k(M)$ to be the space of $L^2$ harmonic $k$-forms, that is the space of $\omega\in L^2(\Lambda^kT^*M)$ such that $\Delta_k\omega=0$ (a priori in the distribution sense, and a posteriori in the $C^\infty$ sense by elliptic regularity). It is known that

$$H_2^k(M)\simeq \mathcal{H}^k(M)=\{\omega\in L^2(\Lambda^kT^*M)\,;\,d\omega=d^*\omega=0\},$$
where $d\omega$ and $d^*\omega$ are meant in the distribution sense (see \cite{DR}).

\begin{Cor}\label{Riesz_hodge}

Let $M$ be a complete non-compact connected manifold satisfying  \eqref{d}, \eqref{UE}, and \eqref{dd} for some $\kappa>4$. Let $\mathscr{R}_k$ be the curvature operator  so that the Bochner formula for the Hodge Laplacian on $k$-forms writes $\Delta_k=\bar{\Delta}+\mathscr{R}_k$. Assume that $(\mathscr{R}_k)_-$ satisfies condition \eqref{condik}, and that $H_2^k(M)=\{0\}$. Then the Riesz transform on $k$-forms $(d_k+d_{k-1}^*)\Delta_k^{-1/2}$ is bounded on $L^p$, for all $p\in (1,2]$. If moreover $(\mathscr{R}_{k+1})_-$ (resp. $(\mathscr{R}_{k-1})_-$) satisfies \eqref{condik} and  $H^{k+1}_2(M)=\{0\}$ (resp. $H^{k-1}_2(M)=\{0\}$), then $d_k\Delta_k^{-1/2}$ (resp. $d_{k-1}^*\Delta_k^{-1/2}$) is bounded on $L^p$, for all $p\in (1,\infty)$.

\end{Cor}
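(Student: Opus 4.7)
The plan is to combine Corollary \ref{hodge} with the Coulhon--Duong approach to Riesz transforms for $p\in(1,2]$, and to use commutation together with duality to handle $p\in[2,\infty)$. Under the standing hypotheses (in particular $\kappa>4$), Corollary \ref{hodge} applied to $k$ yields $(\vec{U\!E}_k)$, because $H_2^k(M)\simeq \mathrm{Ker}_{L^2}(\Delta_k)=\{0\}$. The same reasoning applied with $k$ replaced by $k+1$ (respectively $k-1$), under the corresponding additional hypotheses of the second half of the statement, yields $(\vec{U\!E}_{k+1})$ (respectively $(\vec{U\!E}_{k-1})$).

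For the first assertion, set $D:=d_k+d_{k-1}^*$, viewed as an operator from $L^2(\Lambda^k T^*M)$ into $L^2(\Lambda^{k+1} T^*M)\oplus L^2(\Lambda^{k-1} T^*M)$. Since $d\circ d=0$ and $d^*\circ d^*=0$, one computes $D^*D=d_k^*d_k+d_{k-1}d_{k-1}^*=\Delta_k$, whence $\|D\omega\|_2=\|\Delta_k^{1/2}\omega\|_2$ and $D\Delta_k^{-1/2}$ is bounded (in fact partially isometric) on $L^2$. Combining this with $(\vec{U\!E}_k)$ and the $L^2$ Davies--Gaffney type off-diagonal bounds for $\sqrt{t}\,d_k e^{-t\Delta_k}$ and $\sqrt{t}\,d_{k-1}^* e^{-t\Delta_k}$ (which follow by analyticity of the heat semigroup and integration by parts against the Gaussian estimates), one applies the Coulhon--Duong Calder\'{o}n--Zygmund argument (\cite{CD1}, and its generalization by Sikora \cite{Sik}) to obtain $L^p$-boundedness of $D\Delta_k^{-1/2}$ for all $p\in(1,2]$.

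For the second assertion concerning $d_k\Delta_k^{-1/2}$, the range $p\in(1,2]$ is already contained in the first assertion (since $\|d_k\omega\|_p\le\|D\omega\|_p$). For $p\in[2,\infty)$, I would argue by duality, exploiting the commutation $d_k\Delta_k=\Delta_{k+1}d_k$, which is valid on smooth compactly supported forms. Via the spectral theorem this yields $d_k\Delta_k^{-1/2}=\Delta_{k+1}^{-1/2}d_k$ on a dense subspace; taking formal adjoints, the $L^p$-boundedness of $d_k\Delta_k^{-1/2}$ is equivalent to the $L^{p'}$-boundedness of $d_k^*\Delta_{k+1}^{-1/2}$ with $p'\in(1,2]$. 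Under the additional hypotheses on $(\mathscr{R}_{k+1})_-$ and $H_2^{k+1}(M)$, applying the first assertion with $k$ replaced by $k+1$ gives $L^{p'}$-boundedness of $(d_{k+1}+d_k^*)\Delta_{k+1}^{-1/2}$, hence in particular of $d_k^*\Delta_{k+1}^{-1/2}$, for $p'\in(1,2]$; duality concludes. The argument for $d_{k-1}^*\Delta_k^{-1/2}$ is symmetric, using the first assertion with $k$ replaced by $k-1$ and the commutation $d_{k-1}^*\Delta_k=\Delta_{k-1}d_{k-1}^*$.

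The main technical obstacle lies in verifying the Davies--Gaffney type $L^2$ off-diagonal estimates for $\sqrt{t}\,d_k e^{-t\Delta_k}$ and $\sqrt{t}\,d_{k-1}^* e^{-t\Delta_k}$ that feed the Coulhon--Duong machinery, and in making the spectral-calculus identity $d_k\Delta_k^{-1/2}=\Delta_{k+1}^{-1/2}d_k$ rigorous on a sufficiently large common domain so that the duality step truly transfers boundedness. Both steps are standard but require care, especially since $d_k$ and $d_{k-1}^*$ are not scalar operators and the Hodge Laplacians on forms of different degrees must be handled simultaneously.
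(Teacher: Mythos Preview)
Your proposal is correct and follows essentially the same route as the paper: apply Corollary~\ref{hodge} to obtain Gaussian estimates for $\Delta_k$ (and for $\Delta_{k\pm1}$ under the extra hypotheses), observe that $(d_k+d_{k-1}^*)\Delta_k^{-1/2}$ is an $L^2$-isometry, invoke Sikora's theorem for the range $p\in(1,2]$, and then use duality together with the commutation $d_k\Delta_k^{-1/2}=\Delta_{k+1}^{-1/2}d_k$ to reach $p\in[2,\infty)$. One small simplification: the paper appeals directly to \cite[Theorem~5]{Sik}, whose input is just the Gaussian bound $(\vec{U\!E}_k)$ plus $L^2$-boundedness of the Riesz transform, so the Davies--Gaffney estimates for $\sqrt{t}\,d_ke^{-t\Delta_k}$ and $\sqrt{t}\,d_{k-1}^*e^{-t\Delta_k}$ that you flag as a technical obstacle are not actually needed.
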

Of course, there is a corresponding statement in the case $2<\kappa\leq 4$ where the condition $\mathrm{Ker}_{L^p}(\Delta_k)=\{0\}$ appears instead of $H_2^k(M)=\{0\}$. There is also a weighted version of Corollary \ref{Riesz_hodge}. We leave it to the reader.

The result of Corollary \ref{Riesz_hodge} is useful because one knows how to compute the reduced $L^2$ cohomology of $M$ in several important cases, for example if $M$ is asymptotically Euclidean, or flat outside a compact set (see \cite{C2} and references therein). However, let us point out that the conditions $H_2^{k-1}(M)=H_2^k(M)=H_2^{k+1}(M)=\{0\}$ are not optimal to obtain the boundedness of the Riesz transform on $k$-forms: for example, for $k=0$ Corollary \ref{Riesz_hodge} states that the Riesz transform is bounded on all the $L^p$ spaces, on an asymptotically Euclidean manifold (of dimension $>4$) if $H^1_2(M)=\{0\}$. By results of \cite{C3}, one knows how to compute $H^1_2(M)$. Indeed, the asymptotically Euclidean manifold $M$ is diffeomorphic to a compact manifolds with $b$ punctures, i.e. $M\simeq\bar{M}\setminus \{p_1,\cdots,p_b\}$, and each $p_i$ corresponds to an end of $M$; then

$$\mathrm{dim}(H_2^1(M))=b_1(\bar{M})+b-1,$$
so that $H_2^1(M)=\{0\}$ if and only if $M$ has only one end, and $b_1(\bar{M})$, the first Betti number of the compact manifold $M$, is zero. However, it follows from  \cite{CCH} that for an asymptotically Euclidean manifold, the boundedness on all $L^p$ spaces of the Riesz transform is equivalent to $M$ having only one end. Therefore, the condition $b_1(\bar{M})=0$ is superfluous. 

Let us also mention that  boundedness results for the Riesz transform on forms on asymptotically conic manifolds are presented in the recent paper \cite{GS}.   

\bigskip

Finally, we prove a boundedness result for the Riesz transform on functions $d\Delta^{-1/2}$ that allows the presence of harmonic $1$-forms. It extends \cite[Theorem 5]{D2} to manifolds satisfying \eqref{d} and \eqref{UE}.

\begin{Thm}\label{Riesz}

Let $M$ be a complete non-compact connected manifold satisfying \eqref{d} and \eqref{UE}, and such that $\mathrm{Ric}_-$ satisfies \eqref{condik}. Assume also that \eqref{dd} holds for some $\kappa>3$. Then the Riesz transform $d\Delta^{-1/2}$ is bounded on $L^p$ for all $p\in (1,\kappa)$.

\end{Thm}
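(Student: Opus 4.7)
The plan is to split on the range of $p$. For $p\in(1,2]$, the boundedness of $d\Delta^{-1/2}$ on $L^p$ is already known to hold merely from $(V\!D)$ and $(U\!E)$ by the classical theorem of Coulhon--Duong \cite{CD1}, so no further argument is required in this range. The novel content lies entirely in establishing the boundedness for $p\in[2,\kappa)$; the proof strategy is to follow \cite[Theorem 5]{D2}, replacing the global Sobolev inequality used there by the weaker assumptions $(V\!D)$, $(U\!E)$, and $(V\!L_\kappa)$, and feeding in the machinery developed for Theorem \ref{main}.

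The key elementary observation is that $d$ is orthogonal to $\mathrm{Ker}_{L^2}(\vec{\Delta})$: if $\omega\in\mathrm{Ker}_{L^2}(\vec{\Delta})$ then $d^*\omega=0$ (by \cite{DR}), so that $\langle du,\omega\rangle=\langle u,d^*\omega\rangle=0$ for all $u\in C_0^{\infty}(M)$. Writing $H_\infty$ for the orthogonal projection onto $\mathrm{Ker}_{L^2}(\vec{\Delta})$ and $P^\perp=I-H_\infty$, this gives $H_\infty\circ d=0$. Combining with the intertwining identity $de^{-t\Delta}=e^{-t\vec{\Delta}}d$ and the spectral representation $\Delta^{-1/2}=\pi^{-1/2}\int_0^\infty e^{-t\Delta}\,t^{-1/2}\,dt$, one obtains the formal identity
$$d\Delta^{-1/2}u=\frac{1}{\sqrt{\pi}}\int_0^\infty e^{-t\vec{\Delta}}(du)\,\frac{dt}{\sqrt{t}}=\frac{1}{\sqrt{\pi}}\int_0^\infty P^\perp e^{-t\vec{\Delta}}(du)\,\frac{dt}{\sqrt{t}},$$
so it is enough to control the projected semigroup $P^\perp e^{-t\vec{\Delta}}$ rather than $e^{-t\vec{\Delta}}$ itself --- exactly the point that allows the argument to tolerate nonzero $L^2$ harmonic one-forms.

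The main step is to establish Gaussian-type upper estimates, or at least sufficiently strong $L^p$ off-diagonal bounds with good time decay, for the kernel of $P^\perp e^{-t\vec{\Delta}}$. Morally, on the range of $P^\perp$ the operator $\vec{\Delta}$ is strongly subcritical, so the tools developed in the proof of Theorem \ref{main} --- namely the combination of the Kato-type smallness condition $(K)$ on $\mathrm{Ric}_-$ with the techniques from \cite{BCS} that substitute for the missing Sobolev inequality --- should adapt to yield such estimates. Once these are available, the $L^p$-boundedness of $d\Delta^{-1/2}$ for $p\in(2,\kappa)$ is obtained by inserting them into the integral representation above and applying a Calderón--Zygmund / singular integral argument in the metric-measure setting, as in \cite{CD1,ACDH}, whose validity only requires $(V\!D)$ and $(U\!E)$. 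The threshold $p<\kappa$ and the hypothesis $\kappa>3$ come out of a direct computation: together with $(V\!L_\kappa)$, they are exactly what is needed to make the large-time integral $\int_1^\infty\|P^\perp e^{-t\vec{\Delta}}\|_{p\to p}\,t^{-1/2}\,dt$ converge.

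The principal obstacle is the Gaussian estimate for $P^\perp e^{-t\vec{\Delta}}$ without a global Sobolev inequality. A natural route is to decompose $\mathrm{Ric}_-=\mathrm{Ric}_-\mathbf{1}_{K_0}+\mathrm{Ric}_-\mathbf{1}_{M\setminus K_0}$, use the smallness bound
$$\sup_{x\in M}\int_{M\setminus K_0}G(x,y)\|\mathrm{Ric}_-(y)\|_y\,d\mu(y)<1$$
from $(K)$ to treat the tail as a controllable perturbation via the resolvent techniques underlying Theorem \ref{main}, and handle the compactly supported piece using the local regularity consequences of $(U\!E)$ and the decay of $G(x,y)$ at infinity provided by $(V\!L_\kappa)$ with $\kappa>3$. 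The nontrivial projection $H_\infty$ (allowed here, unlike in Theorem \ref{main}) is absorbed by the identity $H_\infty d=0$ and therefore never interferes with the bound on $d\Delta^{-1/2}$ itself.
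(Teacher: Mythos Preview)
Your plan --- project off the harmonic forms and seek Gaussian or off-diagonal estimates for $P^\perp e^{-t\vec{\Delta}}$ --- is \emph{not} the paper's route, and the step you yourself flag as ``the principal obstacle'' is a genuine gap. The projection $H_\infty$ is non-local, so there is no reason for $P^\perp e^{-t\vec{\Delta}}$ to have a Gaussian kernel, and the machinery behind Theorem~\ref{main} is built for generalised Schr\"odinger operators $\bar\Delta+\mathcal{R}$, not for their restrictions to spectral subspaces; you give no mechanism to carry this through. Your explanation of where $\kappa>3$ and $p<\kappa$ come from is also incorrect: if $P^\perp e^{-t\vec{\Delta}}$ were uniformly bounded on $L^p$ then $\int_1^\infty\|P^\perp e^{-t\vec{\Delta}}\|_{p\to p}\,t^{-1/2}\,dt$ would diverge regardless of $\kappa$, so those thresholds cannot arise this way.

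The paper avoids the projected semigroup entirely by introducing an \emph{auxiliary potential}. One picks a smooth non-negative compactly supported $\mathcal{W}$ so that $(\mathrm{Ric}+\mathcal{W})_-$ still satisfies $(K)$ with $K_0=\emptyset$; then $\vec{\Delta}+\mathcal{W}$ is strongly subcritical, Theorem~\ref{main} gives genuine Gaussian estimates for $e^{-t(\vec{\Delta}+\mathcal{W})}$, and by \cite{Sik} and duality $(\vec{\Delta}+\mathcal{W})^{-1/2}d$ is bounded on $L^p$ for all $p\ge 2$. The remaining work, for fixed $3<p<\kappa$, is split into two claims: (i) $d(\Delta+\mathcal{W})^{-1/2}$ is bounded on $L^p$, proved via an approximate commutation identity for the Poisson semigroups (the wave-equation method of \cite{C,D2}), with an error term supported in $K_0$; and (ii) the difference $d\Delta^{-1/2}-d(\Delta+\mathcal{W})^{-1/2}$ is bounded on $L^p$, handled as in \cite{AO,COM}. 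The restrictions $\kappa>3$ and $p<\kappa$ enter precisely in the integrability of the commutation error in step~(i): they are exactly the conditions making
\[
\int_{\{2r\sqrt{t}\le s\}} e^{-r^2}\min\bigl(1,t^{-\frac{\kappa}{2}(1-\frac{1}{p})}\bigr)(1+s)^{-\kappa/p}\,dt\,dr\,ds<\infty.
\]
Interpolation with the Coulhon--Duong range $(1,2]$ then gives the full interval $(1,\kappa)$.
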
 
In particular, in the case where $M$ is the connected sum of two Euclidean spaces $\R^n$, $n>3$, one recovers the main result of \cite{CCH}.

\begin{Rem}\label{nogent}

{\em

As a consequence of Theorem \ref{Riesz}, if $M$ satisfies \eqref{d} and \eqref{UE}, and is such that $\mathrm{Ric}_-$ satisfies \eqref{condik}, then the Riesz transform is bounded on $L^p$, for all $p\in (1,\delta)$, where

$$\delta=\sup\{\kappa\,;\,\eqref{dd}\mbox{ holds}\},$$
provided $\delta>3$. If furthermore $M$ satisfies the local Poincar\'e inequality \eqref{P_loc}, it follows from \cite{D3} that $\delta$ is equal to the parabolic dimension of $M$, that is the infimum of $p$'s such that $M$ is $p$-parabolic. The interval $(1,\delta)$ for the boundedness of the Riesz transform is actually sharp, without further assumption on $M$, as follows from \cite[Theorem B]{C1}. We point out that the parabolic dimension has recently proved to be a relevant exponent in problems related to the Riesz transform: see \cite{D1} (gluing results for the Riesz transform), \cite{C1} (Riesz transform on manifolds having quadratic curvature decay), and \cite{D3}  (Riesz transform of Schr\"{o}dinger operators).

}

\end{Rem}

\subsection{Strategy for the proof of the main result}

Let us now explain the ideas involved in the proof of Theorem \ref{main}. First, the implication i)$\Rightarrow$ iii) is easy enough and we give its proof right away:

\begin{Lem}\label{converse}
Let $M$ be a non-compact Riemannian manifold satisfying \eqref{d}, $E$  a vector bundle with  basis $M$, and $\mathcal{L}$  a generalised Schr\"{o}dinger operator on $E$. If \eqref{UEL} holds, then, for every $1 \leq p < +\infty$,
$$\mathrm{Ker}_{L^p}(\mathcal{L})=\{0\}.$$
\end{Lem}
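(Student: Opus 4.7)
The plan is to turn the $L^p$ harmonicity of $\omega$ into invariance under the semigroup, and then use the Gaussian kernel bound together with volume doubling to force $\omega$ to vanish pointwise. First I would observe that, by the Gaussian upper bound \eqref{UEL} and volume doubling \eqref{d}, the integral kernel $\|p_t^{\mathcal{L}}(x,y)\|_{y,x}$ is dominated by a symmetric Gaussian-type kernel, which shows that $e^{-t\mathcal{L}}$ extends to a uniformly bounded operator on every $L^p(E)$, $1 \leq p \leq \infty$. If $\omega\in L^p(E)$ satisfies $\mathcal{L}\omega=0$ (understood in the sense dual to $C_0^\infty(E)$), then by elliptic regularity $\omega$ is smooth, and a standard argument shows $e^{-t\mathcal{L}}\omega=\omega$ a.e. for all $t>0$ (for instance by pairing against $C_0^\infty$ test sections and using self-adjointness of $\mathcal{L}$ on $L^2$ to differentiate in $t$, together with the $L^p$-boundedness just established).

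Next, for almost every $x\in M$ and any $t>0$, I would write
\begin{equation*}
|\omega(x)|_x=|e^{-t\mathcal{L}}\omega(x)|_x\leq \int_M \|p_t^{\mathcal{L}}(x,y)\|_{y,x}\,|\omega(y)|_y\,d\mu(y)\lesssim \int_M \frac{e^{-d^2(x,y)/Ct}}{V(x,\sqrt{t})}\,|\omega(y)|_y\,d\mu(y).
\end{equation*}
Apply H\"{o}lder's inequality with exponents $p$ and $q=p/(p-1)$ (the endpoint $p=1$ is treated directly by $L^\infty$--$L^1$ duality). Using a standard consequence of \eqref{d}, namely $\int_M e^{-q d^2(x,y)/(Ct)}\,d\mu(y)\lesssim V(x,\sqrt{t})$, one obtains
\begin{equation*}
|\omega(x)|_x\lesssim V(x,\sqrt{t})^{-1/p}\,\|\omega\|_p.
\end{equation*}

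Finally, because $M$ is complete, non-compact and connected, \eqref{d} implies the reverse doubling estimate \eqref{rnu}, which in particular forces $V(x,\sqrt{t})\to+\infty$ as $t\to+\infty$ for every $x$. Letting $t\to+\infty$ in the previous bound yields $\omega(x)=0$ for almost every $x$, hence $\omega=0$. The only delicate point is the semigroup identity $e^{-t\mathcal{L}}\omega=\omega$ for $\omega\in L^p\setminus L^2$, which is not entirely automatic for $p\neq 2$; the argument above handles it by combining the a priori regularity of $\omega$ with the $L^p$-boundedness provided by \eqref{UEL}, so no further spectral hypotheses are required.
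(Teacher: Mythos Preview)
Your proof is correct and follows essentially the same route as the paper: deduce uniform $L^p$-boundedness of $e^{-t\mathcal{L}}$ from \eqref{UEL}, use $\mathcal{L}\omega=0$ to get $e^{-t\mathcal{L}}\omega=\omega$, then apply the kernel bound pointwise and let $t\to\infty$ using that $V(x,\sqrt{t})\to\infty$. The paper's version is slightly terser (it invokes $\omega\in\mathcal{D}_{L^p}(\mathcal{L})$ and differentiates $t\mapsto e^{-t\mathcal{L}}\omega$ directly), while you spell out the H\"older step and the reverse-doubling volume growth; your caveat about the semigroup identity for $p\neq 2$ is exactly the point the paper absorbs into the phrase ``in $L^p$''.
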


\begin{proof}

Assume \eqref{UEL}. As a consequence, $e^{-t\mathcal{L}}$ is uniformly bounded on $L^p$, for all $p\in [1,\infty]$. Fix $p\in [1,\infty)$, and assume $\omega\in {\mathcal D}_{L^p}({\mathcal L})$ satisfies $\mathcal{L}\omega=0$. Then

$$\frac{d}{dt}\left(e^{-t\mathcal{L}}\omega\right)=0,$$
in $L^p$, which implies that for every $t\geq0$,

$$e^{-t\mathcal{L}}\omega=\omega.$$
Let $x\in M$. By \eqref{UEL},

$$|\omega(x)|=|e^{-t\mathcal{L}}\omega(x)|\leq \frac{C}{V^{1/p}(x,\sqrt{t})}||\omega||_p.$$
 
Letting $t\to +\infty$, one concludes that $\omega(x)=0$. Since $x$ is arbitrary, one gets $\omega\equiv0$. This shows that $\mathrm{Ker}_{L^p}(\mathcal{L})=\{0\}$. 
\end{proof}
The implications from iii) to ii) and from ii) to  i)  will rely in a crucial way on a decomposition of the operator $\mathcal{L}$:
we will consider  $\mathcal{L}$  as the rough Laplace operator $\bar{\Delta}$, plus $\mathcal{R}_+$, minus $\mathcal{R}_-$ outside a compact set (which is small in some sense thanks to condition $(K)$) perturbed by a compactly supported part of $\mathcal{R}_-$.

More precisely, let  $K_0$ be given by condition $(K)$. Let $\mathcal{W}_0$ and $\mathcal{W}_\infty$ be the sections of the vector bundle $\mathrm{End}(E)$ respectively given by $$x\to \mathcal{W}_0(x)=\mathbf{1}_{K_0}(x)\mathcal{R}_-(x)$$
and
$$x\to \mathcal{W}_\infty(x)=\mathbf{1}_{M\setminus {K_0}}\mathcal{R}_-(x).$$  We shall also denote by $\mathcal{W}_0$  and $\mathcal{W}_\infty$ the associated operators on sections of $E$.  Set
\begin{equation}\label{defh}
\mathcal{H}=\bar{\Delta}+\mathcal{R}_+-\mathcal{W}_\infty,
\end{equation}
so that 
$$\mathcal{L}=\mathcal{H}-\mathcal{W}_0.$$
That is, $\mathcal{L}$ can be seen as $\mathcal{H}$, perturbed by the compactly supported $\mathcal{W}_0$. We will sometimes write $\mathcal{W}_0^{1/2}$: by that, we mean that at every point $x$ in $M$, we take the square root of the non-negative, self-adjoint endomorphism $\mathcal{W}_0(x)$ of $E_x$, the fiber at $x$. Let us now detail the ideas of the proof of iii)$\Rightarrow$ ii) and ii)$\Rightarrow$ i) of Theorem \ref{main}. Here and in the rest of the paper,  $\|T\|_{p\to q}$, $1\le p,q\le +\infty$, will denote the norm of a linear operator $T$ from $L^p(E,\mu)$ to
 $L^q(E,\mu)$, $E$ being a vector bundle on $M$. 


\begin{itemize}

\item The implication ii)$\Rightarrow$ i) entails five steps:

\begin{enumerate}

\item By domination theory and condition $(K)$,  one  transfers Gaussian estimates from $(e^{-t\Delta})_{t>0}$  to $(e^{-t\mathcal{H}})_{t>0}$ (Proposition \ref{goh}). It follows by \cite{BCS} that
$$\sup_{t>0}||(I+t\mathcal{H})^{-1}V_{\sqrt{t}}^{1/p}||_{p\to\infty}<+\infty,$$
for all $p>\nu/2$. 

\item By a general functional analytic statement (Theorem \ref{blabla}) in the spirit of \cite{BCS}, one sees that the same resolvent estimates for $\mathcal{L}$ instead of $\mathcal{H}$ would yield Gaussian estimates for $(e^{-t\mathcal{L}})_{t>0}$, i.e. i).

\item Thanks to the perturbation formula

$$
(1+t\mathcal{L})^{-1}=(I-(1+t\mathcal{H})^{-1}t\mathcal{W}_0)^{-1}(1+t\mathcal{H})^{-1},
$$
one passes from the above resolvent  estimates on $\mathcal{H}$ to the desired resolvent  estimates on $\mathcal{L}$  
as soon as
$$\sup_{\lambda>0}||(I-(\mathcal{H}+\lambda)^{-1}\mathcal{W}_0)^{-1}||_{\infty\to\infty}<+\infty.$$
The latter fact is obtained in Proposition \ref{key}, whose proof consists in the following two steps (4) and (5).

\item By non-parabolicity of $M$, one deduces that $\mathcal{A}_0=\mathcal{W}_0^{1/2}\mathcal{H}^{-1}\mathcal{W}_0^{1/2}$  is compact on $L^2$, and under ii) $\mathcal{A}_0$ has spectral radius strictly less than one (see Lemma \ref{L4}).

\item Consider the operators
 $$\mathcal{B}_\lambda=(\mathcal{H}+\lambda)^{-1}\mathcal{W}_0,\quad \lambda\geq0.$$
Then, for all $\lambda\geq 0$, $\mathcal{B}_\lambda$ acting on $L^\infty$  is compact  (Lemma \ref{L1}) and its spectrum is related to the spectrum of $\mathcal{A}_0$. In particular, $\mathcal{B}_\lambda$ has spectral radius strictly less that one (Lemma \ref{L5}). By a Neumann series argument, this implies that $(I-(\mathcal{H}+\lambda)^{-1}\mathcal{W}_0)^{-1}$ is a  well-defined, uniformly bounded operator on $L^\infty$, which finishes the proof.

\end{enumerate}

\item For the implication iii)$\Rightarrow$ ii),  one has three steps: 

\begin{enumerate}

\item The first one is that, under \eqref{vol2} and non-parabolicity of $M$,  $\mathrm{Ker}_{L^p}(\mathcal{L})=\{0\}$ implies $\mathrm{Ker}_{H_0^1}(\mathcal{L})=\{0\}$, where $H_0^1$ is a natural Sobolev space associated to the quadratic form of $\mathcal{H}$. This is Lemma \ref{L7}.

\item The second one is the fact that if $M$ is non-parabolic, then $\mathrm{Ker}_{H_0^1}(\mathcal{L})=\{0\}$ is equivalent to the compact operator 
$\mathcal{A}_0$ having $L^2$ norm smaller than one
(see Lemma  \ref{L4}).

\item The estimate   $\|\mathcal{A}_0\|_{2\to 2}< 1$ discussed in point (2)  is equivalent to the strong subcriticality of $\mathcal{L}$, that is condition ii).

\end{enumerate}

\end{itemize}

\subsection{Plan of the paper}

In Section \ref{resemi}, we  present a general functional analytic statement in the spirit of \cite{BCS}, Theorem \ref{blabla}, which yields Gaussian estimates from resolvent estimates for a general semigroup. 

In Section 3, we study $\mathcal{L}$-harmonic sections. 

In Section 4, we prove the $L^p\to L^\infty$ estimates of the resolvent of $\mathcal{L}$ that are needed for the proof of Theorem \ref{main}.

In Section 5, we put the results of Sections 2, 3 and 4 together to conclude the proof of Theorem \ref{main}.

In Section 6, we prove some related results and consequences of Theorem \ref{main}, namely Theorem \ref{thm_para} and Corollary \ref{Riesz_hodge}.

Finally, in Section 7, we prove a consequence of Theorem \ref{main} for the Riesz transform, namely Theorem \ref{Riesz}.


\section{Resolvents and semigroups}\label{resemi}

This section will be devoted to a new version of the main result in \cite{BCS}, which is of independent interest. The new idea with respect to \cite{BCS} is that we iterate instead of extrapolating, which enables us to get rid of the assumption of uniform boundedness of the semigroup on $L^1$. For the heat kernel on functions the latter is given by the Markov property, but  in the case of forms it cannot be taken for
granted.

Let $\mathcal{L}$ be a generalised Schr\"odinger operator acting on sections of $E$. We recall some notation that has been introduced in \cite{BCS}: for $1\le p\le q\le \infty$ and $\gamma\in\R$, we consider
 
\begin{equation}\tag{$V\!RV_{p,q,\gamma}$}
\label{VRV}\sup_{t>0} \|V_{\sqrt{t}}^\gamma(I+t\mathcal{L})^{-1}V_{\sqrt{t}}^\delta  \|_{p\to q}  < +\infty
\end{equation}
and

 \begin{equation}\label{VEV}\tag{$V\!EV_{p,q,\gamma}$}
\sup_{t>0}\|V_{\sqrt{t}}^\gamma e^{-t\mathcal{L}}V_{\sqrt{t}}^\delta\|_{p\to q}<+\infty,
\end{equation}
where, in both estimates, $\delta$ is defined by $\gamma+\delta=\frac{1}{p}-\frac{1}{q}$. It follows from \cite[Corollary 2.1.7]{BCS} that if $e^{-t\mathcal{L}}$ has Gaussian estimates \eqref{UEL}, then both \eqref{VRV} and \eqref{VEV} hold for all admissible values of the parameters. Using an extrapolation argument, \cite[Theorem 1.2.1]{BCS} provides a converse to that: it asserts that if \eqref{VEV} (resp. \eqref{VRV}) are satisfied for $p=2$ and {\em some} $q>2,\gamma\in\R$ (resp. $q>2,\gamma\in R$ with $\frac{2q}{q-2}>\nu$), and if moreover $e^{-t\mathcal{L}}$ is {\em uniformly bounded} on $L^1$, then \eqref{UEL} holds. However, as we have already pointed out,  in our setting, due to the presence of $\mathcal{R}_-$ it cannot be assumed {\em a priori} that $e^{-t\mathcal{L}}$ is uniformly bounded on $L^1$. Nonetheless, we are able to overcome this difficulty by using an iterative argument (instead of the usual extrapolation argument), which first appeared in \cite{D2}. 

Note that in the following proof, we are going to use several results from \cite[Section 4]{BCS}.  These statements are formally written for semigroups acting on functions. But the extension to self-adjoint semigroups acting on vector bundles is straightforward. The following statement  could easily be formulated in a more abstract setting, namely vector bundles on doubling metric measure spaces  endowed with operators satisfying the Davies-Gaffney estimates. We leave this to the reader and stick to the Riemannian case.

\begin{Thm}\label{blabla} Let $M$ be a complete Riemannian manifold  satisfying \eqref{d},  $E$ a vector bundle on $M$, and  $\mathcal{L}$  a generalised  Schr\"odinger operator on $E$. Let $1\le \hat{p}<+\infty$.
If
\begin{equation}\tag{$RV_{{p},\infty}$}
\label{defres}\sup_{t>0} \|(I+t\mathcal{L})^{-1}V_{\sqrt{t}}^{1/{p}}  \|_{{p}\to \infty}  < +\infty,
\end{equation}
for all $\hat{p}\leq p<+\infty$, then  \eqref{UEL} holds.
\end{Thm}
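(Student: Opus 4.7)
By the theorem of Sikora cited in the introduction (Davies--Gaffney together with $(DUE_{\mathcal{L}})$ implies $(UE_{\mathcal{L}})$), and since $\mathcal{L}$ satisfies Davies--Gaffney (proved in the Appendix), it suffices to establish $(DUE_{\mathcal{L}})$. By standard $L^2$ heat kernel theory and the semigroup property, $(DUE_{\mathcal{L}})$ is equivalent to
\begin{equation*}
\sup_{t>0}\bigl\|V_{\sqrt{t}}^{1/2}\,e^{-t\mathcal{L}}\bigr\|_{2\to\infty}<\infty,
\end{equation*}
i.e. to $(VEV_{2,\infty,1/2})$ in the notation of this section. The plan is to deduce this semigroup bound from the hypothesized family of resolvent estimates via a double iteration: first convert resolvent bounds into semigroup bounds at every available exponent $p\geq\hat{p}$, and then descend to $p=2$ by composition with dual estimates.

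\emph{From resolvent to semigroup.} For each $p\geq\hat{p}$, I would aim to show a weighted semigroup estimate $(VEV_{p,\infty,\gamma_p})$ for a suitable $\gamma_p$. The starting point is the strong-operator identity $e^{-t\mathcal{L}}=\lim_{N\to\infty}(I+(t/N)\mathcal{L})^{-N}$, available from the spectral theorem for non-negative self-adjoint $\mathcal{L}$. Combining the $L^2$-contractivity $\|(I+s\mathcal{L})^{-1}\|_{2\to 2}\leq 1$ with the hypothesis $\|(I+s\mathcal{L})^{-1}V_{\sqrt{s}}^{1/p}\|_{p\to\infty}\leq C_p$, Riesz--Thorin interpolation between these two endpoints yields weighted intermediate bounds $\|(I+s\mathcal{L})^{-1}V_{\sqrt{s}}^{\alpha(\theta)}\|_{p_\theta\to q_\theta}$. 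Composing $N$ such resolvents at scale $t/N$, using $(VD)$ to compare $V_{\sqrt{t/N}}$ and $V_{\sqrt{t}}$, and passing to the strong limit $N\to\infty$ produces the desired $(VEV_{p,\infty,\gamma_p})$. This parallels the framework of \cite[Section~4]{BCS}, but now applied at the weighted $L^p\to L^\infty$ endpoint rather than at $L^2\to L^q$.

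\emph{Iteration to descend in $p$.} Self-adjointness of $\mathcal{L}$ turns every $(VEV_{p,\infty,\gamma_p})$ into a dual $(VEV_{1,p',\gamma_p'})$. Writing $e^{-t\mathcal{L}}=e^{-t\mathcal{L}/2}\circ e^{-t\mathcal{L}/2}$ and chaining a dual estimate $L^1\to L^{p'}$ with a direct one $L^{p''}\to L^\infty$---bridging $L^{p'}$ and $L^{p''}$ by interpolation with the $L^2$-contraction, and, if $p''>p'$, by additional compositions with intermediate-exponent estimates drawn from the family $\{(VEV_{p,\infty,\gamma_p})\}_{p\geq\hat{p}}$ and their duals---I would reach $(VEV_{1,\infty,1})$, equivalent via self-adjointness and the semigroup identity to $(VEV_{2,\infty,1/2})$, hence to $(DUE_{\mathcal{L}})$. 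The fact that $(RV_{p,\infty})$ is available for the entire range $p\geq\hat{p}$, not just at one exponent, is precisely what allows this chaining: it removes the need for the uniform $L^1$-boundedness hypothesis used in \cite[Theorem 1.2.1]{BCS}.

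The main obstacle is the careful bookkeeping of the volume weight $V_{\sqrt{s}}^\alpha$ through the iteration. Each composition changes the scale $s$, and the doubling property $(VD)$ must be invoked to align $V_{\sqrt{s_1}}^\alpha$ with $V_{\sqrt{s_2}}^\alpha$ at comparable but distinct scales, so that the weight exponents telescope to the single target weight $V_{\sqrt{t}}^{1/2}$. Beyond this, the conceptual novelty over \cite[Theorem 1.2.1]{BCS} is that the argument never invokes uniform $L^1$-boundedness of $e^{-t\mathcal{L}}$---which is unavailable here because of $\mathcal{R}_-$---the iterative scheme remaining confined to the range $p\in[\hat{p},\infty]\cup[1,\hat{p}']$ and passing through $L^2$ only via contractivity and self-adjointness.
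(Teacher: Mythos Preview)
Your overall architecture is right: reduce to $(DUE_{\mathcal{L}})$, i.e.\ to the estimate $\sup_{t>0}\|V_{\sqrt t}^{1/2}e^{-t\mathcal L}\|_{2\to\infty}<\infty$, and reach it by an iteration that replaces the missing $L^1$-boundedness. But the ``resolvent to semigroup'' step as you describe it does not go through, and this is exactly where the paper's proof uses an idea you are missing.

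The problem is the bookkeeping you flag at the end. To compose $N$ copies of $(I+s\mathcal L)^{-1}$ (or the interpolated versions) as maps $L^{p_k}\to L^{p_{k+1}}$, you need the volume weights on adjacent factors to cancel: the right weight $V_{\sqrt s}^{\delta_k}$ of one factor must match the left weight $V_{\sqrt s}^{-\gamma_{k+1}}$ of the next. The specific $(\gamma,\delta)$ that interpolation between $(RV_{p,\infty})$, its dual, and $L^2$-contractivity produces do \emph{not} telescope in general; to rearrange them you would need to commute $V_{\sqrt s}^{\alpha}$ through $(I+s\mathcal L)^{-1}$, and the resolvent has no finite propagation speed, so the commutation lemmas of \cite[Prop.~4.1.1, Lemma~4.1.3]{BCS} are unavailable. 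Moreover, even if you could arrange the weights, the Euler-type limit $e^{-t\mathcal L}=\lim_N(I+(t/N)\mathcal L)^{-N}$ forces you to compare $V_{\sqrt{t/N}}$ with $V_{\sqrt t}$; doubling gives a factor $N^{\nu/2}$ per resolvent, and after $N$ factors this blows up.

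The paper avoids both issues by iterating not on bare resolvents but on operators of the form $(I+t\mathcal L)^{n-k}\Phi(\sqrt{t\mathcal L})$ with $\widehat\Phi$ compactly supported. Since $(1+\lambda^2)^{n-k}\Phi(\lambda)$ still has compactly supported Fourier transform, these operators have finite propagation at scale $\sqrt t$, so \cite[Prop.~4.1.1, Lemma~4.1.3]{BCS} \emph{do} allow the volume weights to be shifted freely at each step. One then runs a finite induction along $2=p_1<\cdots<p_n=\infty$ (with $\frac{1}{p_k}-\frac{1}{p_{k+1}}=\frac{1}{\bar p}$ for a fixed $\bar p\ge\hat p$), peeling off one resolvent factor per step, to obtain $\sup_{t>0}\|\Phi(\sqrt{t\mathcal L})V_{\sqrt t}^{1/2}\|_{2\to\infty}<\infty$. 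The passage to $e^{-t\mathcal L}$ is done only at the very end, via the explicit representation $e^{-t\mathcal L}=\int_0^\infty s^{a+1/2}e^{-s/4}F_a(\sqrt{st\mathcal L})\,ds$ with $F_a$ the Fourier transform of $(1-t^2)_+^a$. No limit $N\to\infty$ of resolvent powers is ever taken.
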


\noindent{\em Proof: } 
Choose  ${\bar{p}}=2(n-1) \geq \hat{p} $, when $n\geq 2$ is an integer. By assumption 
$(RV_{{{\bar{p}}},\infty})$ holds.
By  duality 
\begin{equation*}\tag{$V\!R_{1,\bar{p}'}$}
\sup_{t>0} \|V_{\sqrt{t}}^{1/{\bar{p}}} (I+t\mathcal{L})^{-1}  \|_{1\to {\bar{p}}' }  < +\infty,
\end{equation*}
and by complex interpolation for the family of operators 

$$T_z=V_{\sqrt{t}}^{\frac{z}{\bar{p}}+(1-z)}e^{-t\mathcal{L}}V_{\sqrt{t}}^{-z+\frac{1-z}{p}},\qquad 0\leq \Re(z)\leq 1,$$ 
one obtains (see also \cite[Proposition 2.1.5]{BCS})

\begin{equation}\tag{$V\!RV_{p,q,\gamma}$}
\label{stepi}\sup_{t>0} \|V_{\sqrt{t}}^\gamma(I+t\mathcal{L})^{-1}V_{\sqrt{t}}^\delta  \|_{p\to q}  < +\infty
\end{equation}
 for any $ p, q$ such that $1\leq  p\leq {\bar{p}}$, ${\bar{p}'} \leq q\leq+\infty$, $\frac{1}{ p}-\frac{1}{ q}=\gamma+\delta=\frac{1}{{\bar{p}}}$, and $\gamma=\frac{1}{({\bar{p}}-1) q}$.

Next
consider the finite sequence $2=p_1<p_2< \ldots< p_n=\infty$ such that 
$\frac{1}{p_k}-\frac{1}{p_{k+1}}= \frac{1}{{\bar{p}}}$, in others words $\frac{1}{p_k}=\frac{1}{2}- \frac{k-1}{{\bar{p}}}$,  for $k\in \{1,2,\ldots, n\}$.
We can choose $p=p_k$, $q=p_{k+1}$ and $\gamma=\gamma_k:=\frac{1}{{\bar{p}}-1}\left(\frac{1}{2}-\frac{k}{ {\bar{p}}}\right)$ in \eqref{stepi}, which yields 
\begin{equation}
\sup_{t>0}\|V_{\sqrt{t}}^{\gamma_k}(I+t\mathcal{L})^{-1}V_{\sqrt{t}}^{\delta_k} \|_{p_k\to p_{k+1}}<+\infty,\label{Dv}
\end{equation}
with  $\gamma_k+\delta_k=\frac{1}{\bar{p}}$,
for all $k\in \{1,\ldots n-1\}$.

Let the function $\Phi$ on $\R$ be even  and satisfy 
\begin{equation}
\sup_{\lambda\in \R} |(1+\lambda^2)^{n} \Phi(\lambda)| <  +\infty\label{sup}
\end{equation}
as well as $\mbox{ supp}\, \widehat\Phi \subset [-1,1] $.
 
We shall prove inductively that 
\begin{equation}
\label{iter}\sup_{t>0}\|(I+t\mathcal{L})^{n-k}\Phi\left(\sqrt {t\mathcal{L}}\right)V_{\sqrt{t}}^{\gamma_k'} \|_{2\to p_{k}}<+\infty,\tag{$I_k$}
\end{equation}
for  $k\in \{1,\ldots n\}$, where $\gamma_k'=1/2-1/p_{k}=\frac{k-1}{{\bar{p}}}$.

Note first that  $(I_1)$, that is
\begin{equation*}
\sup_{t>0}\|(I+t\mathcal{L})^{n-1}\Phi\left(\sqrt {t\mathcal{L}}\right) \|_{2\to 2}<+\infty,
\end{equation*}
 follows from \eqref{sup}  and the spectral theorem. 

Assume now $(I_k)$ for $k\in \{1,2,\ldots, n-1\}$.
By \cite[Proposition 4.1.1 and Lemma 4.1.3]{BCS}, 
\begin{eqnarray*}
&&\|(I+t\mathcal{L})^{n-(k+1)}\Phi(\sqrt {t\mathcal{L}})V_{\sqrt{t}}^{\gamma_{k+1}'} \|_{2\to p_{k+1}}\\&\le& \|V_{\sqrt{t}}^{\gamma_k}(I+t\mathcal{L})^{n-(k+1)}\Phi(\sqrt {t\mathcal{L}})V_{\sqrt{t}}^{\gamma'_{k+1}-\gamma_{k}} \|_{2\to p_{k+1}}.
\end{eqnarray*}

Write
\begin{eqnarray*}
&&\|V_{\sqrt{t}}^{\gamma_k}(I+t\mathcal{L})^{n-(k+1)}\Phi(\sqrt {t\mathcal{L}})V_{\sqrt{t}}^{\gamma'_{k+1}-\gamma_{k}} \|_{2\to p_{k+1}}\\
&\leq& \|V_{\sqrt{t}}^{\gamma_k}(I+t\mathcal{L})^{-1}V_{\sqrt{t}}^{\delta_k} \|_{p_k\to p_{k+1}} \|V_{\sqrt{t}}^{-\delta_k}(I+t\mathcal{L})^{n-k}\Phi(\sqrt {t\mathcal{L}})V_{\sqrt{t}}^{\gamma'_{k+1}-\gamma_{k}} \|_{2\to p_{k}}.
\end{eqnarray*}

Again by \cite[Proposition 4.1.1 and Lemma 4.1.3]{BCS} ,
\begin{eqnarray*}&&\|V_{\sqrt{t}}^{-\delta_k}(I+t\mathcal{L})^{n-k}\Phi(\sqrt {t\mathcal{L}})V_{\sqrt{t}}^{\gamma'_{k+1}-\gamma_{k}} \|_{2\to p_{k}}\\&\le& \|(I+t\mathcal{L})^{n-k}\Phi(\sqrt {t\mathcal{L}})V_{\sqrt{t}}^{\gamma'_{k+1}-\gamma_{k}-\delta_k}\|_{2\to p_{k}}\\&=& \|(I+t\mathcal{L})^{n-k}\Phi(\sqrt {t\mathcal{L}})V_{\sqrt{t}}^{\gamma'_{k+1}-\frac{1}{\bar{p}}}\|_{2\to p_{k}}\\&=& \|(I+t\mathcal{L})^{n-k}\Phi(\sqrt {t\mathcal{L}})V_{\sqrt{t}}^{\gamma'_{k}}\|_{2\to p_{k}},
\end{eqnarray*}
hence, by \eqref{Dv} and $(I_k)$, $(I_{k+1})$ follows.
One has therefore $(I_n)$, that is

$$\sup_{t>0}\|\Phi(\sqrt {t\mathcal{L}})V_{\sqrt{t}}^{1/2} \|_{2\to \infty} <  +\infty.$$
Applying again \cite[Proposition 4.1.1 and Lemma 4.1.3]{BCS}, we obtain

\begin{equation}\label{2-inf}
\sup_{t>0}\|V_{\sqrt{t}}^{1/2}\Phi(\sqrt {t\mathcal{L}}) \|_{2\to \infty} <  +\infty.
\end{equation}
Next we consider the function $\Phi=F_a$ defined as the Fourier transform  of the function $t \to (1-{t^2})^a_+$ as in \cite[Proposition 4.1.6]{BCS}. As in the proof of \cite[Proposition 4.1.6]{BCS}, write, by spectral theory,
$$
\int_0^{+\infty} s^{a+\frac{1}{2}}e^{-s/4}F_a(\sqrt{st\mathcal{L}}) \, ds=e^{-t\mathcal{L}}.
$$
Thus
\begin{eqnarray*}
 &&\|V_{\sqrt{t}}^{1/2}  e^{-t\mathcal{L}} \|_{2\to \infty}\le \int_0^{+\infty} s^{a+\frac{1}{2}}e^{-s/4}\|V_{\sqrt{t}}^{1/2}   F_a(\sqrt{ts\mathcal{L}})    \|_{2\to \infty} \,ds\\
 &\le &  C\int_0^{+\infty} s^{a+\frac{1}{2}}e^{-s/4}\Big(1+\frac{1}{\sqrt{s}}\Big)^{\kappa_v/2}\|V_{\sqrt{st}}^{1/2}   F_a(\sqrt{ts\mathcal{L}})    \|_{2\to \infty} \,ds,
  \end{eqnarray*}
  hence, for $a$ large enough,
  $$
 \sup_{t>0} \|{V_{\sqrt{t}}^{1/2}}   e^{-t\mathcal{L}}\|_{2\to \infty}
 \le C'\sup_{t>0}\|V_{\sqrt{t}}^{1/2}   F_a(\sqrt{t\mathcal{L}})     \|_{2\to \infty},$$
 which is finite by \eqref{2-inf}. Hence
$$
\sup_{t>0}\|V_{\sqrt{t}}^{{1/2}}\exp(-t\mathcal{\mathcal{L}})\|_{2\to \infty}<+\infty,
$$
 that is, by \cite[Corollary 2.1.2]{BCS}, \eqref{DUEL}. The fact that \eqref{UEL} holds now follows from Theorem \ref{off-diago} in the Appendix.

\cqfd
Actually, an adaptation of the proof of Theorem \ref{blabla} allows us to prove the following more general result:

\begin{Thm}\label{thm_VEV}

Let $M$ be a complete Riemannian manifold satisfying \eqref{d}, and $E$ a vector bundle over $M$. Let $\mathcal{L}$ be a generalised Schr\"{o}dinger operator acting on the sections of $E$, and assume that

\begin{equation}\tag{$RV_{p,q}$}
\label{stepq}\sup_{t>0} \|(I+t\mathcal{L})^{-1}V_{\sqrt{t}}^{\frac{1}{p}-\frac{1}{q}} \|_{p\to q}  < +\infty
\end{equation}
for some  $2\le p\leq q<+\infty$. Then $(V\!EV_{p_1,q_1,\gamma})$ holds for all $\gamma\in\R$, $q'\leq p_1<q_1\le q$.

\end{Thm}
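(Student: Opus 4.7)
The proof parallels that of Theorem \ref{blabla}, with the single pair $(p,q)$ of finite exponents playing the role of $(\bar p,\infty)$. Set $a:=\frac{1}{p}-\frac{1}{q}$. Self-adjointness of $\mathcal{L}$ and duality applied to $(RV_{p,q})$ yield
\[
\sup_{t>0}\|V_{\sqrt{t}}^a(I+t\mathcal{L})^{-1}\|_{q'\to p'}<+\infty,
\]
and Stein's complex interpolation on the family $T_z=V_{\sqrt{t}}^{za}(I+t\mathcal{L})^{-1}V_{\sqrt{t}}^{(1-z)a}$ produces $(VRV_{r_\theta,s_\theta,\theta a})$ for every $\theta\in[0,1]$, where $\frac{1}{r_\theta}=\frac{1-\theta}{p}+\frac{\theta}{q'}$ and $\frac{1}{s_\theta}=\frac{1-\theta}{q}+\frac{\theta}{p'}$. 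A further interpolation with the spectral-theorem bound $\|(I+t\mathcal{L})^{-1}\|_{2\to 2}\le 1$ supplies weighted resolvent bounds of any prescribed jump $b\in[0,a]$ on suitable pairs.

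Next, fix $q_1\in(2,q]$ and a chain $2=\pi_0<\pi_1<\cdots<\pi_m=q_1$ whose jumps $b_k:=\frac{1}{\pi_{k-1}}-\frac{1}{\pi_k}$ all lie in $(0,a]$, each covered by a weighted resolvent bound from the previous step (for instance $b_1=\cdots=b_{m-1}=a$, with $b_m\in(0,a]$ absorbing the remainder). Take $\Phi$ even with $\mathrm{supp}\,\widehat\Phi\subset[-1,1]$ and $\sup_\lambda(1+\lambda^2)^m|\Phi(\lambda)|<+\infty$. Induction on $k$, mimicking the proof of Theorem \ref{blabla} and using \cite[Proposition 4.1.1 and Lemma 4.1.3]{BCS} to shuffle the volume factors at each step, yields
\[
\sup_{t>0}\bigl\|(I+t\mathcal{L})^{m-k}\Phi(\sqrt{t\mathcal{L}})V_{\sqrt{t}}^{1/2-1/\pi_k}\bigr\|_{2\to\pi_k}<+\infty,\qquad k=0,\ldots,m,
\]
the base case $k=0$ following from the spectral theorem.

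Specializing $\Phi=F_a$ for $a$ large enough and using the representation $\int_0^{+\infty}s^{a+\frac{1}{2}}e^{-s/4}F_a(\sqrt{st\mathcal{L}})\,ds=c\,e^{-t\mathcal{L}}$ from \cite[Proposition 4.1.6]{BCS}, the case $k=m$ converts into
\[
\sup_{t>0}\bigl\|V_{\sqrt{t}}^{1/2-1/q_1}e^{-t\mathcal{L}}\bigr\|_{2\to q_1}<+\infty,
\]
i.e., $(VEV_{2,q_1,1/2-1/q_1})$. Duality gives $(VEV_{q_1',2,0})$, and the semigroup identity $e^{-t\mathcal{L}}=e^{-t\mathcal{L}/2}\circ e^{-t\mathcal{L}/2}$, combined with doubling (to replace $V_{\sqrt{t/2}}$ by $V_{\sqrt{t}}$), produces $(VEV_{q_1',q_1,1/2-1/q_1})$. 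Complex interpolation among these three estimates, together with the trivial $\|e^{-t\mathcal{L}}\|_{2\to 2}\le 1$, covers pairs $(p_1,q_1)$ with $q'\le p_1<q_1\le q$ for at least one specific weight exponent $\gamma$.

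The remaining task, extending to all $\gamma\in\R$, is the main technical hurdle. It is handled by invoking the Davies-Gaffney off-diagonal estimates for $\mathcal{L}$ (Theorem \ref{off-diago}) together with the doubling property, along the lines of \cite[Proposition 2.1.5 and Lemma 4.1.3]{BCS}, which allow one to freely redistribute the volume factors $V_{\sqrt{t}}^\gamma$ between the two sides of $e^{-t\mathcal{L}}$ without altering the $L^{p_1}\to L^{q_1}$ estimate.
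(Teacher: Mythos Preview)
Your argument follows the paper's route closely up through the iterative step and the passage from $\Phi(\sqrt{t\mathcal{L}})$ to $e^{-t\mathcal{L}}$. The difficulty is in your final paragraph. You claim that Davies--Gaffney for $e^{-t\mathcal{L}}$, together with \cite[Proposition 2.1.5 and Lemma 4.1.3]{BCS}, lets you ``freely redistribute the volume factors $V_{\sqrt t}^\gamma$ between the two sides of $e^{-t\mathcal{L}}$''. But Lemma~4.1.3 of \cite{BCS} (and the companion Proposition~4.1.1) applies to operators of the form $\Phi(\sqrt{t\mathcal{L}})$ with $\widehat\Phi$ compactly supported, precisely because such operators have \emph{finite} propagation speed, so that their kernel is supported where $V(x,\sqrt t)\simeq V(y,\sqrt t)$ by doubling. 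The semigroup $e^{-t\mathcal{L}}$ has no such localisation; Davies--Gaffney only gives $L^2\!\to\! L^2$ off-diagonal decay, which does not by itself let you move $V_{\sqrt t}^\gamma$ across an $L^{p_1}\!\to\! L^{q_1}$ estimate. So this step is not justified as written.

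The repair is a simple reordering, and it is exactly what the paper does: perform the weight-shifting \emph{before} you integrate. At the end of your induction you have
\[
\sup_{t>0}\bigl\|\Phi(\sqrt{t\mathcal{L}})\,V_{\sqrt t}^{1/2-1/q_1}\bigr\|_{2\to q_1}<+\infty
\]
for $\Phi$ with $\mathrm{supp}\,\widehat\Phi\subset[-1,1]$. Since $\Phi(\sqrt{t\mathcal{L}})$ has finite speed, \cite[Proposition 4.1.1 and Lemma 4.1.3]{BCS} \emph{do} apply here and give the analogous bound with any splitting $V_{\sqrt t}^\gamma\cdots V_{\sqrt t}^{\delta}$, $\gamma+\delta=\tfrac12-\tfrac1{q_1}$. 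Specialising to $\Phi=F_a$ and integrating as in your argument then yields $(V\!EV_{2,q_1,\gamma})$ for \emph{every} $\gamma\in\R$. After that, duality and interpolation carry the ``all $\gamma$'' through to all pairs $q'\le p_1<q_1\le q$, and your problematic final paragraph becomes unnecessary.
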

Notice that, contrary to Theorem \ref{blabla}, \eqref{stepq} is assumed to hold only for {\em one} particular $p$, and not for all $\tilde{p}\le p<\infty$. The result of Theorem \ref{thm_VEV} might be useful for semigroups that do not act on the full range of $L^p$ spaces.

\begin{proof}

The proof uses similar ideas to those used in the proof of Theorem \ref{blabla}. We only indicate what has to be changed. Interpolating ($RV_{p,q}$) with ($V\!RV_{2,2,0}$) (which holds by the spectral theorem), one obtains ($V\!RV_{r(\theta),s(\theta),\gamma(\theta)}$) for all $\theta\in [0,1]$, where 

$$\frac{1}{r(\theta)}=\frac{1-\theta}{2}+\frac{\theta}{p},\quad\frac{1}{r(\theta)}-\frac{1}{s(\theta)}=\theta\left(\frac{1}{p}-\frac{1}{q}\right).$$
By duality and interpolation, one obtains ($V\!RV_{r,s,\gamma}$) for all $s(\theta)'\leq r\leq r(\theta)$, $\frac{1}{r}-\frac{1}{s}=\theta\left(\frac{1}{p}-\frac{1}{q}\right)$ and some $\gamma\in \R$ (depending on $r$). The iterative argument of the proof of Theorem \ref{blabla} now gives ($V\!EV_{2,r,\gamma}$), for all $\gamma\in\R$, $q'\leq r\le q$. By duality and interpolation, one finally gets $(V\!EV_{p_1,q_1,\gamma})$ for all $\gamma\in\R$, $q'\leq p_1<q_1\le q$.

\end{proof}



Let us now focus on the semigroup $(e^{-t\mathcal{H}})_{t>0}$.


\begin{Pro}\label{goh}
Assume that $M$ satisfies  \eqref{d} and \eqref{due}. 
Assume in addition that   $\mathcal{R}_-$ satisfies condition 
\eqref{condik} and let $\mathcal{H}$ be defined by \eqref{defh}. Then 
the heat kernel of $\mathcal{H}$ satisfies the Gaussian estimates $(U\!E_{\mathcal{H}})$. In particular, $\mathcal{H}$ is non-negative on $L^2$.
\end{Pro}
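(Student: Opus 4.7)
The plan is to reduce from the vector-bundle operator $\mathcal H$ to a \emph{scalar} Schr\"odinger operator via domination, then to exploit the smallness condition $\|\Delta^{-1}W_\infty\|_\infty<1$ to produce a positive bounded solution of $(\Delta-W_\infty)h=0$, and finally to transfer the Gaussian upper bound from the heat kernel of $\Delta$ to that of $\Delta-W_\infty$ via the associated ground-state ($h$-)transform. Writing $W_\infty(x):=\|\mathcal{W}_\infty(x)\|_x$, I would first invoke the weighted Kato--Simon domination inequality recalled in the Appendix to get, for every $\omega\in C_0^\infty(E)$,
$$|e^{-t\mathcal H}\omega|(x)\le e^{-t(\Delta-W_\infty)}|\omega|(x),$$
the non-negative perturbation $\mathcal{R}_+$ only helping and being dropped. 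This reduces the problem to proving $(UE)$ for the scalar semigroup generated by $\Delta - W_\infty$.

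Next, condition \eqref{condik} with $\mathcal{V}=\mathcal{R}_-$ combined with $\mathcal{W}_\infty=\mathbf 1_{M\setminus K_0}\mathcal{R}_-$ yields $\theta:=\|\Delta^{-1}W_\infty\|_{\infty\to\infty}<1$, so that the operator $Tu:=\Delta^{-1}(W_\infty u)$ is a strict contraction on $L^\infty$. The Neumann series $g:=\sum_{k\ge 1}T^k\mathbf 1$ then converges in $L^\infty$ to a non-negative bounded function with $\|g\|_\infty\le \theta/(1-\theta)$, and $h:=1+g$ satisfies $1\le h\le 1/(1-\theta)$ as well as $\Delta h=\Delta g=W_\infty(1+g)=W_\infty h$, i.e.\ $(\Delta-W_\infty)h=0$ in the distribution sense (and smooth by elliptic regularity). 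The ground-state transform then gives, for every $v\in C_0^\infty(M)$,
$$h^{-1}(\Delta-W_\infty)(hv)=\Delta_{h^2\mu}\,v,$$
the (non-negative) weighted Laplacian on $(M,d,h^2\mu)$, and consequently $p^{\Delta-W_\infty}_t(x,y)=h(x)h(y)\tilde p_t(x,y)$, where $\tilde p_t$ is the heat kernel of $\Delta_{h^2\mu}$.

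Because $h\simeq 1$, the measure $h^2\mu$ is comparable to $\mu$, so $(M,d,h^2\mu)$ inherits \eqref{d} and the relative Faber--Krahn inequality (equivalent to $(UE)$ under \eqref{d} by \cite{G2}) from $(M,d,\mu)$; this gives Gaussian upper estimates for $\tilde p_t$, and multiplying by the bounded factor $h(x)h(y)$ yields the same estimate for $p^{\Delta-W_\infty}_t$. Combined with the first step this proves $(U\!E_{\mathcal H})$. For the non-negativity of $\mathcal H$, I would use Kato's inequality $|\nabla\omega|\ge|\nabla|\omega||$ together with the pointwise estimate $\langle\mathcal{W}_\infty\omega,\omega\rangle\le W_\infty|\omega|^2$ to deduce
$$Q_{\mathcal H}(\omega)\ge Q_{\Delta-W_\infty}(|\omega|)=\int_M h^2\bigl|\nabla(|\omega|/h)\bigr|^2\,d\mu\ge 0,$$
the last equality being the Agmon--Allegretto--Piepenbrink identity applied to the positive bounded solution $h$. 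I expect the main obstacle to be the careful bookkeeping for the transfer of the relative Faber--Krahn inequality through the bounded change of measure $\mu\mapsto h^2\mu$, together with the justification of the domination inequality and of the ground-state transform in the weighted, vector-bundle setting.
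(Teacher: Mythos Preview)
Your proposal is correct and takes a genuinely different route from the paper's proof.

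Both arguments begin identically, reducing via domination to the scalar operator $\Delta-W_\infty$ and using condition \eqref{condik} in the form $\|\Delta^{-1}W_\infty\|_{\infty\to\infty}<1$. From there the two diverge. The paper observes that $\|(\Delta+\lambda)^{-1}W_\infty\|_{\infty\to\infty}<1$ uniformly in $\lambda\ge 0$, uses a Neumann series to bound $(I-(I+t\Delta)^{-1}tW_\infty)^{-1}$ on $L^\infty$, plugs this into the resolvent perturbation formula to get $L^p\to L^\infty$ resolvent bounds for $\Delta-W_\infty$, and then invokes Theorem~\ref{blabla}. You instead run the Neumann series at $\lambda=0$ on the single element $\mathbf 1$ to build a positive solution $h\simeq 1$ of $(\Delta-W_\infty)h=0$, perform the Doob $h$-transform to realise $\Delta-W_\infty$ as the weighted Laplacian $\Delta_{h^2\mu}$, transfer \eqref{d} and the relative Faber--Krahn inequality through the bounded change of measure $\mu\mapsto h^2\mu$, and conclude via the Grigor'yan equivalence \cite{G2}. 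Your proof of non-negativity via Kato's inequality plus the ground-state representation is also more direct than the paper's, which deduces non-negativity \emph{a posteriori} from the Gaussian bound by a spectral-projection argument.

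Your approach is more classical and avoids Theorem~\ref{blabla} altogether; the paper's approach has the structural advantage that it is an instance of the very machinery (resolvent bounds $\Rightarrow$ Gaussian bounds) that will be reused for $\mathcal L$ itself in Section~\ref{spectral}, where no bounded positive ground state is available and your $h$-transform trick does not apply. One minor point to watch: since $W_\infty$ is only $L^\infty_{loc}$, elliptic regularity gives $h\in C^{1,\alpha}_{loc}$ rather than $C^\infty$, so the weight $h^2e^f$ is not smooth; the $h$-transform and the Faber--Krahn transfer still go through at the Dirichlet-form level, but you should cite the relevant results in that generality rather than in the smooth-weight framework the paper sets up for $(M,\mu)$.
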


 \noindent{\em Proof: }
By domination theory (see \cite{HSU}),
\begin{equation}\label{dom}
|e^{-t\mathcal{H}}\omega|\leq e^{-t(\Delta-W_\infty)}|\omega|,
\end{equation} 
where $W_\infty(x)=\|\mathcal{W}_\infty(x)\|_x=\mathbf{1}_{M\setminus {K_0}}\|\mathcal{R}_-(x)\|_x$. 
It is therefore enough to prove Gaussian upper estimates for the semigroup $(e^{-t(\Delta-W_\infty)})_{t>0}$. For  $\lambda\geq 0$, 
  $$(\Delta+\lambda)^{-1}(x,y)=\int_0^{+\infty}e^{-\lambda t}p_t(x,y)\,dt \le G(x,y),$$
  hence
  $$
  \sup_{x\in M}\int_{M}(\Delta+\lambda)^{-1}(x,y)W_\infty(y)\,d\mu(y)\le  \sup_{x\in M}\int_{M\setminus {K_0}}G(x,y)\|\mathcal{R}_-(y)\|_y\,d\mu(y).
  $$
Therefore  condition \eqref{condik}
  yields
  $$
  \sup_{x\in M}\int_{M}(\Delta+\lambda)^{-1}(x,y)W_\infty(y)\,d\mu(y)<1,
  $$
  thus, as we already observed,
 $$
\sup_{\lambda\geq 0} \|(\Delta +\lambda)^{-1} W_\infty\|_{\infty \to \infty} <1. 
 $$
 It follows by a standard Neumann series argument that
 \begin{equation}\label{neuneu}
\sup_{t>0} \|(I-(I+t\Delta)^{-1}tW_\infty)^{-1}\|_{\infty \to \infty} <+\infty.
\end{equation}
Now write
 the  perturbation formula
 \begin{equation}\label{peper}
\left(I+t(\Delta-W_\infty)\right)^{-1}=(I-(I+t\Delta)^{-1}tW_\infty)^{-1}(I+t\Delta)^{-1}
\end{equation}
and observe that by  \eqref{due}
\begin{equation}\label{res}
\sup_{t>0} \|(I+t\Delta)^{-1}V_{\sqrt{t}}^{1/p}  \|_{p \to \infty}  < +\infty
\end{equation}
for any $p> \nu/2$ (see \cite{BCS}).
It follows from \eqref{neuneu}, \eqref{peper} and \eqref{res}  that
$$
\sup_{t>0} \|\left(I+t(\Delta-W_\infty)\right)^{-1}V_{\sqrt{t}}^{1/p}  \|_{p \to \infty}  < +\infty.
$$
for any $p> \nu/2$.  Theorem \ref{blabla} then yields Gaussian estimates for $(e^{-t(\Delta-W_\infty)})_{t>0}$, hence for $(e^{-t\mathcal{H}})_{t>0}$.

In particular, the operator $e^{-t\mathcal{H}}$ is uniformly  bounded  with respect to $t>0$ on $L^1$, $L^\infty$ hence on $L^2$. Now, denoting by $E_{\mathcal{H}}(-\varepsilon^{-1},-\varepsilon)$ the projection-valued measure on the part of the spectrum of $\mathcal{H}$ lying in the interval $(-\varepsilon^{-1},-\varepsilon)$, $\varepsilon>0$, one has by the spectral theorem that $||E_{\mathcal{H}}(-\varepsilon^{-1},-\varepsilon)||_{2\to 2}\leq 1$, and assuming that $E_{\mathcal{H}}(-\varepsilon^{-1},-\varepsilon)\neq 0$,
$$||e^{-t\mathcal{H}}E_{\mathcal{H}}(-\varepsilon^{-1},-\varepsilon)||_{2\to 2}\geq e^{\varepsilon t}\to\infty\hbox{ as }t\to\infty.$$
This implies that 

$$||e^{-t\mathcal{H}}||_{2\to 2}\geq e^{\varepsilon t}\to\infty\hbox{ as }t\to\infty,$$
which is a contradiction. Thus, $E_{\mathcal{H}}(-\varepsilon^{-1},-\varepsilon)= 0$, that is, the spectrum of $\mathcal{H}$ does not intersect the interval $(-\varepsilon^{-1},-\varepsilon)$. Since this is true for all $\varepsilon>0$, one concludes that $\mathcal{H}$ is non-negative.

\cqfd

\begin{Rem}

{\em

In fact, using condition \eqref{condik} and a Neumann series argument (see \cite[Lemma 3.5]{D3}), one can show that for $\varepsilon$ small enough, $\Delta-(1+\varepsilon)W_\infty$ is non-negative, which implies that $\Delta-W_\infty$ is subcritical. Proposition \ref{goh} then also follows from 
\cite[Theorem 4.1]{D3}.


}

\end{Rem}
As a straightforward consequence of Proposition \ref{goh} and \cite[Corollary 2.1.7]{BCS}, one obtains:

\begin{Lem}\label{H_op}

Assume that $M$ satisfies  \eqref{d} and \eqref{due}. 
Assume in addition that   $\mathcal{R}_-$ satisfies condition 
\eqref{condik} and let $\mathcal{H}$ be defined by \eqref{defh}. Then we have the following weighted estimates of the heat kernel and of the resolvent of $\mathcal{H}$:
\begin{equation}\label{gaga}\tag{$EV_{p,q}$}
\sup_{t>0} \|e^{-t\mathcal{H}}V_{\sqrt{t}}^\delta  \|_{p\to q}  < +\infty
\end{equation}
 for any $1\leq p\leq q\leq \infty$ and $\delta=\frac{1}{p}-\frac{1}{q}$, and
 
\begin{equation}\label{gogo}\tag{$RV_{p,q}$}
\sup_{t>0} \|(I+t\mathcal{H})^{-1}V_{\sqrt{t}}^\delta  \|_{p\to q}  < +\infty
\end{equation}
for any $1\leq p\leq q\leq + \infty$ such that $\delta=\frac{1}{p}-\frac{1}{q}< \frac{2}{\nu}$.
 
\end{Lem}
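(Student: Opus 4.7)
The entire content of the lemma reduces to translating the full Gaussian upper bound for the semigroup of $\mathcal{H}$, already established in Proposition \ref{goh}, into the two weighted norm estimates. The plan is first to apply \cite[Corollary 2.1.7]{BCS} to get $(EV_{p,q})$, and then to pass from the semigroup to the resolvent by the Laplace transform identity $(I+t\mathcal{H})^{-1}=\int_0^{+\infty}e^{-s}e^{-st\mathcal{H}}\,ds$, using the doubling property to track how $V_{\sqrt{t}}$ compares with $V_{\sqrt{st}}$.

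More precisely, Proposition \ref{goh} provides $(U\!E_{\mathcal{H}})$. Under \eqref{d}, the estimates in \cite[Corollary 2.1.7]{BCS} are stated for scalar self-adjoint semigroups satisfying a Gaussian upper bound, but the proof goes through verbatim for semigroups acting on sections of a Riemannian vector bundle; the key point is only that $\|p_t^{\mathcal{H}}(x,y)\|_{y,x}$ obeys the same pointwise Gaussian bound as a scalar heat kernel, which is exactly $(U\!E_{\mathcal{H}})$. This immediately gives $(EV_{p,q})$ for all $1\le p\le q\le\infty$ with $\delta=\frac{1}{p}-\frac{1}{q}$.

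For $(RV_{p,q})$, I would write
\begin{equation*}
(I+t\mathcal{H})^{-1}V_{\sqrt{t}}^\delta=\int_0^{+\infty}e^{-s}\,e^{-st\mathcal{H}}V_{\sqrt{t}}^\delta\,ds,
\end{equation*}
so that
\begin{equation*}
\|(I+t\mathcal{H})^{-1}V_{\sqrt{t}}^\delta\|_{p\to q}\leq \int_0^{+\infty}e^{-s}\,\|e^{-st\mathcal{H}}V_{\sqrt{st}}^\delta\|_{p\to q}\,\|V_{\sqrt{st}}^{-\delta}V_{\sqrt{t}}^\delta\|_{\infty\to\infty}\,ds.
\end{equation*}
By $(EV_{p,q})$ applied at time $st$, the first norm is bounded uniformly in $s,t$. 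For the second factor, \eqref{d} gives $V(x,\sqrt{t})\lesssim (1\vee s^{-1/2})^\nu V(x,\sqrt{st})$ uniformly in $x$, so $V_{\sqrt{st}}^{-\delta}V_{\sqrt{t}}^{\delta}\lesssim \max(1,s^{-\delta\nu/2})$ pointwise (for $\delta\ge 0$, which is the relevant case). Plugging this in yields
\begin{equation*}
\|(I+t\mathcal{H})^{-1}V_{\sqrt{t}}^\delta\|_{p\to q}\lesssim \int_0^{+\infty}e^{-s}\max(1,s^{-\delta\nu/2})\,ds,
\end{equation*}
which is finite precisely when $\delta\nu/2<1$, i.e. when $\delta<2/\nu$, giving $(RV_{p,q})$ under the stated restriction.

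The only mildly delicate step is the volume-doubling comparison $V_{\sqrt{st}}^{-\delta}V_{\sqrt{t}}^{\delta}\lesssim \max(1,s^{-\delta\nu/2})$; everything else is a routine application of the quoted corollary. I expect no further obstacle because the integrability threshold $\delta<2/\nu$ that emerges naturally from this computation matches exactly the hypothesis of the lemma, which reinforces that this is the intended argument.
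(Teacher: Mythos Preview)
Your proof is correct and follows essentially the same approach as the paper: the paper simply records the lemma as a ``straightforward consequence of Proposition \ref{goh} and \cite[Corollary 2.1.7]{BCS}'', while you invoke the same corollary for $(EV_{p,q})$ and then spell out the standard Laplace-transform passage from semigroup to resolvent that underlies the $(RV_{p,q})$ part of that corollary. The only difference is expository---you unpack the resolvent argument explicitly, recovering the same threshold $\delta<2/\nu$.
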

This will be used in the proof of Theorem \ref{main}.


\section{Strong subcriticality and $\mathcal{L}$-harmonic sections}\label{harm}

Introduce $Q_\mathcal{H}$, the quadratic form associated to $\mathcal{H}$:

$$Q_\mathcal{H}(\omega)=\int_M |\nabla \omega|^2+<\mathcal{R}_+\omega,\omega>-\int_{M\setminus K_0}\left(\mathcal{R}_-\omega,\omega\right),$$
and denote by $H_0^1$ the completion of $C_0^\infty(E)$ for the norm $\|\omega\|_{Q_\mathcal{H}}^2=Q_\mathcal{H}(\omega)$. Note that the space $H_0^1$ depends on $\mathcal{H}$, that is on $\mathcal{L}$ and on $K_0$.  Then define $\mathrm{Ker}_{H_0^1}(\mathcal{L})$ to be the set of $\omega\in H_0^1$, such that $\mathcal{L}\omega=0$ in the distribution sense. We claim that if $\varepsilon>0$ is small enough so that

$$\sup_{x\in M}\int_{M\setminus K_0}G(x,y)||\mathcal{R}_-(y)||_y\,dy<1-\varepsilon,$$
then for all $\omega\in C_0^\infty(E)$,

\begin{equation}\label{eps-pos}
Q_\mathcal{H}(\omega)\geq \varepsilon \int_M|\nabla \omega|^2.
\end{equation}
In order to prove this, consider the operator $\mathcal{P}=\nabla^*\nabla-(1-\varepsilon)^{-1}\mathcal{W}_\infty$. According to Proposition \ref{goh}, the heat kernel associated with $\mathcal{P}$ has Gaussian estimates, hence is non-negative. In terms of quadratic forms, this means that for all $\omega\in C_0^\infty(E)$,

\begin{equation}\label{eps-pos2}
(1-\varepsilon)\int_M |\nabla \omega|^2-\int_{M\setminus K_0}\left(\mathcal{R}_-\omega,\omega\right)\geq 0.
\end{equation}
Thus, \eqref{eps-pos} holds. Note that if $K_0=\emptyset$, this proves Remark \ref{first}.

Now, the non-parabolicity of $M$ is equivalent to the existence of a positive function $\rho$ such that, for every $u\in C_0^\infty(M)$,

\begin{equation}\label{NP}
\int_M\rho u^2\leq \int_M|\nabla u|^2
\end{equation}
(see e.g. \cite{PT}). Using \eqref{eps-pos}, \eqref{NP} and the Kato inequality (see e.g. \cite[Proposition 2.2]{HSU}) $|\nabla \omega|\geq |\nabla|\omega||$ a.e., one obtains for all $\omega\in C_0^\infty(E)$,

\begin{equation}\label{vNP}
\int_M\rho |\omega|^2\leq Q_\mathcal{H}(\omega).
\end{equation}
In particular, this shows that $H_0^1$ injects into $L^2_{loc}$ so it is a function space. Note that as consequence of \eqref{vNP}, $\mathrm{Ker}_{L^2}(\mathcal{H})=\{0\}$. Indeed, by self-adjointness, every element $\omega$ of $\mathrm{Ker}_{L^2}(\mathcal{H})$ lies in the domain of the quadratic form $Q_\mathcal{H}$, and satisfy $Q_\mathcal{H}(\omega)=0$, which by \eqref{vNP} implies that $\omega\equiv0$. This allows us to define operators $\mathcal{H}^{-\alpha}$, $\alpha>0$, by means of the spectral theorem, namely as $f(\mathcal{H})$, where $f(x)=x^{-\alpha}$ for $x>0$, and $f(0)=0$; since $\mathrm{Ker}_{L^2}(\mathcal{H})=\{0\}$, the spectral measure does not charge $0$, and so the value of $f$ at $0$ does not matter. Equivalently, one can use the heat kernel to define $\mathcal{H}^{-\alpha}$:

$$\mathcal{H}^{-\alpha}=\frac{1}{\Gamma(\alpha)}\int_0^\infty t^{\alpha-1}e^{-t\mathcal{H}}\,dt.$$
In the case  $\alpha=\frac{1}{2}$, there is  yet another equivalent way to define $\mathcal{H}^{-1/2}$: the operator $\mathcal{H}^{-1/2}$, defined by the spectral theorem, is an isometric embedding from $C_0^\infty$ endowed with the $L^2$ norm, to $H_0^1$, and thus extends by density to a bounded operator from $L^2$ to $H_0^1$. It turns out that this operator is a bijective isometry from $L^2$ to $H_0^1$, and on $H_0^1\cap L^2$ it coincides with the operator $\mathcal{H}^{-1/2}$ defined by the spectral theorem. See \cite[Section 3]{D0} for more details; the proofs are written for the scalar Laplacian, but use only the self-adjointness and the inequality \eqref{NP}, so by \eqref{vNP} they easily adapt to the present context.

Note that under condition \eqref{vol1} (which we recall under assumptions \eqref{d} and \eqref{UE} holds if and only if $M$ is non-parabolic), the operator $\mathcal{H}^{-1}$ has a well-defined, finite kernel outside of the diagonal: indeed, using the fact that the heat kernel of $\mathcal{H}$ has Gaussian estimates, and the formula
$$\mathcal{H}^{-1}=\int_0^{+\infty} e^{-t\mathcal{H}}\,dt,$$
it follows that for every $x\neq y$,
$$||\mathcal{H}^{-1}(x,y)||_{y,x}\leq C\int_{d^2(x,y)}^{+\infty} \frac{dt}{V(x,\sqrt{t})}<+\infty.$$

\bigskip

\begin{Lem}\label{L7}

Assume $M$ is non-parabolic and satisfies condition \eqref{vol2} for some $p\in[2,+\infty]$. Then
$$\mathrm{Ker}_{L^2}(\mathcal{L})\subset \mathrm{Ker}_{H_0^1}(\mathcal{L})\subset \mathrm{Ker}_{L^p}(\mathcal{L}).$$
In particular, if  $M$ is non-parabolic and satisfies $(V^2)$, then
$$\mathrm{Ker}_{H_0^1}(\mathcal{L})=\mathrm{Ker}_{L^2}(\mathcal{L}).$$
\end{Lem}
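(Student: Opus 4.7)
The plan is to exploit the decomposition $\mathcal{L} = \mathcal{H} - \mathcal{W}_0$, where $\mathcal{W}_0 = \mathbf{1}_{K_0}\mathcal{R}_-$ belongs to $L^\infty$ and has compact support, so that $\mathcal{L}$ differs from the ``good'' operator $\mathcal{H}$ only by a bounded, compactly supported perturbation.

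\emph{First inclusion.} The key observation is that $\mathcal{W}_0$ is a form-small perturbation of $\mathcal{H}$: for every $\varphi \in C_0^\infty(E)$,
\[
|Q_\mathcal{L}(\varphi) - Q_\mathcal{H}(\varphi)| \;=\; \langle \mathcal{W}_0 \varphi, \varphi\rangle \;\le\; \|\mathcal{W}_0\|_\infty \|\varphi\|_{L^2}^2.
\]
Thus the form norms $(\|\cdot\|_{L^2}^2 + Q_\mathcal{L})^{1/2}$ and $(\|\cdot\|_{L^2}^2 + Q_\mathcal{H})^{1/2}$ are equivalent, the Friedrichs form domains satisfy $D(Q_\mathcal{L}) = D(Q_\mathcal{H})$, and both embed continuously into $H_0^1$ (the embedding being injective by \eqref{vNP}). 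Since $\mathcal{D}_{L^2}(\mathcal{L}) \subset D(Q_\mathcal{L})$ by self-adjointness and $\mathcal{L}\omega=0$ in $L^2$ implies $\mathcal{L}\omega = 0$ in distributions, the first inclusion follows.

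\emph{Second inclusion.} Take $\omega \in \mathrm{Ker}_{H_0^1}(\mathcal{L})$, so that $\mathcal{H}\omega = \mathcal{W}_0\omega$ distributionally. Elliptic regularity gives $\omega \in C^{1,\alpha}_{\mathrm{loc}}$, whence $f := \mathcal{W}_0\omega \in L^\infty_c \subset L^r$ for every $r$. Using Proposition~\ref{goh} and non-parabolicity (which is implied by \eqref{vol2} when $p \in [2,\infty]$), the function
\[
v(x) \;:=\; \int_0^\infty (e^{-t\mathcal{H}} f)(x)\, dt
\]
is well-defined pointwise, locally bounded, and is a distributional solution of $\mathcal{H}v = f$. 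Approximating $v$ in $L^2_{\mathrm{loc}}$ by $v_n := \int_{1/n}^n e^{-t\mathcal{H}} f\, dt$ and using the spectral identity $\|\mathcal{H}^{-1/2} f\|_{L^2}^2 = \langle f, v\rangle < \infty$ (finite because $f$ is compactly supported and $v$ is locally bounded), one shows the $v_n$ are uniformly bounded in $H_0^1$ and hence $v \in H_0^1$. Then $\omega - v \in H_0^1$ is $\mathcal{H}$-harmonic in the form sense, so by density of $C_0^\infty$ in $H_0^1$, $Q_\mathcal{H}(\omega - v) = 0$; combined with \eqref{vNP} this yields $\omega = v$ almost everywhere.

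It remains to estimate $\|v\|_{L^p}$. By Minkowski's inequality,
\[
\|v\|_{L^p} \;\le\; \int_0^1 \|e^{-t\mathcal{H}} f\|_{L^p}\, dt \,+\, \int_1^\infty \|e^{-t\mathcal{H}} f\|_{L^p}\, dt.
\]
The first integral is bounded by $C\|f\|_{L^p}$ thanks to the uniform $L^p$-boundedness of $e^{-t\mathcal{H}}$ given by Lemma~\ref{H_op}. For the tail integral, the Gaussian bound combined with the classical estimate $\int V(x,\sqrt{t})^{-p} \exp(-p\, d(x,y)^2/Ct)\, d\mu(x) \lesssim V(y,\sqrt{t})^{1-p}$ yields, for $y \in K_0$ and $t \geq 1$, $\|p_t^\mathcal{H}(\cdot,y)\|_{L^p} \lesssim V(x_0,\sqrt{t})^{-1+1/p}$ (using boundedness of $K_0$ and doubling). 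Hence $\|e^{-t\mathcal{H}} f\|_{L^p} \lesssim V(x_0,\sqrt{t})^{-1+1/p}\|f\|_{L^1}$, which is integrable on $[1,\infty)$ precisely because of \eqref{vol2}. This shows $\omega = v \in L^p$; since $\mathcal{L}\omega = 0 \in L^p$, $\omega \in \mathrm{Ker}_{L^p}(\mathcal{L})$. The last sentence of the lemma follows by taking $p=2$ in the second inclusion and combining with the first.

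I expect the two main technical hurdles to be (i) showing rigorously that the pointwise integral $v$ lies in $H_0^1$ so that the uniqueness argument $\omega = v$ is legitimate, and (ii) the volume-doubling chaining needed to convert the Gaussian upper bound on $p_t^\mathcal{H}$ into the weighted estimate $\|e^{-t\mathcal{H}} f\|_{L^p} \lesssim V(x_0,\sqrt{t})^{-1+1/p}\|f\|_{L^1}$ that exactly matches condition \eqref{vol2}.
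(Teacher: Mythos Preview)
Your argument is correct and follows essentially the same route as the paper: both inclusions hinge on the decomposition $\mathcal{L}=\mathcal{H}-\mathcal{W}_0$, with the second established by showing that any $\omega\in\mathrm{Ker}_{H_0^1}(\mathcal{L})$ coincides with $\mathcal{H}^{-1}\mathcal{W}_0\omega$ (uniqueness in $H_0^1$ via \eqref{vNP}) and then bounding $\|\mathcal{H}^{-1}\mathcal{W}_0\omega\|_{L^p}$ by splitting $\int_0^1+\int_1^\infty$ and invoking \eqref{vol2} for the tail. The only noteworthy differences are cosmetic: where the paper shows $\tilde\omega\in H_0^1$ by the clean factorisation $\mathcal{H}^{-1/2}(\mathcal{H}^{-1/2}\mathcal{W}_0^{1/2})\mathcal{W}_0^{1/2}\omega$ and an appeal to \cite{D0}, you use truncated time-integrals and weak compactness; and where the paper obtains the $L^1(K_0)\to L^p$ bound on $e^{-t\mathcal{H}}$ directly from Lemma~\ref{H_op}, you re-derive it by integrating the Gaussian pointwise bound.
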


\noindent{\em Proof: }
Let $\omega\in \mathrm{Ker}_{L^2}(\mathcal{L})$. By elliptic regularity, $\omega$ is $C^{1,\alpha}_{loc}$ for some $\alpha\in (0,1)$. Then, in the distribution sense,
$$\mathcal{H}\omega=\mathcal{W}_0\,\omega,$$
and since $\mathcal{W}_0$ has compact support, $\mathcal{H}\omega\in L^2$. Hence, $\omega$ is in the domain of the quadratic form $Q_\mathcal{H}$ associated to $\mathcal{H}$. Therefore (see \cite[Proposition 3.2]{D0}) $\omega \in H_0^1$. We conclude that $\mathrm{Ker}_{L^2}(\mathcal{L})\subset \mathrm{Ker}_{H_0^1}(\mathcal{L})$.\\

We now want to prove the inclusion $ \mathrm{Ker}_{H_0^1}(\mathcal{L})\subset \mathrm{Ker}_{L^p}(\mathcal{L})$ if \eqref{vol2} holds.  Let $\omega\in \mathrm{Ker}_{H_0^1}(\mathcal{L})$. Again,  by elliptic regularity, $\omega$ is $C^{1,\alpha}_{loc}$ for all $\alpha\in (0,1)$. Define

$$\tilde{\omega}=-\mathcal{H}^{-1}\mathcal{W}_0\,\omega.$$
We claim that $\omega=\tilde{\omega}$. That is to say, $\omega$ is the solution of the equation $\mathcal{H}u=-\mathcal{W}_0\,\omega$ of ``minimal growth'' at infinity. To see this, notice that since $\omega$ belongs to $\mathrm{Ker}_{H_0^1}(\mathcal{L})$, the following holds in the distribution sense:

$$\mathcal{H}(\omega-\tilde{\omega})=0.$$
Writing

$$\tilde{\omega}=\mathcal{H}^{-1/2}(\mathcal{H}^{-1/2}\mathcal{W}_0^{1/2})\mathcal{W}_0^{1/2}\omega,$$
and using the facts that $\mathcal{H}^{-1/2}:L^2\to H_0^1$, that $\mathcal{H}^{-1/2}\mathcal{W}_0^{1/2}$ is bounded on $L^2$ (from the fact that $\mathcal{W}_0$ has compact support and \cite[Prop. 3.4]{D0}, see also Lemma \ref{L4b}), and that $\mathcal{W}_0^{1/2}\omega\in L^2$, one gets that $\tilde{\omega}\in H_0^1$. Therefore, for every $\varphi\in C_0^\infty(E)$,

$$\begin{array}{rcl}
0&=&\int_M\left( \omega-\tilde{\omega},\mathcal{H}\varphi\right)\\
&=&\int_M\left( \mathcal{H}^{1/2}(\omega-\tilde{\omega}),\mathcal{H}^{1/2}\varphi\right)\\
&=&\langle \omega -\tilde{\omega},\varphi\rangle_{H_0^1}
\end{array}$$
Since by definition $C_0^\infty(E)$ is dense in $H_0^1$, one gets

$$\omega =\tilde{\omega}$$
in $H_0^1$. But by \eqref{vNP}, $H_0^1$ injects into $L^2_{loc}$, 
hence $\omega =\tilde{\omega}$ almost everywhere. Since $\omega$ and $\tilde{\omega}$ are both $C^{1,\alpha}_{loc}$, one gets that the equality $\omega =\tilde{\omega}$ holds pointwise on $M$. Thus,

\begin{equation}\label{omeg}
\omega=-\mathcal{H}^{-1}\mathcal{W}_0\,\omega.
\end{equation}
We claim that since $\mathcal{W}_0$ has compact support, by \eqref{vol2} the operator $\mathcal{H}^{-1}\mathcal{W}_0^{1/2}$ is bounded on $L^p$. Indeed,
$$\mathcal{H}^{-1}\mathcal{W}_0^{1/2}=\int_0^{+\infty}  e^{-t\mathcal{H}}\mathcal{W}_0^{1/2}\,dt,$$
hence
$$\|\mathcal{H}^{-1}\mathcal{W}_0^{1/2}\|_{p\to p} \le  
\int_0^1 \| e^{-t\mathcal{H}}\mathcal{W}_0^{1/2}\|_{p\to p}\,dt 
 +\int_1^{+\infty} \| e^{-t\mathcal{H}}\mathcal{W}_0^{1/2}\|_{p\to p}\,dt.$$
Since $\mathcal{W}_0^{1/2}\in L^\infty_{loc}$, $\|\mathcal{W}_0^{1/2}\|_{p\to p}<+\infty$ and
$$\int_0^1 \| e^{-t\mathcal{H}}\mathcal{W}_0^{1/2}\|_{p\to p}\,dt\le C
\int_0^1 \| e^{-t\mathcal{H}}\|_{p\to p}\,dt<+\infty$$
thanks to Proposition  \ref{goh}.
Now, since $\mathcal{W}_0$ is supported in $K_0$,
 $$\int_1^{+\infty} \| e^{-t\mathcal{H}}\mathcal{W}_0^{1/2}\|_{p\to p}\,dt
\le\int_1^{+\infty} \| e^{-t\mathcal{H}}\|_{L^1(K_0)\to L^p}\|\mathcal{W}_0^{1/2} \|_{p\to 1}\,dt.$$
By \eqref{gaga}, for a fixed $x_0\in K_0$,
$$
 \| e^{-t\mathcal{H}}\|_{L^1(K_0)\to L^p}\le  \frac{C}   {\left[V(x_0, \sqrt{t}) \right]^{1-\frac{1}{p}}}. $$
 Finally $\|\mathcal{H}^{-1}\mathcal{W}_0^{1/2}\|_{p\to p}<+\infty$ by
 \eqref{vol2}.
 
Now if $p=2$ this clearly finishes the proof. To conclude the argument for  $p>2$ we note that 
 $\omega\in L^\infty_{loc}$ (since $\omega$ is $C^{1,\alpha}_{loc}$ for all $\alpha\in (0,1)$) and since $\mathcal{W}_0$ has compact support, one gets that $\mathcal{W}_0^{1/2}\omega$ is in particular in $L^p$. Therefore we conclude that $\tilde{\omega}$ is in $L^p$. By \eqref{omeg}, we conclude that $\omega$ is in $L^p$. Consequently, we have shown that
$$\mathrm{Ker}_{H_0^1}(\mathcal{L})\subset L^p.$$

\cqfd
We now prove the result that was claimed in Remark \ref{optimal2}:

\begin{Pro}\label{exa}

Let $u\in C^\infty(\R^3)$ such that $u(x)=|x|^{-1}$ for all $|x|\geq 1$,
$u(x) > 0$ for all $x\in \R^3$, and let $V=\frac{\Delta u}{u}$, and $L=\Delta-V$. Then $L$ is non-negative, does not satisfy \eqref{vecUE}, and 

$$\mathrm{Ker}_{L^p}(L)=\left\{\begin{array}{lcr}
\{0\},\,\mbox{for }1\leq p\leq 3,\\
\neq \{0\}\,\mbox{for }p>3.
\end{array}\right.$$ 

\end{Pro}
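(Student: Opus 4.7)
The plan is to use that $u$ is by construction a positive solution of $Lu=0$, and to leverage the ground state substitution both to verify the non-negativity of $L$ and to reduce the kernel problem for $p\le 3$ to a weighted elliptic equation satisfying a strong maximum principle. First, since $u(x)=|x|^{-1}$ is classically harmonic on $\R^3\setminus\{0\}$, the potential $V=\Delta u/u$ vanishes on $\{|x|\geq 1\}$ and is therefore smooth and compactly supported in $\overline{B(0,1)}$; and $Lu=\Delta u-Vu=0$ by the very definition of $V$. For any $\omega\in C_0^\infty(\R^3)$, setting $v=\omega/u\in C_0^\infty(\R^3)$ (legitimate because $u>0$ and smooth) and using $\Delta u=Vu$, a one-line calculation gives the ground state identity
\begin{equation*}
L(uv)=-\frac{1}{u}\,\div(u^2\nabla v),
\end{equation*}
whence $\langle L\omega,\omega\rangle=\int_{\R^3}u^2|\nabla v|^2\dx\geq 0$, and so $L\geq 0$.

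Next, a direct integration shows that $u\in L^p(\R^3)$ if and only if $\int_1^\infty r^{2-p}\dr<\infty$, i.e.\ if and only if $p>3$. Consequently $u$ is a non-trivial element of $\ker_{L^p}(L)$ for every $p>3$, and Lemma \ref{converse} then forces $(U\!E_L)$ to fail.

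For $1\leq p\leq 3$, let $\omega\in\ker_{L^p}(L)$. By elliptic regularity $\omega$ is smooth, hence so is $v=\omega/u$, and the identity above turns $L\omega=0$ into the equation
\begin{equation*}
\div(u^2\nabla v)=0\quad\text{on }\R^3,
\end{equation*}
which is uniformly elliptic on each compact set and therefore satisfies the strong maximum principle on every ball. The key step is to show $v(x)\to 0$ as $|x|\to\infty$. Since $V$ vanishes outside $B(0,1)$, $\omega$ is classically harmonic on $\{|y|>1\}$, so the $L^p$-mean-value inequality yields, for $|x|>2$,
\begin{equation*}
|\omega(x)|\leq C\,|x|^{-3/p}\,\|\omega\|_{L^p(B(x,|x|/2))},
\end{equation*}
and the right-hand side is $o(|x|^{-3/p})$ because $|\omega|^p\in L^1(\R^3)$ and the balls $B(x,|x|/2)$ escape every fixed compact set as $|x|\to\infty$. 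Hence, for $|x|\geq 1$,
\begin{equation*}
|v(x)|=|x|\,|\omega(x)|=o(|x|^{1-3/p})\longrightarrow 0,
\end{equation*}
using $1-3/p\leq 0$ for $p\leq 3$. The strong maximum principle on $B(0,R)$ then gives $\sup_{\overline{B(0,R)}}v=\sup_{\partial B(0,R)}v$, which tends to $0$ as $R\to\infty$; hence $v\leq 0$ on $\R^3$. Applying the same argument to $-v$ yields $v\equiv 0$, and therefore $\omega\equiv 0$.

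There is no substantial obstacle: the argument is driven by the observation that $u$ serves as a virtual positive ground state for $L$, simultaneously delivering non-negativity, a non-trivial $L^p$-kernel for $p>3$, and, via the ground state transform, a weighted equation to which the maximum principle can be applied for $p\leq 3$. The only mildly delicate point is the borderline case $p=3$, where one genuinely needs the $L^p$-averaged mean-value bound (rather than a sup-norm bound) to extract $o(|x|^{-1})$ decay of $\omega$ from mere $L^3$-integrability.
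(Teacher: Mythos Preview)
Your proof is correct and takes a genuinely different route from the paper's. Both arguments share the same opening (positivity of $u$, $Lu=0$, $u\in L^p$ exactly for $p>3$, and Lemma~\ref{converse} to kill $(U\!E_L)$) and the same endgame (comparison/maximum principle once one knows $\omega/u\to 0$ at infinity), but the middle is different. The paper first reduces all $p\le 3$ to $p=3$ via an ultracontractivity bound, then represents $v\in\mathrm{Ker}_{L^3}(L)$ as $v=\Delta^{-1}Vv$, invokes Yau's Liouville theorem to rule out a harmonic remainder, and reads off the decay of $v$ from explicit Green-function asymptotics on $\R^3$; only then does it apply the comparison principle. You instead carry the ground-state substitution $\omega=uv$ through the whole argument: non-negativity becomes the one-line identity $\langle L\omega,\omega\rangle=\int u^2|\nabla v|^2$, the kernel equation becomes $\div(u^2\nabla v)=0$, and the required decay $v\to 0$ comes directly from the $L^p$ mean-value inequality for the classically harmonic $\omega$ on $\{|x|>1\}$, handling all $p\le 3$ uniformly and capturing the borderline $p=3$ through the $o(\cdot)$ gained from $L^p$-integrability. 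Your approach is more self-contained and elementary---it bypasses Yau's theorem and the Green-function expansion entirely---while the paper's approach keeps the argument in the language (Green operators, $\Delta^{-1}$, comparison of solutions) that is used elsewhere in the paper.
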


\begin{proof}

Notice that $u$ is a positive solution of $Lv=0$. Therefore, by the Agmon-Allegretto-Piepenbrink theorem (see \cite{Ag}), $L$ is non-negative. Clearly, $u\in \mathrm{Ker}_{L^p}(\mathcal{L})$ for every $p>3$, which shows that for $p>3$, $\mathrm{Ker}_{L^p}(L)\neq \{0\}$. By Lemma \ref{converse}, $\mathrm{Ker}_{L^p}(L)=\{0\}$ for all $p\in [1,\infty)$ is necessary for the validity of \eqref{vecUE}; we can thus conclude that \eqref{vecUE} does not hold for $L$. It remains to show that for every $1\leq p\leq 3$, $\mathrm{Ker}_{L^p}(L)=\{0\}$. First, notice that from the ultracontractivity bound $||e^{-t\Delta}||_{p\to \infty}\leq Ct^{-\frac{3}{2p}},$ one has

$$||e^{-tL}||_{p\to \infty}\leq Ce^{Bt}t^{-\frac{3}{2p}},$$
where $B$ is an upper bound for $V$. In particular, it is finite for any fixed $t>0$. Since any element of $\mathrm{Ker}_{L^p}(L)$ is invariant under $e^{-tL}$, this shows that 

$$\mathrm{Ker}_{L^p}(L)\subset L^p\cap L^\infty.$$
It is thus enough to prove that $\mathrm{Ker}_{L^3}(L)=\{0\}$. Let $v\in\mathrm{Ker}_{L^3}(L)$. Define

$$g=\Delta^{-1}Vv.$$
Then $g$ satisfies $\Delta g=Vv$, therefore $\Delta(v-g)=0$. Since $V$ is smooth and compactly supported, $Vv$ is in $L^2$. On $\R^3$, $\Delta^{-1}$ is bounded from $L^2$ to $L^6$, and therefore $g-v\in L^6$. But by a result of Yau \cite{Y}, for any $q\in (0,\infty)$, there is no non-zero $L^q$ harmonic function on a complete manifold. As a consequence, $v=g$. Using the fact that the Green function on $\R^3$ is (up to a multiplicative constant) $|x-y|^{-1}$, one finds that if $\int_M V(y)v(y)\,dy\neq 0$ then as $|x|\to\infty$,

\begin{equation}\label{x-1}
g(x)\sim |x|^{-1}.
\end{equation}
Also, if  $\int_M V(y)v(y)\,dy= 0$, then as $|x|\to\infty$,

\begin{equation}\label{x-2}
g(x)\lesssim |x|^{-2},
\end{equation}
If \eqref{x-1} is satisfied, $v=g$ cannot be in $L^3$, contradiction. Therefore, \eqref{x-2} is satisfied, and this implies that 

$$\lim_{|x|\to\infty}\frac{v(x)}{u(x)}=0.$$
We are going to prove that $v\equiv0$. Let $\varepsilon>0$. Take $R>0$ big enough so that for all $|x|=R$,

\begin{equation}\label{boundary}
|v(x)|\leq \varepsilon u(x).
\end{equation}
Since $\mathcal{L}$ is non-negative, it satisfies the comparison principle on the ball of center $0$ and radius $R$, and given that $v$ and $u$ are both solutions of $Lw=0$, one deduces from \eqref{boundary} that for every $|x|\leq R$,

$$|v(x)|\leq \varepsilon ux).$$
Fix $x\in \R^3$, then for every $\varepsilon$ small enough so that $R>|x|$, one has

$$|v(x)|\leq \varepsilon u(x).$$
Letting $\varepsilon\to0$, hence $R\to\infty$, one finds that $v(x)=0$. Since this is true for all $x\in \R^3$, $v\equiv0$. As a consequence,

$$\mathrm{Ker}_{L^3}(L)=\{0\}.$$

\end{proof}
To conclude this section, we prove the result that was claimed in Remark \ref{two-planes}.

\begin{Pro}\label{Lpara}

Let $M=\R^2\sharp\R^2$, be a connected sum of two Euclidean planes. Then the Hodge Laplacian on $1$-forms $\vec{\Delta}$ is not strongly subcritical, and for every $p>2$,

$$\mathrm{Ker}_{L^p}(\vec{\Delta})\neq\{0\}.$$
If moreover the gluing is made in such a way that $M$ has genus zero, then

$$\mathrm{Ker}_{L^2}(\vec{\Delta})=\{0\}.$$

\end{Pro}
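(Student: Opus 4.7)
The plan is to produce a single non-constant harmonic function $h$ on $M$ whose differential $dh$ simultaneously kills $(\vec{U\!E})$, breaks strong subcriticality, and populates $\mathrm{Ker}_{L^p}(\vec{\Delta})$ for $p>2$, and then to handle the genus-zero claim by a separate period/parabolicity argument. To construct $h$, fix asymptotic polar coordinates $(r_i,\theta_i)$ on each Euclidean end $E_i$ and choose $\phi\in C^\infty(M)$ with $\phi=\log r_1$ on $E_1$ and $\phi=-\log r_2$ on $E_2$ outside a compact neck; since $\log r$ is harmonic on $\mathbb{R}^2\setminus\{0\}$, $g:=\Delta\phi$ is compactly supported, and a boundary flux computation gives $\int_M g\,dV=0$. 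Solve $\Delta v_R=-g$ on an exhaustion $B_R$ with zero Dirichlet data; the vanishing total mass of $g$ prevents logarithmic blow-up of $v_R$, and elliptic estimates produce a $C^{1,\alpha}_{\mathrm{loc}}$ limit $v$. Then $h:=\phi+v$ is harmonic on $M$ with $h\sim\pm\log r_i$ on $E_i$, so $|dh|\sim r_i^{-1}$ and $|\nabla^2 h|\sim r_i^{-2}$ at infinity; hence $\int_M|dh|^p\,dV<\infty$ precisely for $p>2$, while $\int_M|\nabla(dh)|^2\,dV$ is finite and strictly positive.

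Since $\vec{\Delta}(dh)=d\,\Delta h=0$, the above immediately yields $dh\in\mathrm{Ker}_{L^p}(\vec{\Delta})\setminus\{0\}$ for every $p>2$, and Lemma \ref{converse} then rules out $(\vec{U\!E})$. For strong subcriticality, I would test the putative inequality $\langle\vec{\Delta}\omega,\omega\rangle\ge\eta\,\langle(\bar{\Delta}+\mathrm{Ric}_+)\omega,\omega\rangle$ against $\omega_R:=\phi_R\,dh$, where $\phi_R\in C_0^\infty$ equals $1$ on $B_R$, vanishes off $B_{2R}$, and satisfies $|\nabla\phi_R|\le C/R$. Because $d(dh)=0$ and $d^*(dh)=\Delta h=0$, the identities $d\omega_R=d\phi_R\wedge dh$ and $d^*\omega_R=-\langle\nabla\phi_R,\nabla h\rangle$ give
\[
\langle\vec{\Delta}\omega_R,\omega_R\rangle\;\le\;2\int|\nabla\phi_R|^2|dh|^2\,dV\;\lesssim\;R^{-2}\int_{B_{2R}\setminus B_R}|dh|^2\,dV\;\longrightarrow\;0,
\]
whereas a standard expansion of $\nabla(\phi_R\,dh)$ combined with Cauchy--Schwarz forces $\|\nabla\omega_R\|_2^2\to\int_M|\nabla(dh)|^2>0$, and $\langle\mathrm{Ric}_+\omega_R,\omega_R\rangle$ stabilises to a finite constant because $\mathrm{Ric}$ is supported in the compact neck. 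Passing to the limit, the inequality collapses to $0\ge\eta\cdot(\text{positive})$, excluding $\eta>0$.

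For the genus-zero vanishing, $M$ is topologically a cylinder, so $H_1(M)\simeq\mathbb{Z}$ is generated by the neck loop $\gamma$, homologous to any large circle $\gamma_r=\{r_i=r\}$ in either end. Take $\omega\in\mathrm{Ker}_{L^2}(\vec{\Delta})$; completeness gives $d\omega=d^*\omega=0$, and Cauchy--Schwarz on $\gamma_r$ followed by integration in $r\in[R_1,R_2]$ yields
\[
\Bigl|\int_\gamma\omega\Bigr|^2\log(R_2/R_1)\;\le\;C\int_{R_1<r_i<R_2}|\omega|^2\,dV\;\le\;C\|\omega\|_2^2,
\]
so letting $R_2\to\infty$ forces the period of $\omega$ along $\gamma$ to vanish, and $\omega=df$ globally. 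Then $f$ is harmonic with $df\in L^2$; an angular Fourier-mode analysis on each asymptotically Euclidean end rules out the logarithmic and $r^n$ modes ($n\ge 1$), leaving only decaying $r^{-n}\cos(n\theta+\varphi_n)$ terms, so $f$ is bounded and tends to a limit at each end. Since $M$ is parabolic, one picks cutoffs $\phi_n\in C_0^\infty$ with $\phi_n\to 1$ locally and $\int|\nabla\phi_n|^2\to 0$ and tests $\Delta f=0$ against $\phi_n^2 f$; a Cauchy--Schwarz bootstrap on $\int\phi_n^2|df|^2$ then gives $df\equiv 0$, hence $\omega=0$. The main obstacle is carrying out the harmonic-function asymptotics of Steps 1 and 3 on the \emph{non-flat} metric near the neck rather than on the exact Euclidean model; the existence and asymptotic expansion of $h$, and the transfer of the flat-model Fourier decomposition of $f$ to the true metric, are classical but require careful perturbative bookkeeping.
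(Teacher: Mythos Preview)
Your proof is correct and takes a genuinely different route from the paper's.

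\medskip

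\textbf{Comparison.} The paper argues all three claims by invoking external structural results: for the failure of strong subcriticality it quotes Magniez's theorem (strong subcriticality with $\nu=2$ would force $d^*\vec{\Delta}^{-1/2}$ bounded on $L^p$, $p\in(1,2]$, hence by duality $d\Delta^{-1/2}$ bounded on $L^p$, $p\ge 2$) and then Carron--Coulhon--Hassell to reach a contradiction. For the $L^p$ kernel it conformally compactifies $M$ to a compact Riemann surface $\Sigma$ with two punctures and pulls back a meromorphic differential with simple poles of residue $\pm 1$ at the punctures, obtaining a harmonic $1$-form with $|\omega|\sim r^{-1}$. For the genus-zero $L^2$ vanishing it again compactifies, uses conformal invariance of the Dirichlet integral and removability of $L^2$ singularities to extend any $L^2$ harmonic $1$-form to $\mathbb{S}^2$, and then appeals to $H^1(\mathbb{S}^2)=0$.

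Your approach is more hands-on and self-contained: you build the log-growth harmonic function $h$ directly by potential theory (the object $dh$ is the same as the paper's real part of the Abelian differential), you disprove strong subcriticality by an explicit cut-off test against $\phi_R\,dh$, and you handle the $L^2$ vanishing by a period estimate plus parabolic Liouville. This avoids citing Magniez, CCH, and Riemann-surface machinery, at the cost of some explicit computation; the paper's route is shorter to write but leans on deeper black boxes.

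\medskip

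\textbf{One point to tighten.} Your construction of $h$ via the exhaustion $\Delta v_R=-g$ on $B_R$ is the most delicate step, precisely because $M$ is parabolic: uniform control of $v_R$ as $R\to\infty$ is not automatic, and ``the vanishing total mass of $g$ prevents logarithmic blow-up'' needs a genuine argument (e.g.\ a barrier or a maximum-principle comparison on each end). Since the ends of $M=\R^2\sharp\R^2$ are \emph{exactly} Euclidean outside a compact set (as the paper notes), you can shortcut this: on each end the harmonic extension of $v_R$ has an explicit Fourier expansion, and the zero-flux condition kills the $\log r$ mode, giving a uniform bound directly. Alternatively, the existence of $h$ follows from Li--Tam \cite{LT}, which the paper already cites. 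Either way this is routine once stated carefully, and your acknowledgment of it as ``the main obstacle'' is accurate.
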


\begin{proof}

It follows from a result of Magniez \cite{M} that since $\nu=2$ for $M$, if $\vec{\Delta}$ were strongly subcritical, then $d^*\vec{\Delta}^{-1/2}$ would be bounded on $L^p$ for all $p\in (1,2]$. This would imply by duality that the Riesz transform $d\Delta^{-1/2}$ on functions is bounded on $L^p$, $p\in [2,\infty)$, which is known to be false (see \cite{CCH}). Thus, $\vec{\Delta}$ cannot be strongly subcritical.

Let us now prove the two statements concerning $\mathrm{Ker}_{L^p}(\vec{\Delta})$. We first observe that there is a compact set $K$ such that $M\setminus K$ is isometric to two copies of $\R^2\setminus \mathbb{D}$, where $\mathbb{D}$ is the disk of center zero and radius, say, $1$. Applying the M\"{o}bius transformation $f(z)=1/z$ on each copy of $\R^2\setminus \mathbb{D}$, we see that $M\setminus K$ is conformally equivalent to the disjoint union of two Euclidean unit disks, punctured at their center. We can thus compactify $M$ by adding the two punctures $p$ and $q$, and obtain a compact Riemann surface $(\Sigma,g)$ such that $M$ is conformal to $(\Sigma\setminus \{p,q\},g)$. Moreover, the metric $g$ coincide with the standard spherical metric on a neighborhood of $p$ and of $q$. By general theory of Riemann surfaces (see e.g. \cite{FK}, p.51), there exists a meromorphic (complex) differential $\eta$ on $\Sigma^2$, that is holomorphic on $\Sigma^2\setminus \{p,q\}$ and has singularity $\frac{dz}{z}$ at $p$, and $-\frac{dz}{z}$ at $q$. It gives rise to a meromorphic differential $\tilde{\eta}$ on $M$, with the following asymptotics:

\begin{equation}\label{asymp}
\tilde{\eta}(\xi)\sim \mp\frac{d\xi}{\xi},\qquad\xi\to\pm\infty_M,
\end{equation}
where ``going to $+\infty_M$'' means going to infinity in the copy of $\R^2$ corresponding to $p$, and ``going to $-\infty_M$'' means going to infinity in the copy of $\R^2$ corresponding to $q$. Considering $\omega=\tilde{\eta}+\bar{\tilde{\eta}}$, one obtains a (real) harmonic $1$-form on $M$, and from \eqref{asymp}, it is obvious that $\omega\in L^p$, for all $p>2$. Thus, we have proved that

$$\mathrm{Ker}_{L^p}(\vec{\Delta})\neq\{0\},\,p>2.$$

We finally prove the statement concerning $\mathrm{Ker}_{L^2}(\vec{\Delta})$. The fact that $M$ has genus zero means that $\Sigma$ is topologically a two-dimensional sphere $\mathbb{S}^2$. Thus, $M$ is conformal to a sphere with two punctures $(\mathbb{S}^2\setminus \{p,q\},g)$.  By conformal invariance of the Dirichlet integral, every harmonic $1$-form $\omega$ on $M$ with a finite $L^2$ norm gives rise to a harmonic $1$-form $\tilde{\omega}$ on $\mathbb{S}^2\setminus \{p,q\}$. Since $\tilde{\omega}$ is in $L^2$, by Weyl's lemma its singularities at $p$ and $q$ are removable, and thus $\tilde{\omega}$ extends to a $L^2$ harmonic $1$-form on $(\mathbb{S}^2,g)$. Since $H^1(\mathbb{S}^2,\R)$, the first De Rham cohomology group of $\mathbb{S}^2$ is trivial, the Hodge theorem implies that $\tilde{\omega}\equiv0$, and thus $\omega\equiv0$. This proves that $\mathrm{Ker}_{L^2}(M)=\{0\}$.

\end{proof}

\section{Spectral radius estimates}\label{spectral}

 In order to obtain Gaussian estimates for $(e^{-t\mathcal{L}})_{t>0}$, a crucial step will be to pass from \eqref{gogo} to a similar estimate for  $\mathcal{L}$,
by writing,  as in \cite{D2}, the standard perturbation formula:
$$
(1+t\mathcal{L})^{-1}=(I-(1+t\mathcal{H})^{-1}t\mathcal{W}_0)^{-1}(1+t\mathcal{H})^{-1},
$$
that is, passing to the resolvent by setting $\lambda=1/t$,
\begin{equation}\label{reso}
(\mathcal{L}+\lambda)^{-1}=(I-(\mathcal{H}+\lambda)^{-1}\mathcal{W}_0)^{-1}(\mathcal{H}+\lambda)^{-1}.
\end{equation}
The term $(\mathcal{H}+\lambda)^{-1}$ is taken care of by Proposition \ref{goh}. We now have to study the factor $(I-(\mathcal{H}+\lambda)^{-1}\mathcal{W}_0)^{-1}$. The key to prove Theorem \ref{main} is the following:

\begin{Pro}\label{key}

Assume that $M$ satisfies \eqref{d}, \eqref{UE} and \eqref{vol1}, and that $\mathrm{Ker}_{H_0^1}(\mathcal{L})=\{0\}$. Then the operator $(I-(\mathcal{H}+\lambda)^{-1}\mathcal{W}_0)^{-1}$ is a well-defined bounded operator on $L^\infty$ and 
$$\sup_{\lambda>0}\|(I-(\mathcal{H}+\lambda)^{-1}\mathcal{W}_0)^{-1}\|_{\infty\to \infty}<+\infty.$$
\end{Pro}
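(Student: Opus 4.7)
The plan is to regard $\mathcal{B}_\lambda := (\mathcal{H}+\lambda)^{-1}\mathcal{W}_0$ as a compact operator on $L^\infty$ and invert $I - \mathcal{B}_\lambda$ via the Fredholm alternative, obtaining the uniform bound in $\lambda$ by a Neumann series at large $\lambda$ and a continuity argument on a compact interval near $0$. The self-adjoint companion operator $\mathcal{A}_\lambda := \mathcal{W}_0^{1/2}(\mathcal{H}+\lambda)^{-1}\mathcal{W}_0^{1/2}$ acting on $L^2$ serves throughout as a spectral-radius certificate.

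First I would establish that $\mathcal{A}_0$ is a non-negative compact self-adjoint operator on $L^2$ with $\|\mathcal{A}_0\|_{2\to 2}<1$. Compactness uses the compact support of $\mathcal{W}_0$ together with the pointwise kernel bound on $\mathcal{H}^{-1}(x,y)$ coming from Proposition \ref{goh} and non-parabolicity $(V^\infty)$. The strict norm bound uses the hypothesis $\mathrm{Ker}_{H_0^1}(\mathcal{L})=\{0\}$: since $\mathcal{A}_0$ is compact self-adjoint and $\geq 0$, its norm is attained at an eigenvalue; if $\mathcal{A}_0\phi=\alpha\phi$ with $\phi\neq 0$ and $\alpha\geq 1$, then $\omega:=\alpha^{-1}\mathcal{H}^{-1}\mathcal{W}_0^{1/2}\phi$ lies in $H_0^1$ (via the $\mathcal{H}^{-1/2}$-isometry discussed in Section~\ref{harm}) and satisfies $\mathcal{H}\omega=\mathcal{W}_0\omega$, hence $\mathcal{L}\omega=0$, contradicting the hypothesis.

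Next I pass to $L^\infty$. Boundedness $\|\mathcal{B}_\lambda\|_{\infty\to\infty}\lesssim \|\mathcal{W}_0\|_\infty/\lambda$ follows from the $L^\infty$-boundedness of $e^{-t\mathcal{H}}$ in Proposition \ref{goh}; compactness on $L^\infty$ uses the compact support of $\mathcal{W}_0$ and local Hölder regularity of solutions of $(\mathcal{H}+\lambda)u=f$ via Arzelà-Ascoli. The non-zero spectra of $\mathcal{B}_\lambda$ on $L^\infty$ and $\mathcal{A}_\lambda$ on $L^2$ coincide: from a nonzero $L^\infty$ eigenpair $(\mu,u)$ of $\mathcal{B}_\lambda$, the section $v:=\mathcal{W}_0^{1/2}u\in L^2$ is nonzero (because $\mu u=(\mathcal{H}+\lambda)^{-1}\mathcal{W}_0^{1/2}v$) and satisfies $\mathcal{A}_\lambda v=\mu v$; conversely an $L^2$ eigenvector $v$ of $\mathcal{A}_\lambda$ produces $u=\mu^{-1}(\mathcal{H}+\lambda)^{-1}\mathcal{W}_0^{1/2}v$, which lies in $L^\infty$ by the Gaussian bounds of Proposition \ref{goh}. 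Since $0\leq\mathcal{A}_\lambda\leq\mathcal{A}_0$ in the quadratic form sense, one deduces $\mathrm{spr}_{L^\infty}(\mathcal{B}_\lambda)=\|\mathcal{A}_\lambda\|_{2\to 2}\leq \|\mathcal{A}_0\|_{2\to 2}<1$ uniformly in $\lambda\geq 0$.

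To extract a uniform bound on $(I-\mathcal{B}_\lambda)^{-1}$, I would split the range of $\lambda$. Choose $\Lambda_0$ so that $\|\mathcal{B}_\lambda\|_{\infty\to\infty}\leq 1/2$ for $\lambda\geq\Lambda_0$; the Neumann series gives $\|(I-\mathcal{B}_\lambda)^{-1}\|_{\infty\to\infty}\leq 2$ there. On $(0,\Lambda_0]$, I use that $\lambda\mapsto\mathcal{B}_\lambda$ is continuous in the operator norm of $L^\infty$ on $[0,\Lambda_0]$ (via the resolvent identity, with non-parabolicity used to extend to $\lambda=0$ by estimating $\mathcal{H}^{-1}\mathcal{W}_0$ along the lines of Lemma~\ref{L7}), and that $I-\mathcal{B}_\lambda$ is invertible on $L^\infty$ for each such $\lambda$ by the Fredholm alternative (a nontrivial kernel would give, via $v=\mathcal{W}_0^{1/2}u$, an $L^2$ fixed point of $\mathcal{A}_\lambda$, forbidden by the previous step). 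Hence $(I-\mathcal{B}_\lambda)^{-1}$ is norm-continuous on the compact interval $[0,\Lambda_0]$ and so uniformly bounded there. The main obstacle is this last uniformity step: spectral radius strictly less than one on a Banach space does not by itself give a uniform bound on $(I-\mathcal{B}_\lambda)^{-1}$, and it is the combination of compactness of $\mathcal{B}_\lambda$ (to invoke Fredholm) with norm-continuity of the resolvent family down to $\lambda=0$ that converts the spectral estimate into the required uniform operator bound.
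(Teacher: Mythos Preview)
Your approach is essentially the paper's: compactness and norm-continuity of $\mathcal{B}_\lambda$ on $L^\infty$ (Lemma~\ref{L1}), the spectral link $\sigma_\infty(\mathcal{B}_\lambda)\setminus\{0\}\subset\sigma_2(\mathcal{A}_\lambda)$ (Lemma~\ref{L5}), the bound $\|\mathcal{A}_\lambda\|\le\|\mathcal{A}_0\|<1$ (Lemma~\ref{L4}), Neumann series for large $\lambda$, and a continuity argument on $[0,\Lambda_0]$. Your final uniformity step, using continuity of inversion in the Banach algebra $\mathcal{L}(L^\infty)$, is a clean variant of the paper's explicit locally-uniform Neumann series.

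Two points need tightening. First, your argument for $\|\mathcal{A}_0\|<1$ only works as written when $\alpha=1$: if $\mathcal{A}_0\phi=\alpha\phi$ and $\omega=\alpha^{-1}\mathcal{H}^{-1}\mathcal{W}_0^{1/2}\phi$, then $\mathcal{H}\omega=\alpha^{-1}\mathcal{W}_0^{1/2}\phi$ while $\mathcal{W}_0\omega=\mathcal{W}_0^{1/2}\phi$, so $\mathcal{L}\omega=(\alpha^{-1}-1)\mathcal{W}_0^{1/2}\phi\neq 0$ for $\alpha>1$. You must first use the non-negativity of $\mathcal{L}$ to get $\|\mathcal{A}_0\|\le 1$ (as in Lemma~\ref{L4c}); compactness then forces the top eigenvalue, if equal to the norm $1$, to be exactly $1$, and your construction applies. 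Second, compactness of $\mathcal{B}_\lambda$ on $L^\infty$ needs more than local H\"older regularity plus Arzel\`a--Ascoli: the image sections live on all of $M$, not on a compact set, so you also need that $\|(\mathcal{H}+\lambda)^{-1}(x,y)\|_{y,x}\to 0$ as $x\to\infty$, uniformly for $y\in K_0$. The paper handles this by splitting $\mathcal{B}_\lambda=\mathcal{T}_\lambda+\mathcal{S}_\lambda$ into a near-diagonal piece (your argument) and an off-diagonal piece with continuous kernel decaying at infinity (Lemma~\ref{C1}).
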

To be able to prove Proposition \ref{key} first we have to discuss a few preliminary observations. For $\lambda\geq 0$, we introduce the two operators

$$\mathcal{A}_\lambda=\mathcal{W}_0^{1/2}(\mathcal{H}+\lambda)^{-1}\mathcal{W}_0^{1/2}$$
and

$$\mathcal{B}_\lambda=(\mathcal{H}+\lambda)^{-1}\mathcal{W}_0.$$
There are two main ideas for the proof of Proposition \ref{key}: the first one is that the $L^\infty$ spectrum of $\mathcal{B}_\lambda$ is related to the $L^2$ spectrum of $\mathcal{A}_\lambda$. The second one is that the spectral radius of $\mathcal{A}_\lambda$ being strictly less than $1$ is equivalent to the strong subcriticality of $\mathcal{L}$, which will be shown to be equivalent to the more geometric condition $\mathrm{Ker}_{H_0^1}(\mathcal{L})=\{0\}$. The first idea goes back to the seminal work of B. Simon on Schr\"{o}dinger operators in $\R^n$ (see \cite[Lemma 3.9]{S}), whereas the second one appears in \cite{D2}, for manifolds satisfying a global Sobolev inequality.\\

We now present a series of lemmas, which will be used in the proof of Proposition \ref{key}.

\begin{Lem}\label{L6} Assume that $M$  satisfies \eqref{d}, \eqref{UE} and \eqref{vol1}, and that $\mathcal{R}_-$ satisfies condition 
\eqref{condik}. Let $\mathcal{H}$ be defined by \eqref{defh}.  Then the kernel $(\mathcal{H}+\lambda)^{-1}(x,y)$ converges uniformly as $\lambda\to 0_+$ to $\mathcal{H}^{-1}(x,y)$ on compact sets of $M^2\setminus \mathrm{Diag}(M)$,
where $\mathrm{Diag}(M):=\{(x,x); x\in M\}$.
\end{Lem}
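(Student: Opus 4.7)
The plan is to write both $\mathcal{H}^{-1}(x,y)$ and $(\mathcal{H}+\lambda)^{-1}(x,y)$ as time integrals of the heat kernel $p_t^{\mathcal{H}}(x,y)$ and use the Gaussian bound furnished by Proposition \ref{goh} to control the difference uniformly on a fixed compact set $K\subset M^2\setminus\mathrm{Diag}(M)$. By the spectral theorem, for $\lambda>0$,
\[
(\mathcal{H}+\lambda)^{-1}(x,y)=\int_0^{+\infty} e^{-\lambda t}\,p_t^{\mathcal{H}}(x,y)\,dt,\qquad \mathcal{H}^{-1}(x,y)=\int_0^{+\infty} p_t^{\mathcal{H}}(x,y)\,dt,
\]
the second integral converging (at $\infty$) thanks to \eqref{vol1}, \eqref{d}, and the Gaussian estimate $(U\!E_{\mathcal{H}})$ from Proposition \ref{goh}, which gives $\|p_t^{\mathcal{H}}(x,y)\|_{y,x}\lesssim V(x,\sqrt{t})^{-1}\exp(-d^2(x,y)/Ct)$.

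The first step is to fix $(x,y)\in K$, set $\delta:=\inf_{(x,y)\in K}d(x,y)>0$, and pick a compact $K'\subset M$ containing the projections of $K$. For such $(x,y)$ and any $T\geq 1$, I would split
\[
\bigl\|\mathcal{H}^{-1}(x,y)-(\mathcal{H}+\lambda)^{-1}(x,y)\bigr\|_{y,x}\leq \int_0^T (1-e^{-\lambda t})\|p_t^{\mathcal{H}}(x,y)\|_{y,x}\,dt+\int_T^{+\infty}\|p_t^{\mathcal{H}}(x,y)\|_{y,x}\,dt.
\]
For the second piece, the Gaussian upper bound together with \eqref{d} (which gives $V(x,\sqrt{t})\gtrsim V(x_0,\sqrt{t})$ uniformly for $x$ in the compact set $K'$) and \eqref{vol1} shows that
\[
\sup_{(x,y)\in K}\int_T^{+\infty}\|p_t^{\mathcal{H}}(x,y)\|_{y,x}\,dt\lesssim \int_T^{+\infty}\frac{dt}{V(x_0,\sqrt{t})}\xrightarrow[T\to\infty]{}0.
\]

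For the first piece, I would use $0\leq 1-e^{-\lambda t}\leq \lambda t$, so it is bounded by
\[
\lambda\int_0^T t\,\|p_t^{\mathcal{H}}(x,y)\|_{y,x}\,dt\lesssim \lambda\int_0^T \frac{t}{V(x,\sqrt{t})}\exp\!\left(-\frac{d^2(x,y)}{Ct}\right)dt.
\]
Here the crucial point is that on $K$ we have $d(x,y)\geq\delta>0$, so the Gaussian factor $\exp(-\delta^2/(Ct))$ kills the singularity of $1/V(x,\sqrt{t})$ at $t=0^+$ (recall $V(x,r)\gtrsim r^{\nu'}$ is not needed; we only need local integrability in $t$, which the exponential factor provides), and one obtains a bound of the form $C(K)\,\lambda\, T$, hence a quantity that tends to zero as $\lambda\to 0$, once $T$ is held fixed.

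Given $\varepsilon>0$, choose $T=T(\varepsilon)$ large enough so that the tail is $\leq\varepsilon/2$ uniformly on $K$, then choose $\lambda_0$ small enough so that the remaining $[0,T]$ contribution is $\leq\varepsilon/2$ uniformly on $K$ for $\lambda\in(0,\lambda_0)$; this yields the claimed uniform convergence. I do not expect any essential obstacle here: the only mild subtlety is confirming the uniformity of the Gaussian constants and of $V(x,\sqrt{t})$ as $(x,y)$ ranges over $K$, which is immediate from \eqref{d} and the compactness of the projection of $K$.
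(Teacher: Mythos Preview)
Your proof is correct and follows essentially the same strategy as the paper's: write the difference as $\int_0^{+\infty}(1-e^{-\lambda t})\,p_t^{\mathcal{H}}(x,y)\,dt$, use the Gaussian estimate from Proposition~\ref{goh}, handle small $t$ via the off-diagonal exponential factor (with uniformity from compactness of the projection of $K$ and doubling), and handle large $t$ via \eqref{vol1} together with \eqref{d} to compare $V(x,\sqrt{t})$ to $V(x_0,\sqrt{t})$. The only cosmetic difference is that the paper splits at $t=1$ and invokes dominated convergence on each piece, whereas you split at a variable $T$ and run an explicit $\varepsilon/2$ argument; these are equivalent. One tiny remark: your bound on the $[0,T]$ piece is not literally $C(K)\,\lambda\,T$ (the $T$-dependence could be worse, e.g.\ quadratic), but since you only need it to vanish as $\lambda\to 0$ for \emph{fixed} $T$, this does not affect the argument.
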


\noindent{\em Proof :} Notice first that 

$$\int_1^\infty \frac{dt}{V(x_0,\sqrt{t})}\geq \int_1^\infty \frac{dt}{t^{\nu/2}},$$
so \eqref{vol1} implies $\nu>2$. Let $\tilde K$ be a compact set of $M^2\setminus \mathrm{Diag}(M)$, and denote
$$d(\tilde K,\mathrm{Diag}(M))=\mu>0.$$
It is straightforward to check that $d(x,y)\geq \mu$ if $(x,y)\in \tilde K$.
Write
$$\mathcal{H}^{-1}(x,y)-(\mathcal{H}+\lambda)^{-1}(x,y)=\int_0^{+\infty} \left(1-e^{-\lambda t}\right)e^{-t\mathcal{H}}(x,y)\,dt,$$
By Proposition \ref{goh},
\begin{equation}\label{PSL}
\|e^{-t\mathcal{H}}(x,y)\|_{y,x}\leq \frac{C}{V(x,\sqrt{t})}e^{-\mu^2/Ct},
\end{equation}
for $(x,y)\in \tilde K$ and $t>0$.
But by \eqref{dnu},
$$V(x,\sqrt{t})\geq ct^{\nu/2}V(x,1)\mbox{ if }t\leq 1.$$
Let $\bar{K}$ be the projection of $\tilde K$ on the first coordinate.
Since $\bar{K}$ is compact, there is a constant $c>0$ such that if $(x,y)\in \tilde {K}$ then
$$V(x,1)\geq c,$$
therefore
$$V(x,\sqrt{t})\geq c't^{\nu/2}\mbox{ if }t\leq 1.$$
Now, for all $(x,y)\in \tilde K$,
\begin{eqnarray*}
\left\|\int_0^1 \left(1-e^{-\lambda t}\right)e^{-t\mathcal{H}}(x,y)\,dt\right\|_{y,x}&\le& \int_0^1 \left(1-e^{-\lambda t}\right)\|e^{-t\mathcal{H}}(x,y)\|_{y,x} \,dt\\
&\le& \int_0^1  \left(1-e^{-\lambda t}\right)t^{-\nu/2}e^{-\mu^2/Ct}dt<{+\infty},
\end{eqnarray*}
which   converges to $0$ as $\lambda\to0^+$ by dominated convergence since $$\int_0^1 t^{-\nu/2}e^{-\mu^2/Ct}dt<{+\infty}.$$
Now for the integral from $1$ to ${+\infty}$.  By  \eqref{PSL},
$$\|e^{-t\mathcal{H}}(x,y)\|_{y,x}\leq \frac{C}{V(x,\sqrt{t})}$$
 for $(x,y)\in \tilde K$ and $t \geq 1$. Fix $x_0\in M$. By \eqref{d} there exists   $C_{\bar{K},x_0}$  such that 
$$\sup_{x\in \bar{K}}\frac{1}{V(x,\sqrt{t})} \le \frac{C_{\bar{K},x_0}}{V(x_0,\sqrt{t})},$$
hence for all $(x,y)\in \tilde K$
$$\int_1^{+\infty} \left(1-e^{-\lambda t}\right)\|e^{-t\mathcal{H}}(x,y)\|_{y,x}\,dt \le C'_{\bar{K},x_0} 
\int_1^{+\infty}  \left(1-e^{-\lambda t}\right)\frac{1}{V(x_0,\sqrt{t})}\,dt,
$$
which converges to $0$ as $\lambda\to0^+$ by dominated convergence thanks to condition \eqref{vol1}.
 It shows that
 $$\|\mathcal{H}^{-1}(x,y)-(\mathcal{H}+\lambda)^{-1}(x,y)\|_{y,x}\to 0$$
 as $\lambda\to0^+$, uniformly for $(x,y)\in\tilde{K}$ and concludes the proof of Lemma \ref{L6}.

\cqfd

The following lemma is crucial for our study. It will be used first in the proof of Lemma \ref{L5}, and again directly in the proof of Proposition \ref{key}.

\begin{Lem}\label{L1}
Under the assumptions of Lemma $\ref{L6}$, for any $\lambda\in [0,{+\infty})$, $\mathcal{B}_\lambda$ is compact on $L^\infty$, 
$\sup_{\lambda\geq 0}\|\mathcal{B}_\lambda\|_{\infty\to \infty}<+\infty,$
and the map $\lambda\mapsto \mathcal{B}_\lambda\in \mathcal{L}(L^\infty,L^\infty)$ is continuous on $[0,\infty)$. 
\end{Lem}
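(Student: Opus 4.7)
My plan for Lemma \ref{L1} rests on a single pointwise bound that drives all three assertions. Writing the resolvent as a Laplace transform of the heat semigroup gives
$$\|(\mathcal{H}+\lambda)^{-1}(x,y)\|_{y,x} \leq \int_0^\infty e^{-\lambda t}\|e^{-t\mathcal{H}}(x,y)\|_{y,x}\,dt \leq \|\mathcal{H}^{-1}(x,y)\|_{y,x}$$
for every $\lambda \geq 0$, and Proposition \ref{goh} controls the right-hand side by the Gaussian-type Green quantity $\int_{d^2(x,y)}^\infty V(x,\sqrt{t})^{-1}\,dt$. For the uniform boundedness on $L^\infty$, we use that $\mathcal{W}_0 = \mathbf{1}_{K_0}\mathcal{R}_-$ is bounded and compactly supported, so
$$\|\mathcal{B}_\lambda\|_{\infty\to\infty} \leq \|\mathcal{W}_0\|_\infty \sup_{x\in M}\int_{K_0}\|\mathcal{H}^{-1}(x,y)\|_{y,x}\,d\mu(y).$$
For $x$ in a bounded neighborhood of $K_0$ this supremum is finite by the local $L^1$-integrability of the above Green kernel; for $x$ far from $K_0$ the off-diagonal Gaussian decay together with $(V^\infty)$ makes the integral go to zero. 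This settles the uniform bound in $\lambda$.

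For the compactness of $\mathcal{B}_\lambda$ on $L^\infty$, fix $\lambda \geq 0$ and take any bounded sequence $(\omega_n)$ in $L^\infty(E)$. The sections $u_n := \mathcal{B}_\lambda\omega_n$ satisfy $(\mathcal{H}+\lambda)u_n = \mathcal{W}_0\omega_n$, whose right-hand side is uniformly bounded in $L^\infty$ and supported in the compact set $K_0$. Local $C^{1,\alpha}$-elliptic regularity for the second-order operator $\mathcal{H}+\lambda$, applied in trivializations of $E$, yields uniform $C^{1,\alpha}_{\loc}$ bounds; in particular the $u_n$ are equicontinuous on each compact. On the other hand the pointwise bound of the previous paragraph shows that $|u_n(x)|$ tends to zero as $d(x,K_0)\to\infty$, uniformly in $n$. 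Arzelà-Ascoli on the one-point compactification of $M$ (applied in the reference trivializations) extracts a subsequence converging uniformly, hence in $L^\infty$.

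For continuity of $\lambda \mapsto \mathcal{B}_\lambda$, I would write
$$\mathcal{B}_\lambda - \mathcal{B}_{\lambda_0} = \int_0^\infty\!\left(e^{-\lambda t}-e^{-\lambda_0 t}\right) e^{-t\mathcal{H}}\mathcal{W}_0\,dt$$
and verify that $t\mapsto \|e^{-t\mathcal{H}}\mathcal{W}_0\|_{\infty\to\infty}$ is integrable on $(0,\infty)$. For $t\in(0,1]$ the Gaussian bound gives $\|e^{-t\mathcal{H}}\|_{\infty\to\infty}\le C$, so the contribution is $\lesssim \|\mathcal{W}_0\|_\infty$. For $t \geq 1$, Lemma \ref{H_op} provides the weighted bound $\|e^{-t\mathcal{H}} V_{\sqrt{t}}\|_{1\to\infty} \leq C$, which, combined with the fact that $\mathcal{W}_0$ is supported in the compact set $K_0$ (so $V(y,\sqrt{t})\gtrsim V(y_0,\sqrt{t})$ uniformly on $K_0$ for a fixed $y_0$), yields $\|e^{-t\mathcal{H}}\mathcal{W}_0\|_{\infty\to\infty}\lesssim V(y_0,\sqrt{t})^{-1}$, which is integrable in $t$ by $(V^\infty)$. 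Dominated convergence then concludes $\|\mathcal{B}_\lambda - \mathcal{B}_{\lambda_0}\|_{\infty\to\infty}\to 0$.

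The delicate step is the compactness argument, because $L^\infty$ is non-separable and norm-compactness cannot be handled by weak arguments. The crux is that $\mathcal{W}_0$ being compactly supported forces $\mathcal{B}_\lambda$ to land inside $C_0(M;E)$ with a uniform modulus of continuity on compacts and uniform decay at infinity; one then gets compactness from Arzelà-Ascoli applied on the one-point compactification. The continuity estimate, in turn, depends on the resolvent estimates from Lemma \ref{H_op} rather than the raw Gaussian estimates of $e^{-t\mathcal{H}}$ alone, since one needs the integrated norm $\int_0^\infty\|e^{-t\mathcal{H}}\mathcal{W}_0\|_{\infty\to\infty}\,dt$ and not just pointwise heat-kernel control.
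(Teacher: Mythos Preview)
Your proposal is correct and uses the same essential ingredients as the paper --- the Gaussian bounds for $e^{-t\mathcal{H}}$ from Proposition~\ref{goh}, local elliptic (H\"older) regularity, Arzel\`a--Ascoli, and dominated convergence --- but the organisation is more streamlined. The paper decomposes $\mathcal{B}_\lambda=\mathcal{T}_\lambda+\mathcal{S}_\lambda$ via a cutoff $\chi$ supported near $K_0$: for the near-diagonal piece $\mathcal{T}_\lambda=\chi(\mathcal{H}+\lambda)^{-1}\mathcal{W}_0$ it proves compactness by local H\"older estimates plus Arzel\`a--Ascoli, and continuity by dominated convergence on $\int_0^\infty\|\chi e^{-t\mathcal{H}}\mathcal{W}_0\|_{\infty\to\infty}\,dt$; for the off-diagonal piece $\mathcal{S}_\lambda$ it invokes a kernel-based compactness criterion (Lemma~\ref{C1}) and, for continuity at $\lambda=0$, the uniform kernel convergence of Lemma~\ref{L6}; continuity at $\lambda_0>0$ is then handled separately through the resolvent identity. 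Your observation that $\int_0^\infty\|e^{-t\mathcal{H}}\mathcal{W}_0\|_{\infty\to\infty}\,dt<+\infty$ holds \emph{without} localisation (using $(EV_{1,\infty})$ from Lemma~\ref{H_op} for $t\ge 1$) eliminates the need for the $\chi$-split and for the separate treatment of $\lambda_0=0$ versus $\lambda_0>0$, and your single Arzel\`a--Ascoli argument on the one-point compactification fuses the two compactness mechanisms into one step. The trade-off is that the paper's decomposition makes the role of Lemma~\ref{L6} explicit, whereas in your route that lemma is no longer needed for this proof.
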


For the proof of Lemma \ref{L1}, we shall use Lemma \ref{L6} as well as the following compactness criterion, which follows from the Arzel\`a-Ascoli theorem (see \cite[Corollary 5.1]{E}):


\begin{Lem}\label{C1}

Let $T$ be an integral operator with continuous kernel $k(x,y)$. Assume that there exists a compact subset $K$ of $M$ such that $k(x,y)$ is supported in $M\times K$, and

$$\lim_{R\to \infty}\sup_{x\in M\setminus B(x_0,R)}\int_K|k(x,y)|\,d\mu(y)=0,$$
for some $x_0\in M$. Then $T$ is compact on $L^\infty$.

\end{Lem}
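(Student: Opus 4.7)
\medskip

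\noindent\textbf{Proof plan for Lemma \ref{C1}.} The goal is to show that if $\{f_n\}\subset L^\infty(M,\mu)$ is a bounded sequence, then $\{Tf_n\}$ admits a subsequence converging in $L^\infty(M,\mu)$. Since the integral kernel $k(x,y)$ is supported in $M\times K$ with $K$ compact and is continuous, each $Tf_n$ is automatically a continuous (and bounded) function on $M$, so convergence in $L^\infty$ will reduce to uniform convergence on $M$. The proof will combine two ingredients: the Arzel\`{a}--Ascoli theorem on any closed ball $\overline{B(x_0,R)}$, and the decay hypothesis, which controls the tail $M\setminus B(x_0,R)$ uniformly.

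First I would record two a priori estimates. Setting $\Phi(x):=\int_K |k(x,y)|\,d\mu(y)$, the decay hypothesis gives $\Phi(x)\to 0$ as $d(x,x_0)\to\infty$; moreover, since $k$ is continuous on the compact set $\overline{B(x_0,R)}\times K$, $\Phi$ is bounded on $\overline{B(x_0,R)}$ for every $R$. Combined with the tail bound this shows $\sup_{x\in M}\Phi(x)=:C_0<\infty$, and therefore $\|Tf\|_\infty\le C_0\|f\|_\infty$ for every $f\in L^\infty$. In particular the family $\{Tf_n\}$ is uniformly bounded on $M$. For equicontinuity on a fixed ball $\overline{B(x_0,R)}$, I use that $k$ is uniformly continuous on the compact set $\overline{B(x_0,R)}\times K$: given $\varepsilon>0$ there is $\delta>0$ such that $|k(x,y)-k(x',y)|<\varepsilon$ whenever $x,x'\in \overline{B(x_0,R)}$ with $d(x,x')<\delta$ and $y\in K$. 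Hence
\[
|Tf_n(x)-Tf_n(x')|\le \varepsilon\,\mu(K)\,\|f_n\|_\infty,
\]
and $\{Tf_n\}$ is equicontinuous on $\overline{B(x_0,R)}$.

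With uniform boundedness and equicontinuity on each closed ball, Arzel\`{a}--Ascoli yields, for every $R>0$, a subsequence of $\{Tf_n\}$ that converges uniformly on $\overline{B(x_0,R)}$. A standard diagonal extraction, applied along an increasing sequence $R_j\uparrow\infty$, produces a single subsequence $\{Tf_{n_j}\}$ which converges uniformly on every $\overline{B(x_0,R)}$. Call the pointwise limit $g$.

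It remains to upgrade local uniform convergence to uniform convergence on all of $M$. Given $\varepsilon>0$, use the decay hypothesis to pick $R$ with $\sup_{x\in M\setminus B(x_0,R)}\Phi(x)<\varepsilon$. Then for every $n$,
\[
\sup_{x\in M\setminus B(x_0,R)}|Tf_n(x)|\le \varepsilon\,\sup_n\|f_n\|_\infty,
\]
and the same bound holds for $g$ by passing to the limit. Combining this tail estimate with the uniform convergence of $\{Tf_{n_j}\}$ on $\overline{B(x_0,R)}$ (which is available for all $j$ large enough), one obtains $\|Tf_{n_j}-g\|_\infty\lesssim \varepsilon$ for $j$ large, which concludes the proof. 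The only delicate point in this argument is the diagonal extraction together with the splitting $M=\overline{B(x_0,R)}\cup (M\setminus B(x_0,R))$; everything else is a direct verification of the Arzel\`{a}--Ascoli hypotheses, so no substantial technical obstacle is expected.
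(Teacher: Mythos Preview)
Your argument is correct. You proceed directly: take a bounded sequence in $L^\infty$, verify the Arzel\`a--Ascoli hypotheses on each closed ball $\overline{B(x_0,R)}$, extract a diagonal subsequence converging locally uniformly, and then use the tail decay of $\Phi(x)=\int_K|k(x,y)|\,d\mu(y)$ to upgrade local uniform convergence to convergence in $L^\infty(M)$. All the steps are sound; in particular your observation that $\Phi$ is bounded on compacta (by continuity of $k$) and small at infinity (by hypothesis), hence globally bounded, is exactly what is needed to make the whole machine run.

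The paper takes a different, more operator-theoretic route: it introduces cutoffs $\eta_R\in C_c(M)$ with $\eta_R\equiv 1$ on $B(x_0,R)$, defines $T_R$ to be the integral operator with kernel $\eta_R(x)k(x,y)$, and observes that the decay hypothesis gives $\|T-T_R\|_{\infty\to\infty}\to 0$. Each $T_R$ has a continuous, compactly supported kernel and is therefore compact on $L^\infty$ by the classical Arzel\`a--Ascoli argument; since the compact operators form a closed subspace in operator norm, $T$ is compact. This avoids the diagonal extraction entirely and packages the ``tail plus core'' splitting at the level of operators rather than sequences. Your approach is slightly more elementary in that it does not invoke the closedness of the compact operators, while the paper's is a bit cleaner to write down and makes the role of the decay hypothesis (namely, approximation in norm by compactly-supported-kernel operators) more transparent. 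Both are standard and essentially equivalent in difficulty.
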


\begin{proof}
Consider a family of functions $\eta_{R} \in C_c(M)$, for all  $R>1$  such that, $0 \le \eta_R(x) \le 1$
for all $x\in M$,  $\eta_R(x)=1 $ for all $x \in B(x_0,R)$ and $\eta_R(x)=0 $ for all $x \notin B(x_0,2R)$. Let $T_R$ be 
 an integral operator with continuous kernel $\eta_R(x)k(x,y)$. By hypothesis the norm $\|T-T_R\|_{\infty\to \infty}$ converges to zero when $R$ goes to infinity. Hence it is enough to show that the operators $T_R$ are  compact for all 
 $R>1$. But every  $T_R$ has 
 continuous, compactly supported kernel. If follows that the kernel is uniformly continuous and  the standard argument based on 
 the Arzel\`a-Ascoli theorem shows that 
 an operator with uniformly continuous 
 kernel is compact on $L^\infty$ (and in fact on any other $L^p$ spaces). See e.g.  \cite[Example 4.1]{Kato}.

\end{proof}

%
%
%
%

\noindent{\em Proof of Lemma $\ref{L1}$: }
Let $K_0$ be the compact set from condition \eqref{condik}. Decompose
$$\mathcal{B}_\lambda=\mathcal{T}_\lambda+\mathcal{S}_\lambda=\chi (\mathcal{H}+\lambda)^{-1}\mathcal{W}_0+(1-\chi)(\mathcal{H}+\lambda)^{-1}\mathcal{W}_0,$$
 where $\chi\in C_0^\infty(M)$ is a cut-off function such that $\chi\equiv1$ on $$K_1:=\{x\,,\,d(x,K_0)\leq1\}.$$ The reason why we introduce $\mathcal{T}_\lambda$ is the following: the kernel of $\mathcal{B}_\lambda$ is $C^{1,\alpha}_{loc}$, $\alpha\in (0,1)$, outside  the diagonal, and singular on the diagonal. The operator  $\mathcal{T}_\lambda$ captures the ``near-diagonal" part of $\mathcal{B}_\lambda$. \\
 
\noindent \underline{Step 1:} we start by showing that \begin{equation}\label{T_l}
\sup_{\lambda\geq0}\|\mathcal{T}_\lambda\|_{\infty\to \infty}<+\infty,
\end{equation}
$\mathcal{T}_\lambda$ is compact on $L^\infty$ for every $\lambda\geq0$, 
and the map $\lambda\mapsto \mathcal{T}_\lambda\in \mathcal{L}(L^\infty,L^\infty)$ is continuous at $\lambda=0$. Write
$$\mathcal{T}_\lambda=\int_0^{+\infty} e^{-t\lambda}\chi e^{-t\mathcal{H}} \mathcal{W}_0\,dt,$$
so that
$$\|\mathcal{T}_\lambda\|_{\infty\to \infty}\leq \int_0^{+\infty} \| \chi e^{-t\mathcal{H}} \mathcal{W}_0\|_{\infty\to \infty}\,dt.$$
We split the integral into $\int_0^1+\int_1^{+\infty}$, and estimate both terms. Since by Proposition \ref{goh}, $\|e^{-t\mathcal{H}}\|_{\infty\to \infty}\leq C$, so that
$$\int_0^1 \|\chi e^{-t\mathcal{H}}\mathcal{W}_0\|_{\infty\to \infty}\,dt<+\infty.$$
Also,
\begin{eqnarray*}
\int_1^{+\infty} &\|\chi e^{-t\mathcal{H}}\mathcal{W}_0\|_{\infty\to \infty}\,dt  \\
&\leq &\int_1^{+\infty} \|\chi\|_{\infty}\|e^{-t\mathcal{H}}\|_{L^1(K_0)\to L^{\infty}(K_1)}\,dt \times \int_{K_0} \|\mathcal{W}_0(x)\|_x\,d\mu(x).
\end{eqnarray*}
%
But 
$\int_{K_0} \|\mathcal{W}_0(x)\|_x\,d\mu(x)$ is finite since $\mathcal{W}_0\in L^\infty$.
Write
$$\|e^{-t\mathcal{H}}\|_{L^1(K_0)\to L^\infty(K_1)}=\sup_{x\in K_1,y\in K_0} \|e^{-t\mathcal{H}}(x,y)\|_{y,x}.$$
As in Lemma \ref{L6} 
$$\int_{1}^{+\infty} \sup_{x\in K_1,y\in K_0} \|e^{-t\mathcal{H}}(x,y)\|_{y,x}\,dt \leq C_{K_0,K_1} \int_{1}^{+\infty} \frac{1}{V(x_0,\sqrt{t})}\, dt< {+\infty}, $$
where $x_0\in K_1$ is a point that is fixed.
%
%
%
%
Therefore 

$$\int_1^{+\infty} \|\chi e^{-t\mathcal{H}}\mathcal{W}_0\|_{\infty \to \infty}\,dt<{+\infty}.$$
Consequently,

$$\sup_{\lambda\geq0}\|\mathcal{T}_\lambda\|_{\infty\to \infty}<{+\infty}.$$
%

Let us prove now that the map $\lambda\mapsto \mathcal{T}_\lambda\in \mathcal{L}(L^\infty,L^\infty)$ is continuous for all  $\lambda \in [0, {+\infty})$. We have

$$\|\mathcal{T}_{\lambda'}-\mathcal{T}_{\lambda''}\|_{\infty\to \infty}\leq \int_0^{+\infty} 
(e^{-\lambda' t}-e^{-\lambda'' t}) \|\chi e^{-t\mathcal{H}}\mathcal{W}_0\|_{\infty\to \infty}\,dt.$$
We have just seen that $\int_0^{+\infty} \|\chi e^{-t\mathcal{H}}\mathcal{W}_0\|_{\infty\to \infty}\,dt<+\infty$. It follows by the dominated convergence theorem that 
$$\lim_{\lambda'\to \lambda''}\|\mathcal{T}_{\lambda'}-\mathcal{T}_{\lambda''}\|_{\infty\to \infty}=0.$$
Let us now show that $\mathcal{T}_\lambda$ is compact on $L^\infty$. 

%
%
%

 Using local H\"{o}lder regularity estimates (see e.g. \cite[Theorem 8.24]{GT}) for solutions of $(\mathcal{H}+\lambda)u=\mathcal{W}_0 f$ and the fact that $(\mathcal{H}+\lambda)^{-1}\mathcal{W}_0$ sends $L^\infty$ to $L^\infty_{loc}$ (by the arguments in Step 1), one obtains in a similar way as above:
$$\|\mathcal{T}_\lambda f\|_{C^{\alpha}}\leq C\|f\|_\infty.$$
By the Arzel\`a-Ascoli theorem the embedding $C^{\alpha}_{loc}\hookrightarrow L^\infty$ is compact, and thus $\mathcal{T}_\lambda$ is compact on $L^\infty$. This concludes the first part of the proof of Lemma \ref{L1}.\\

\noindent \underline{Step 2:} let us now turn to $\mathcal{S}_\lambda$, and show that $\mathcal{S}_\lambda$ is a compact operator on $L^\infty$, that

$$\sup_{\lambda\geq0}\|\mathcal{S}_\lambda\|_{\infty\to \infty}<{+\infty},$$
and that the map $\lambda\mapsto \mathcal{S}_\lambda$ is continuous at $\lambda=0$. 
By Proposition \ref{goh}, the heat kernel of $\mathcal{H}$ has Gaussian estimates. Using the expression $(\mathcal{H}+\lambda)^{-1}=\int_0^{+\infty} e^{-\lambda t}e^{-t\mathcal{H}}\,dt$, it follows that the kernel of $(\mathcal{H}+\lambda)^{-1}$ can be estimated by:
$$||(\mathcal{H}+\lambda)^{-1}(x,y)||_{y,x}\leq C\int_{d^2(x,y)}^{+\infty}\frac{dt}{V(y,\sqrt{t})},$$
where $C$ is independent of $\lambda$. Now, 
$S_\lambda(x,y)=0$ unless $y\in K_0$ and $x\not\in K_1$, in which case $d(x,y)\geq 1$.
Thus
$$\|S_\lambda(x,y)\|_{y,x}\leq C\int_{1}^{+\infty}\frac{dt}{V(y,\sqrt{t})}\leq C'\int_{1}^{+\infty}\frac{dt}{V(y_0,\sqrt{t})},$$
where $y_0\in K_0$ is fixed. By \ref{vol1}, the above integral is finite.
%
Finally
$$||S_\lambda||_{\infty\to \infty}\leq C$$
for some constant $C$ independent of $\lambda$.

Also, since
$$\lim_{x\to \infty} ||S_\lambda(x,y)||_{y,x}=0,$$
uniformly for $y\in K_0$, and since by elliptic regularity $S_\lambda(x,y)$ is $C^{1,\alpha}$ (since it has support outside of the diagonal), Lemma \ref{C1} implies that $\mathcal{S}_\lambda$ is compact on $L^\infty$. Let us show that $\lambda\mapsto \mathcal{S}_\lambda$ is continuous at $\lambda=0$. First, notice that for every $\varepsilon>0$, there exists $K_0\Subset K_{\varepsilon} \Subset M$ such that for every $\lambda\geq0$,

$$\|\chi_{M\setminus K_\varepsilon}\mathcal{S}_\lambda\|_{\infty\to \infty} \leq \varepsilon.$$
Together with the fact that the kernel $(\mathcal{H}+\lambda)^{-1}(x,y)$ converges to $\mathcal{H}^{-1}(x,y)$ uniformly on compact sets of $M^2\setminus\mathrm{Diag}(M)$ (see Lemma \ref{L6}), this implies that $\lambda\mapsto \mathcal{S}_\lambda\in\mathcal{L}(L^\infty,L^\infty)$ is continuous at $\lambda=0$.\\

\noindent \underline{Step 3:} finally, let us prove the continuity of $\lambda\mapsto \mathcal{B}_\lambda\in\mathcal{L}(L^\infty,L^\infty)$ at a point $\lambda_0>0$. Write

$$\|(\mathcal{H}+\lambda_0)^{-1}\mathcal{W}_0-(\mathcal{H}+\lambda)^{-1}\mathcal{W}_0\|_{\infty\to \infty}=|\lambda-\lambda_0|\,\|(\mathcal{H}+\lambda)^{-1}(\mathcal{H}+\lambda_0)^{-1}\mathcal{W}_0\|_{\infty\to \infty}.$$
Take $|\lambda-\lambda_0|\leq \frac{|\lambda_0|}{2}$, then 

$$\|(\mathcal{H}+\lambda)^{-1}\|_{\infty\to \infty}\leq \frac{C}{\lambda}\leq \frac{2C}{\lambda_0}.$$
Since $\|(\mathcal{H}+\lambda_0)^{-1}\mathcal{W}_0\|_{\infty\to \infty}\leq C$, we get that for $\lambda$ close enough to $\lambda_0$,

$$\|(\mathcal{H}+\lambda)^{-1}(\mathcal{H}+\lambda_0)^{-1}\mathcal{W}_0\|_{\infty\to \infty}\leq C.$$
Thus,

$$\lim_{\lambda\to\lambda_0}\|(\mathcal{H}+\lambda_0)^{-1}\mathcal{W}_0-(\mathcal{H}+\lambda)^{-1}\mathcal{W}_0\|_{\infty\to \infty}=0,$$
which shows the claim.\\

Let us now present a lemma which is an adaptation of \cite[Corollary 3.1, Proposition 3.5]{D0} and of the proof of \cite[Theorem 3.1]{D0} to the case of generalised Schr\"{o}dinger operators acting on sections of a vector bundle:

\begin{Lem}\label{L4b}

Assume that $M$ is non-parabolic. Then, for all $\lambda\geq0$, the operators $(\mathcal{H}+\lambda)^{-1/2}\mathcal{W}_0$ and $\mathcal{W}_0(\mathcal{H}+\lambda)^{-1/2}$ are compact on $L^2$, and adjoint to each other. 

\end{Lem}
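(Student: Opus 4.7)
The plan is to adapt the strategy of \cite[Corollary 3.1, Proposition 3.5]{D0} to the vector bundle setting. Since $\mathcal{W}_0 = \mathbf{1}_{K_0}\mathcal{R}_-$ is a pointwise symmetric, compactly supported $L^\infty$-section of $\mathrm{End}(E)$ (as $K_0$ is compact and $\mathcal{R}_- \in L^\infty_{\mathrm{loc}}$), the multiplication operator $\mathcal{W}_0$ is bounded and self-adjoint on $L^2$. Moreover, by Proposition \ref{goh} the operator $\mathcal{H}$ is non-negative and $\mathrm{Ker}_{L^2}(\mathcal{H})=\{0\}$, so $(\mathcal{H}+\lambda)^{-1/2}$ is a well-defined self-adjoint operator on $L^2$ for every $\lambda \geq 0$, and the key bijective isometry $\mathcal{H}^{-1/2}\colon L^2 \to H_0^1$ is available.

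First I would treat the case $\lambda=0$ and prove boundedness of $\mathcal{W}_0\mathcal{H}^{-1/2}$ on $L^2$. For $u\in L^2$, setting $\omega=\mathcal{H}^{-1/2}u\in H_0^1$ with $Q_\mathcal{H}(\omega)=\|u\|_2^2$, we have
\[
\|\mathcal{W}_0\omega\|_2^2 = \int_{K_0}\|\mathcal{W}_0(x)\omega(x)\|^2\,d\mu \leq \|\mathcal{W}_0\|_\infty^2\int_{K_0}|\omega|^2\,d\mu.
\]
Using the non-parabolicity inequality \eqref{vNP}, $\int_M \rho|\omega|^2\leq Q_\mathcal{H}(\omega)$, together with the fact that $\rho$ (chosen continuous and strictly positive as in \cite{PT}) admits a positive infimum on the compact set $K_0$, I can bound $\int_{K_0}|\omega|^2 \leq C\,Q_\mathcal{H}(\omega)=C\|u\|_2^2$, giving boundedness.

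For compactness, I would exploit the estimate \eqref{eps-pos}, namely $Q_\mathcal{H}(\omega)\geq \varepsilon\int_M|\nabla\omega|^2$. Pick a bounded open neighbourhood $U\Supset K_0$. If $(u_n)$ is bounded in $L^2$, then the corresponding $\omega_n=\mathcal{H}^{-1/2}u_n$ satisfy a uniform bound on $\int_U|\nabla\omega_n|^2 + \int_U|\omega_n|^2$; that is, $(\omega_n)$ is bounded in $W^{1,2}(U,E)$. Applying the Rellich--Kondrachov theorem in the local trivializations of $E$ over $\overline{U}$ (a finite cover suffices since $\overline{U}$ is compact) yields a subsequence converging in $L^2(U,E)$. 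Multiplication by $\mathcal{W}_0\in L^\infty$ supported in $K_0\subset U$ then gives convergence of $\mathcal{W}_0\omega_{n_k}$ in $L^2(M,E)$, proving compactness of $\mathcal{W}_0\mathcal{H}^{-1/2}$.

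Passing from $\lambda=0$ to general $\lambda>0$ is just functional calculus: since $(\mathcal{H}+\lambda)^{-1/2}$ and $\mathcal{H}^{-1/2}$ both commute and are Borel functions of $\mathcal{H}$,
\[
(\mathcal{H}+\lambda)^{-1/2}=\mathcal{H}^{-1/2}\cdot \mathcal{H}^{1/2}(\mathcal{H}+\lambda)^{-1/2},
\]
and the second factor is bounded on $L^2$ since $x^{1/2}(x+\lambda)^{-1/2}\leq 1$ on $[0,\infty)$. Hence $\mathcal{W}_0(\mathcal{H}+\lambda)^{-1/2}$ is the composition of a compact operator with a bounded one, and remains compact. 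Finally, since $\mathcal{W}_0$ and $(\mathcal{H}+\lambda)^{-1/2}$ are both self-adjoint on $L^2$, taking adjoints yields $\bigl(\mathcal{W}_0(\mathcal{H}+\lambda)^{-1/2}\bigr)^* = (\mathcal{H}+\lambda)^{-1/2}\mathcal{W}_0$, which is therefore also compact. The main obstacle here is the Rellich-type compactness step: although morally standard, it must be stated carefully for sections of a vector bundle with connection, since the compactness in $L^2(U,E)$ requires that local gradient bounds in $\bar{U}$ control the full $W^{1,2}$ norm in any trivialization, a point which uses only the compactness of $\overline{U}$ and smoothness of the bundle.
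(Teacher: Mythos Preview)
Your proposal is correct and follows essentially the same route as the paper: both arguments establish compactness of $\mathcal{W}_0\mathcal{H}^{-1/2}$ via the isometry $\mathcal{H}^{-1/2}:L^2\to H_0^1$, the embedding $H_0^1\hookrightarrow W^{1,2}_{\mathrm{loc}}$ coming from \eqref{eps-pos} and \eqref{vNP}, and Rellich--Kondrachov on a compact neighbourhood of $K_0$, then pass to $\lambda>0$ by composing with the bounded factor $\mathcal{H}^{1/2}(\mathcal{H}+\lambda)^{-1/2}$ and take adjoints. The paper differs only in emphasis: it spells out in detail why the ``connection'' Sobolev space $\{\omega\in L^2_{\mathrm{loc}}:\nabla\omega\in L^2_{\mathrm{loc}}\}$ coincides locally with the coordinate $W^{1,2}$ (so that Rellich applies), and it flags that for $\lambda=0$ the adjoint identity $(\mathcal{W}_0\mathcal{H}^{-1/2})^*=\mathcal{H}^{-1/2}\mathcal{W}_0$ is not entirely formal since $\mathcal{H}^{-1/2}$ is unbounded, referring to \cite[Proposition 3.4]{D0} for the justification---a point your last paragraph handles correctly for $\lambda>0$ but passes over somewhat quickly at $\lambda=0$.
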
 

\begin{proof}

We only explain how to prove the compactness in $L^2$ of the operator $\mathcal{U}_0= \mathcal{W}_0\mathcal{H}^{-1/2}$. Once this is done, the compactness of $\mathcal{W}_0(\mathcal{H}+\lambda)^{-1/2}$ follows from the formula

$$\mathcal{W}_0(\mathcal{H}+\lambda)^{-1/2}=\mathcal{U}_0\mathcal{H}^{1/2}(\mathcal{H}+\lambda)^{-1/2},$$
and from the fact that $(\mathcal{H}+\lambda)^{-1/2}$ is a contraction on $L^2$, by the spectral theorem. The other claims of the Lemma are then proved following \cite{D0}, using only functional analysis arguments that generalize easily to the case of generalised Schr\"{o}dinger operators, acting on sections of a vector bundle. Let us just mention that the fact that the operators $\mathcal{W}_0\mathcal{H}^{-1/2}$ and $\mathcal{H}^{-1/2}\mathcal{W}_0$ are adjoint one another is {\em not} completely obvious, because it is a non-trivial fact that $\mathcal{H}^{-1/2}\mathcal{W}_0$ is bounded on $L^2$. See \cite[Proposition 3.4]{D0} and the proof of \cite[Theorem 3.1]{D0} for more details. 

Now, let us come back to $\mathcal{U}_0$. First, by definition of $H_0^1$, 

$$\mathcal{H}^{-1/2} : L^2\rightarrow H_0^1$$
is an isometry. Next, recall that by \eqref{eps-pos} and \eqref{vNP}, 

$$H_0^1\hookrightarrow \tilde{W}^{1,2}_{loc},$$
where

$$\tilde{W}^{1,2}_{loc}:=\{\omega\in L^2_{loc}\,;\,\nabla \omega\in L^2_{loc}\}.$$
We claim that 

\begin{equation}\label{Sob}
\tilde{W}^{1,2}_{loc}=W^{1,2}_{loc},
\end{equation}
where $W^{1,2}_{loc}$ is defined as the space of $\omega$ with $\omega\circ \varphi\in W^{1,2}(\R^N)$, for every local trivialization $\varphi:B\times \R^m\subset \R^{n+m}\rightarrow E$ of the vector bundle $E\to M$. Here, $n$ is the dimension of $M$, $m$ is the dimension of a fiber of $E$, and $B$ is the unit ball in $\R^n$. Obviously,  $W^{1,2}_{loc}\subset \tilde{W}^{1,2}_{loc}$. For the converse, let us take a local trivialization $\varphi:B\times \R^m\subset \R^{n+m}\rightarrow E$ of $E\to M$, around a point $p\in M$; in the natural coordinates induced by $\varphi$, every section $\omega$ of $E$ writes $\omega=\sum_{j=1}^m\omega^je_j$, for some basis $e_1,\ldots,e_m$ of $\R^m$, and $\nabla \omega$ writes

$$\nabla \omega=\sum_{k=1}^m(d\omega^k+\sum_{j=1}^m A_j^k\omega^j)e_k,$$
where the matrix $(A_j^k(x))_{1\leq j,k\leq m}$ depends on the connection $\nabla$, and is smooth in $x\in M$. Assuming that $\omega$ is in $\tilde{W}^{1,2}_{loc}$, one has $A_j^k\omega^je_k\in L^2(B)$, and thus $d\omega^j\in L^2(B)$ for all $j$. Thus, $\omega$ is in $W^{1,2}_{loc}$, and we conclude that \eqref{Sob} holds. Therefore, one obtains

$$\mathcal{H}^{-1/2}:L^2\rightarrow H_0^1\hookrightarrow W^{1,2}_{loc}.$$
By the Rellich-Kondrachov theorem, the embedding
$$W^{1,2}(K_0)\hookrightarrow L^2(K_0)$$
is compact, and since $\mathcal{W}_0$ is $L^\infty$ and has (compact) support included in $K_0$, the composition

$$\mathcal{H}^{-1/2}\mathcal{W}_0:L^2\rightarrow L^2(K_0)$$
is compact.

\end{proof}
We will also need the following result, which follows from arguments used in \cite[Definition 6]{D2}, that remain valid even if $M$ does not satisfy a global Sobolev inequality:

\begin{Lem}\label{L4c}

Assume that $M$ is non-parabolic. Then the following are equivalent:

\begin{enumerate}

\item[i)] There exists $\eta>0$ such that

$$||\mathcal{H}^{-1/2}\mathcal{W}_0\mathcal{H}^{-1/2}||_{2\to 2}\leq 1-\eta.$$

\item[ii)] $\mathcal{L}$ is strongly subcritical.

\item[iii)]  
$\mathrm{Ker}_{H_0^1}(\mathcal{L})=\{0\}.$
 
\end{enumerate}

\end{Lem}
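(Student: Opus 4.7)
The plan is to reformulate condition (i) as a variational statement on $H_0^1$ and then read off the three equivalences. Since $\mathcal{H}^{-1/2}:L^2\to H_0^1$ is a bijective isometry (recalled just before the lemma), the substitution $\omega=\mathcal{H}^{-1/2}\psi$ yields
$$\|\mathcal{H}^{-1/2}\mathcal{W}_0\mathcal{H}^{-1/2}\|_{2\to 2}=\sup_{\omega\in H_0^1\setminus\{0\}}\frac{\langle \mathcal{W}_0\omega,\omega\rangle}{Q_\mathcal{H}(\omega)},$$
so (i) is equivalent to the existence of $\eta>0$ with $\langle \mathcal{W}_0\omega,\omega\rangle\leq(1-\eta)Q_\mathcal{H}(\omega)$ for every $\omega\in H_0^1$.

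The equivalence (i)$\Leftrightarrow$(ii) is then algebraic, using $\mathcal{R}_-=\mathcal{W}_0+\mathcal{W}_\infty$ and the identity $\langle(\bar\Delta+\mathcal{R}_+)\omega,\omega\rangle=Q_\mathcal{H}(\omega)+\langle\mathcal{W}_\infty\omega,\omega\rangle$. From strong subcriticality with constant $1-\eta'$, subtracting $\langle\mathcal{W}_\infty\omega,\omega\rangle\geq 0$ gives (i) with $\eta=\eta'$. For the converse, I would combine (i) with the inequality $\langle\mathcal{W}_\infty\omega,\omega\rangle\leq(1-\varepsilon)\int|\nabla\omega|^2\leq(1-\varepsilon)\langle(\bar\Delta+\mathcal{R}_+)\omega,\omega\rangle$ already proved in \eqref{eps-pos2}, obtaining (ii) with constant $1-\eta\varepsilon>0$.

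The direction (i)$\Rightarrow$(iii) is immediate: if $\omega\in H_0^1$ satisfies $\mathcal{L}\omega=0$ in distribution, then $Q_\mathcal{H}(\omega,\varphi)=\langle\mathcal{W}_0\omega,\varphi\rangle$ for all $\varphi\in C_0^\infty(E)$, extends to $\varphi=\omega$ by density, and (i) forces $Q_\mathcal{H}(\omega)=0$, whence $\omega\equiv 0$ by \eqref{vNP}. The converse (iii)$\Rightarrow$(i) is the main substance. The standing non-negativity of $\mathcal{L}$ gives $Q_\mathcal{L}\geq 0$ on $H_0^1$ by density, which is exactly $\|\mathcal{H}^{-1/2}\mathcal{W}_0\mathcal{H}^{-1/2}\|_{2\to 2}\leq 1$. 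To upgrade to strict inequality, I would use compactness: $\mathcal{H}^{-1/2}\mathcal{W}_0\mathcal{H}^{-1/2}=(\mathcal{W}_0^{1/2}\mathcal{H}^{-1/2})^*(\mathcal{W}_0^{1/2}\mathcal{H}^{-1/2})$ is compact non-negative self-adjoint on $L^2$, the compactness of $\mathcal{W}_0^{1/2}\mathcal{H}^{-1/2}$ following by the same Rellich-Kondrachov argument as in Lemma \ref{L4b} applied to $\mathcal{W}_0^{1/2}$ (which is also bounded and compactly supported in $K_0$). If the norm were $1$, compactness would produce a nonzero eigenvector $\psi_*\in L^2$ with $\mathcal{H}^{-1/2}\mathcal{W}_0\mathcal{H}^{-1/2}\psi_*=\psi_*$; setting $\omega_*:=\mathcal{H}^{-1/2}\psi_*\in H_0^1\setminus\{0\}$, a short computation using the self-adjointness of $\mathcal{H}^{1/2}$ shows $Q_\mathcal{H}(\omega_*,\varphi)=\langle\mathcal{W}_0\omega_*,\varphi\rangle$ for all $\varphi\in C_0^\infty(E)$, producing a nonzero element of $\mathrm{Ker}_{H_0^1}(\mathcal{L})$ and contradicting (iii).

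The delicate point is this construction of $\omega_*$ in (iii)$\Rightarrow$(i): passing from the spectral data for $\mathcal{H}^{-1/2}\mathcal{W}_0\mathcal{H}^{-1/2}$ to an actual distributional solution of $\mathcal{L}\omega=0$ in $H_0^1$ requires the compactness (not merely boundedness) of $\mathcal{W}_0^{1/2}\mathcal{H}^{-1/2}$ on $L^2$, despite $\mathcal{H}^{-1}$ being unbounded and $\mathcal{W}_0^{1/2}$ admitting no bounded inverse in general.
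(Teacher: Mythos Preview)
Your proof is correct and follows essentially the same route as the paper's: the variational reformulation $\|\mathcal{H}^{-1/2}\mathcal{W}_0\mathcal{H}^{-1/2}\|_{2\to 2}=\sup_{\omega\in H_0^1\setminus\{0\}}\langle\mathcal{W}_0\omega,\omega\rangle/Q_\mathcal{H}(\omega)$, the algebra with the splitting $\mathcal{R}_-=\mathcal{W}_0+\mathcal{W}_\infty$ together with \eqref{eps-pos2} for (i)$\Leftrightarrow$(ii), and the compactness of $\mathcal{H}^{-1/2}\mathcal{W}_0\mathcal{H}^{-1/2}$ (via Lemma~\ref{L4b}) to pass from norm $\leq 1$ to norm $<1$ in (iii)$\Rightarrow$(i). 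The paper phrases the last step as ``$\mathcal{H}^{-1/2}$ is an isometry from $\mathrm{Ker}_{L^2}(I-\mathcal{H}^{-1/2}\mathcal{W}_0\mathcal{H}^{-1/2})$ to $\mathrm{Ker}_{H_0^1}(\mathcal{L})$'', which is exactly your eigenvector construction $\omega_*=\mathcal{H}^{-1/2}\psi_*$.
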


\begin{proof}

Notice that by Lemma \ref{L4b}, the operator 

$$\mathcal{H}^{-1/2}\mathcal{W}_0\mathcal{H}^{-1/2}=(\mathcal{H}^{-1/2}\mathcal{W}_0^{1/2})(\mathcal{W}_0\mathcal{H}^{-1/2})$$
is self-adjoint, compact on $L^2$. Then, {\em mutatis mutandis}, the proof of \cite[Definition 6]{D2} (see also \cite[Remark 2]{D2}) applies, and shows the equivalence of i) and iii). For the sake of completeness, let us detail a bit more: for $u\in L^2$, define $\omega=\mathcal{H}^{-1/2}u\in H_0^1$. Then

%


$$\begin{array}{rcl}
(\mathcal{H}^{-1/2}\mathcal{W}_0\mathcal{H}^{-1/2}u,u)\leq (u,u)&\Leftrightarrow& (\mathcal{H}^{-1/2}\mathcal{W}_0\,\omega,\mathcal{H}^{1/2}\omega)\leq (\mathcal{H}^{1/2}\omega,\mathcal{H}^{1/2}\omega)\\
&\Leftrightarrow& (\mathcal{W}_0\,\omega,\omega)\leq (\mathcal{H}\omega,\omega).
\end{array}$$
The latter inequality is true since $\mathcal{L}=\mathcal{H}-\mathcal{W}_0$ is non-negative, hence 

$$(\mathcal{H}^{-1/2}\mathcal{W}_0\mathcal{H}^{-1/2}u,u)\leq ||u||_2,\qquad u\in L^2.$$
Since $\mathcal{H}^{-1/2}\mathcal{W}_0\mathcal{H}^{-1/2}$ is self-adjoint, this yields $||\mathcal{H}^{-1/2}\mathcal{W}_0\mathcal{H}^{-1/2}||_{2\to 2}\leq 1$. Since $\mathcal{H}^{-1/2}\mathcal{W}_0\mathcal{H}^{-1/2}$ is compact on $L^2$, either $1$ is eigenvalue of $\mathcal{H}^{-1/2}\mathcal{W}_0\mathcal{H}^{-1/2}$, or i) is satisfied for some $\eta>0$. But using that
$$\mathcal{L}=\mathcal{H}^{1/2}(I-\mathcal{H}^{-1/2}\mathcal{W}_0\mathcal{H}^{-1/2})\mathcal{H}^{1/2},$$
it can be shown that $\mathcal{H}^{-1/2}$ is an isometry from $\mathrm{Ker}_{L^2}(I-\mathcal{H}^{-1/2}\mathcal{W}_0\mathcal{H}^{-1/2})$ to $\mathrm{Ker}_{H_0^1}(\mathcal{L})$. Thus, $1$ is an $L^2$ eigenvalue of $\mathcal{H}^{-1/2}\mathcal{W}_0\mathcal{H}^{-1/2}$, if and only if $\mathrm{Ker}_{H_0^1}(\mathcal{L})\neq\{0\}$, and this proves that i) and iii) are equivalent.

Let us now prove the equivalence of i) and ii): first, let us assume ii), that is for some $\varepsilon>0$,

$$\langle \mathcal{R}_-\omega,\omega\rangle\leq (1-\varepsilon)\langle (\nabla^*\nabla +\mathcal{R}_+)\omega,\omega\rangle,\qquad \forall \omega\in C_0^\infty(E).$$
Since

$$\begin{array}{rcl}
\langle \mathcal{R}_-\omega,\omega\rangle&=&\langle \mathcal{W}_0\,\omega,\omega\rangle+\langle\mathcal{W}_\infty\,\omega,\omega\rangle\\\\
&\geq&\langle \mathcal{W}_0\,\omega,\omega\rangle,
\end{array}$$
one gets

$$\langle \mathcal{W}_0\,\omega,\omega\rangle\leq (1-\varepsilon)\langle (\nabla^*\nabla +\mathcal{R}_+)\omega,\omega\rangle,\qquad \forall \omega\in C_0^\infty(E).$$
As before, this implies that

$$\langle \mathcal{H}^{-1/2}\mathcal{W}_0\mathcal{H}^{-1/2}u,u\rangle\leq (1-\varepsilon)||u||_2,\qquad \forall u\in L^2$$
and since $\mathcal{H}^{-1/2}\mathcal{W}_0\mathcal{H}^{-1/2}$ is self-adjoint on $L^2$,

$$
||\mathcal{H}^{-1/2}\mathcal{W}_0\mathcal{H}^{-1/2}||_{2\to 2}\leq 1-\varepsilon,
$$
that is, i) with $\eta=\varepsilon$. Conversely, assume that i) holds, then by the same argument, we obtain
$$
\langle \mathcal{W}_0\,\omega,\omega\rangle\leq (1-\eta)\langle (\nabla^*\nabla +\mathcal{R}_+-\mathcal{W}_{\infty})\omega,\omega\rangle,\qquad \forall \omega\in C_0^\infty(E).$$
Therefore
$$
\langle \mathcal{R}_-\omega,\omega\rangle=\langle \mathcal{W}_0\,\omega,\omega\rangle+\langle\mathcal{W}_\infty\,\omega,\omega\rangle\leq (1-\eta)\langle (\nabla^*\nabla +\mathcal{R}_+)\omega,\omega\rangle+\eta \langle -\mathcal{W}_{\infty})\omega,\omega\rangle $$
for all $\omega\in C_0^\infty(E)$. By \eqref{eps-pos2}, there is $\varepsilon>0$ such that

$$\langle\mathcal{W}_\infty\,\omega,\omega\rangle\leq (1-\varepsilon) \langle \nabla^*\nabla\omega,\omega\rangle,$$
so

$$\langle \mathcal{R}_-\omega,\omega\rangle\leq (1-\eta+(1-\varepsilon)\eta)\langle (\nabla^*\nabla +\mathcal{R}_+)\omega,\omega\rangle.$$
Setting
$$(1-\eta+(1-\varepsilon)\eta)=1-\alpha$$
with $\alpha>0$, one obtains that $\mathcal{L}$ is $(1-\alpha)$-strongly subcritical. This concludes the proof that $i)$ is equivalent to ii).

\end{proof}
We will actually need the following slight variation of Lemma \ref{L4c}:

\begin{Lem}\label{L4}
Assume that $M$ is non-parabolic. For every $\lambda\geq0$, the operator $\mathcal{A}_\lambda$ is self-adjoint and compact on $L^2$. Furthermore, the following are equivalent:

\begin{enumerate}

\item[i)] There is $\eta\in (0,1)$ such that for all $\lambda\geq0$,
$$\|\mathcal{A}_\lambda\|_{2\to 2}\leq 1-\eta.$$

\item[ii)] $\mathcal{L}$ is strongly subcritical.

\item[iii)] $\mathrm{Ker}_{H_0^1}(\mathcal{L})=\{0\}.$

\end{enumerate}
\end{Lem}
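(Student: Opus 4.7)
My plan is to reduce this statement to Lemma \ref{L4c}, which handles the case of the operator $\mathcal{H}^{-1/2}\mathcal{W}_0\mathcal{H}^{-1/2}$. The two key ingredients will be the standard fact that $AA^*$ and $A^*A$ have the same non-zero spectrum (hence the same operator norm when both are bounded), and the operator-monotone inequality $(\mathcal{H}+\lambda)^{-1} \leq \mathcal{H}^{-1}$ valid for any $\lambda \geq 0$, as $\mathcal{H}\geq 0$ on $L^2$ by Proposition \ref{goh}.

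First I would establish self-adjointness and compactness. Self-adjointness of $\mathcal{A}_\lambda$ is immediate from the self-adjointness of $(\mathcal{H}+\lambda)^{-1}$ and the pointwise symmetry of $\mathcal{W}_0^{1/2}$. For compactness, I would write
$$\mathcal{A}_\lambda = \bigl(\mathcal{W}_0^{1/2}(\mathcal{H}+\lambda)^{-1/2}\bigr)\bigl(\mathcal{W}_0^{1/2}(\mathcal{H}+\lambda)^{-1/2}\bigr)^*,$$
so it suffices to prove that $T_\lambda := \mathcal{W}_0^{1/2}(\mathcal{H}+\lambda)^{-1/2}$ is compact on $L^2$. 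This follows by adapting the proof of Lemma \ref{L4b} verbatim: $(\mathcal{H}+\lambda)^{-1/2}$ maps $L^2$ continuously into $H_0^1$, which embeds into $W^{1,2}_{loc}$; multiplication by $\mathcal{W}_0^{1/2} \in L^\infty$ with support in the compact set $K_0$ followed by the Rellich--Kondrachov compact embedding $W^{1,2}(K_0) \hookrightarrow L^2(K_0)$ gives the required compactness.

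Next I would prove the equivalence of i), ii), iii). Setting $A := \mathcal{W}_0^{1/2}\mathcal{H}^{-1/2}$, Lemma \ref{L4b} ensures that $A$ is bounded on $L^2$ with adjoint $A^* = \mathcal{H}^{-1/2}\mathcal{W}_0^{1/2}$. Hence
$$\mathcal{A}_0 = AA^*, \qquad \mathcal{H}^{-1/2}\mathcal{W}_0\mathcal{H}^{-1/2} = A^*A,$$
and since $\|AA^*\|_{2\to 2} = \|A^*A\|_{2\to 2} = \|A\|_{2\to 2}^2$, we have
$$\|\mathcal{A}_0\|_{2\to 2} = \|\mathcal{H}^{-1/2}\mathcal{W}_0\mathcal{H}^{-1/2}\|_{2\to 2}.$$
Lemma \ref{L4c} therefore gives the equivalence of $\|\mathcal{A}_0\|_{2\to 2} \leq 1-\eta$ with ii) and iii).

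Finally, to upgrade from $\lambda = 0$ to uniformity in $\lambda \geq 0$, I would use the spectral theorem: since $0 \leq (\mathcal{H}+\lambda)^{-1} \leq \mathcal{H}^{-1}$ as positive self-adjoint operators, one has for every $u\in L^2$
$$\langle \mathcal{A}_\lambda u, u\rangle = \bigl\langle (\mathcal{H}+\lambda)^{-1}\mathcal{W}_0^{1/2}u,\,\mathcal{W}_0^{1/2}u\bigr\rangle \leq \bigl\langle \mathcal{H}^{-1}\mathcal{W}_0^{1/2}u,\,\mathcal{W}_0^{1/2}u\bigr\rangle = \langle \mathcal{A}_0 u, u\rangle,$$
so by self-adjointness and non-negativity of both operators, $\|\mathcal{A}_\lambda\|_{2\to 2} \leq \|\mathcal{A}_0\|_{2\to 2}$ for all $\lambda \geq 0$. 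Combined with the previous paragraph this delivers i) from iii), while the reverse implication i)$\Rightarrow$iii) is just the $\lambda=0$ case together with Lemma \ref{L4c}. The only subtle point is ensuring $A^* = \mathcal{H}^{-1/2}\mathcal{W}_0^{1/2}$ (a priori only densely defined), but this is precisely the content of Lemma \ref{L4b} and its proof, so I expect no new obstacle.
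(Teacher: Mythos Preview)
Your proposal is correct and follows essentially the same route as the paper: factor $\mathcal{A}_\lambda = \mathcal{U}_\lambda^*\mathcal{U}_\lambda$ with $\mathcal{U}_\lambda = (\mathcal{H}+\lambda)^{-1/2}\mathcal{W}_0^{1/2}$, invoke Lemma~\ref{L4b} for compactness and the adjoint identification, use $\|AA^*\|=\|A^*A\|$ to reduce to Lemma~\ref{L4c}, and handle the uniformity in $\lambda$ by monotonicity. The only cosmetic difference is that the paper obtains $\|\mathcal{A}_\lambda\|\le\|\mathcal{A}_0\|$ via the factorization $\mathcal{U}_\lambda^* = \mathcal{U}_0^*\,\mathcal{H}^{1/2}(\mathcal{H}+\lambda)^{-1/2}$ with the second factor a contraction, whereas you use the operator inequality $(\mathcal{H}+\lambda)^{-1}\le\mathcal{H}^{-1}$ directly; these are equivalent formulations of the same spectral fact.
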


\noindent{\em Proof: }  Denote $\mathcal{U}_\lambda=(\mathcal{H}+\lambda)^{-1/2}\mathcal{W}_0^{1/2}$. Then by Lemma \ref{L4b}, $\mathcal{U}_\lambda^*=\mathcal{W}_0^{1/2}(\mathcal{H}+\lambda)^{-1/2}$, and noticing that

\begin{equation*}\label{au}
\mathcal{A}_\lambda=\mathcal{U}_\lambda^*\mathcal{U}_\lambda,
\end{equation*}
one concludes by Lemma \ref{L4b} that $\mathcal{A}_\lambda$ is self-adjoint, compact on $L^2$.
Let us now observe that, since $\|\mathcal{U}_\lambda^* \mathcal{U}_\lambda\|_{2\to 2}=\|\mathcal{U}_\lambda^*\|^2_{2\to 2}$ , for any $\varepsilon>0$,
the following two conditions are equivalent:
$$\|\mathcal{A}_\lambda\|_{2\to 2}\le (1-\varepsilon)^2, \ \forall\,\lambda\geq 0,$$
and
$$\|\mathcal{U}_\lambda^*\|_{2\to 2}\le 1-\varepsilon, \ \forall\,\lambda\geq 0.$$
Remarking that $\mathcal{U}_\lambda^*=\mathcal{U}_0\mathcal{H}^{1/2}(\mathcal{H}+\lambda)^{-1/2}$, 
and that $\mathcal{H}^{1/2}(\mathcal{H}+\lambda)^{-1/2}$ is contractive on $L^2$ by the spectral theorem,  it follows that the latter condition is equivalent to
\begin{equation*}\|\mathcal{U}_0\|_{2\to 2}\le 1-\varepsilon,
\end{equation*}
that is
\begin{equation}\|\tilde{\mathcal{A}}_0\|_{2\to 2}\le (1-\varepsilon)^2,\label{A_0}\end{equation}
where $$
\tilde{\mathcal{A}}_0=\mathcal{U}_0\,\mathcal{U}_0^*=\mathcal{H}^{-1/2}\mathcal{W}_0\mathcal{H}^{-1/2}.
$$
Thus, i) is equivalent to \eqref{A_0}. By Lemma \ref{L4c}, \eqref{A_0} holds for some $\varepsilon>0$ if and only if ii) or iii) hold, which 
proves the claim.



\cqfd

Let us now recall some  notation  about the spectrum of a bounded linear operator. If $T$ is a bounded operator on $L^p$, $p\in[1,\infty]$, denote by $\sigma_p(T)$  the spectrum  of $T$. Also, denote by 

$$r_p(T)=\max\{|\mu|\,,\,\mu\in \sigma_p(T)\},$$
the spectral radius of $T$. We recall the well-known property that

$$\lim_{n\to +\infty}\|T^n\|_{p\to p}^{1/n}=r_p(T).$$

\begin{Lem}\label{L5}
Assume that
$$\|\mathcal{A}_\lambda\|_{2\to 2}\leq 1-\eta, \quad \forall \lambda\geq 0,$$
for some $\eta\in(0,1)$, then 
$$r_\infty(\mathcal{B}_\lambda)\leq 1-\eta$$
for all $\lambda\geq 0$.
\end{Lem}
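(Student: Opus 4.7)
The plan is to exploit the standard \emph{ABBA} trick relating the spectra of $\mathcal{A}_\lambda$ and $\mathcal{B}_\lambda$, combined with the compactness of $\mathcal{B}_\lambda$ on $L^\infty$ established in Lemma \ref{L1}. Since $\mathcal{B}_\lambda$ is compact on $L^\infty$, its spectrum consists of $\{0\}$ together with a (possibly empty) sequence of eigenvalues, and the spectral radius is attained (or is $0$). Hence it suffices to show that every nonzero eigenvalue $\mu\in\sigma_\infty(\mathcal{B}_\lambda)$ satisfies $|\mu|\le 1-\eta$.

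Suppose then that $f\in L^\infty$ is nonzero with $\mathcal{B}_\lambda f=\mu f$ and $\mu\neq 0$. The key construction is to set
$$g:=\mathcal{W}_0^{1/2}f.$$
Since $\mathcal{W}_0$ has compact support contained in $K_0$ and $\mathcal{W}_0^{1/2}\in L^\infty_{\mathrm{loc}}$, the section $g$ is bounded and compactly supported, hence $g\in L^2$. Moreover $g\neq 0$: if $\mathcal{W}_0^{1/2}f=0$ then $\mathcal{W}_0 f=\mathcal{W}_0^{1/2}(\mathcal{W}_0^{1/2}f)=0$, so $\mathcal{B}_\lambda f=(\mathcal{H}+\lambda)^{-1}\mathcal{W}_0 f=0=\mu f$, forcing $f\equiv 0$ because $\mu\neq 0$, a contradiction.

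Now apply $\mathcal{A}_\lambda$ to $g$ and factor through $\mathcal{B}_\lambda$:
$$\mathcal{A}_\lambda g=\mathcal{W}_0^{1/2}(\mathcal{H}+\lambda)^{-1}\mathcal{W}_0^{1/2}\bigl(\mathcal{W}_0^{1/2}f\bigr)=\mathcal{W}_0^{1/2}(\mathcal{H}+\lambda)^{-1}\mathcal{W}_0 f=\mathcal{W}_0^{1/2}(\mu f)=\mu g.$$
Thus $\mu$ is an eigenvalue of $\mathcal{A}_\lambda$ acting on $L^2$ with nonzero eigenvector $g$, and the hypothesis gives
$$|\mu|\le \|\mathcal{A}_\lambda\|_{2\to 2}\le 1-\eta.$$
Since this bound holds for every nonzero eigenvalue of $\mathcal{B}_\lambda$ on $L^\infty$, we conclude $r_\infty(\mathcal{B}_\lambda)\le 1-\eta$. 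The only step where one must be a little careful is verifying that $g\neq 0$ and that the factorization $\mathcal{W}_0=\mathcal{W}_0^{1/2}\mathcal{W}_0^{1/2}$ (interpreted fibrewise via functional calculus on the symmetric endomorphism $\mathcal{W}_0(x)$) behaves as expected; both are immediate from the construction of $\mathcal{W}_0^{1/2}$, so no real obstacle appears.
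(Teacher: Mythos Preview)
Your proof is correct and follows essentially the same argument as the paper: use compactness of $\mathcal{B}_\lambda$ on $L^\infty$ (Lemma~\ref{L1}) to reduce to eigenvalues, then multiply an eigenvector by $\mathcal{W}_0^{1/2}$ to produce a nonzero $L^2$ eigenvector of $\mathcal{A}_\lambda$ with the same eigenvalue, and conclude via the norm bound on $\mathcal{A}_\lambda$. The only cosmetic difference is that the paper explicitly invokes Fredholm theory to justify that nonzero spectral points of a compact operator are eigenvalues, which you state without attribution.
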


\noindent{\em Proof: }  Let $\mu\in\sigma_\infty(\mathcal{B}_\lambda)\setminus\{0\}$. Recall that by Lemma  \ref{L1}, $\mathcal{B}_\lambda$ is compact on $L^\infty$; by Fredholm theory for compact operators on Banach spaces (see \cite{HP}), there exists a non-zero function  $\varphi\in L^\infty$  such that $\mathcal{B}_\lambda\varphi=\mu\varphi$, that is:

\begin{equation}\label{eigen}
(\mathcal{H}+\lambda)^{-1}\mathcal{W}_0\varphi=\mu\varphi.
\end{equation}
Applying $\mathcal{W}_0^{1/2}$ to both sides of \eqref{eigen} yields
$$\mathcal{W}_0^{1/2}(\mathcal{H}+\lambda)^{-1}\mathcal{W}_0\varphi=\mu \mathcal{W}_0^{1/2}\varphi,$$
that is
$$\mathcal{A}_\lambda \mathcal{W}_0^{1/2}\varphi=\mu \mathcal{W}_0^{1/2}\varphi.$$
Notice that $\mathcal{W}_0^{1/2}\varphi\not\equiv0$, since otherwise one would get $\mathcal{B}_\lambda\varphi=0$, that is $\mu=0$, which is a contradiction. Also, since $\varphi\in L^\infty$, using the fact that $\mathcal{W}_0$ has compact support, one sees that $\mathcal{W}_0^{1/2}\varphi\in L^2$. As a consequence,

$$\mu\in \sigma_2(\mathcal{A}_\lambda).$$
By Lemma \ref{L4}, there is $\eta>0$, independent of $\lambda$, such that
$$\|\mathcal{A}_\lambda\|_{2\to 2}\leq 1-\eta.$$
Therefore,

$$|\mu|\leq 1-\eta.$$
Since this is true for every $\mu\in \sigma_\infty(\mathcal{B}_\lambda)\setminus\{0\}$, one gets, by definition of the spectral radius, that $r_\infty(\mathcal{B}_\lambda)\leq 1-\eta$.

\cqfd

\bigskip

We are now ready to give the proof of  Proposition \ref{key}.

\bigskip

\noindent{\em Proof of Proposition $\ref{key}$:} 
Let $M$  satisfy the assumptions. By Lemma \ref{L5}, for every $\lambda\geq0$, the spectral radius of $\mathcal{B}_\lambda$ on $L^\infty$ is less than $1-\eta$. In particular, $1\not\in \sigma_\infty(\mathcal{B}_\lambda)$, and thus $(I-\mathcal{B}_\lambda)^{-1}$ is a bounded operator on $L^\infty$. It remains to show that
$$\sup_{\lambda\geq 0}\|(I-\mathcal{B}_\lambda)^{-1}\|_{\infty\to \infty}<+\infty.$$
First, recall that by (an easy case of) Lemma \ref{H_op},
$$\|(\mathcal{H}+\lambda)^{-1}\|_{\infty\to \infty}\leq \frac{C}{\lambda}.$$
On the other hand, $\mathcal{W}_0$ being compactly supported acts on all $L^\infty$ spaces.
Thus there exists $\Lambda>0$ such that for every $\lambda\geq\Lambda$,
\begin{equation}\label{largelambda}
\|\mathcal{B}_{\lambda}\|_{\infty\to \infty}=\|(\mathcal{H}+\lambda)^{-1}\mathcal{W}_0\|_{\infty\to \infty}\leq \frac{1}{2}.
\end{equation}
It follows that for every $\lambda\geq\Lambda$,
$$\|(I-\mathcal{B}_\lambda)^{-1}\|_{\infty\to \infty}\leq 2.$$
\bigskip
It remains to show that $\|(I-\mathcal{B}_\lambda)^{-1}\|_{\infty\to \infty}$ is bounded for $\lambda\in [0,\Lambda]$. For this, it is enough to prove that $\lambda\mapsto (I-\mathcal{B}_\lambda)^{-1}\in \mathcal{L}(L^\infty,L^\infty)$ is continuous on $[0,\infty)$. 

Let $\lambda_0\geq 0$. Since, by Lemma \ref{L5},
$$\lim_{n\to +\infty}\|\mathcal{B}_{\lambda_0}^n\|_{\infty\to \infty}^{1/n}\leq 1-\eta,$$
there exists $N$ such that $\|\mathcal{B}_{\lambda_0}^N\|_{\infty\to \infty}^{1/N}\leq 1-\frac{\eta}{2}$, hence for some $\alpha\in (0,1)$,

$$\|\mathcal{B}_{\lambda_0}^N\|_{\infty\to \infty}\leq 1-\alpha.$$
Now we know by Lemma \ref{L1} that the map $\lambda\mapsto \mathcal{B}_\lambda\in \mathcal{L}(L^\infty,L^\infty)$ is continuous on $[0,\infty)$.
Therefore, for $\lambda$ close enough to $\lambda_0$, one has
$$\|\mathcal{B}_{\lambda}^N\|_{\infty\to \infty}\leq 1-\frac{\alpha}{2}.$$
By Lemma \ref{L1} again, there is a constant $C\geq 1$ such that for every $\lambda\geq 0$,
$$\|\mathcal{B}_\lambda\|_{\infty\to \infty}\leq C.$$
Thus, for every $n\geq N$ and $\lambda$ close  to $\lambda_0$,
$$\|\mathcal{B}_\lambda^n\|_{\infty\to \infty}\leq C^N \left(1-\frac{\alpha}{2}\right)^{[n/N]}.$$
Thus, the series $\sum_{n\geq0}\|\mathcal{B}_\lambda^n\|_{\infty\to \infty}$ converges uniformly for $\lambda$ close  to $\lambda_0$, and this implies the continuity of $\lambda\mapsto (I-\mathcal{B}_\lambda)^{-1}$ at $\lambda_0$. The proof of Proposition \ref{key} is complete.

\cqfd

\section{Proofs of Theorem \ref{main}, Remark \ref{subc}, and Corollary \ref{scalar}}

\noindent{\em Proof of Theorem $\ref{main}$: }
 Observe first that under the assumptions of the theorem $M$ is non-parabolic, therefore Lemma \ref{L4} applies and ii) is equivalent to  $\mathrm{Ker}_{H_0^1}(\mathcal{L})=\{0\}$.
 
    Let $\mathcal{H}$ be defined by \eqref{defh}. By Lemma \ref{H_op},
\begin{equation}\label{resoh}
\sup_{t>0}\|(I+t\mathcal{H})^{-1}V_{\sqrt{t}}^{1/p}\|_{p\to\infty}<\infty,
\end{equation}
for all $p>\nu/2$. 
Since
$\mathcal{L}=\mathcal{H}-\mathcal{W}_0$, one may write the perturbation formula
\begin{equation}\label{pertu}
(1+t\mathcal{L})^{-1}=(I-(1+t\mathcal{H})^{-1}t\mathcal{W}_0)^{-1}(1+t\mathcal{H})^{-1}.
\end{equation}
Assume now ii), hence $\mathrm{Ker}_{H_0^1}(\mathcal{L})=\{0\}$.
Thus Proposition \ref{key} applies   and, taking $\lambda=1/t$, yields
\begin{equation}
\label{resol}\sup_{t>0}\|(I-(1+t\mathcal{H})^{-1}t\mathcal{W}_0)^{-1}\|_{\infty\to\infty}<+\infty.
\end{equation}
Gathering \eqref{resoh}, \eqref{pertu}, and \eqref{resol}, one obtains
$$\sup_{t>0}\|(I+t\mathcal{L})^{-1}V_{\sqrt{t}}^{1/p}\|_{p\to\infty}<\infty,$$
for all $p>\nu/2$. By Theorem \ref{blabla}, this implies $(U\!E_{\mathcal{L}})$. We have proved that ii) implies i). 

By \eqref{dd}, $M$ satisfies $(V^p$) for all $p\in \left(\frac{\kappa}{\kappa-2},+\infty\right)$, and by Lemma \ref{L7}, for all such $p$,

$$\mathrm{Ker}_{H_0^1}(\mathcal{L})\subset \mathrm{Ker}_{L^p}(\mathcal{L}).$$
Consequently, iii) implies that $\mathrm{Ker}_{H_0^1}(\mathcal{L})=\{0\}$, i.e. ii). Thus, we have proved so far that iii)$\Rightarrow$ ii) $\Rightarrow$ i). By Lemma \ref{converse}, i) implies iii), so i), ii) and iii) are equivalent. 

If $\kappa>4$, then by \eqref{dd}, ($V^2$) is satisfied, and Lemma  \ref{L7} implies that

$$\mathrm{Ker}_{L^2}(\mathcal{L})=\mathrm{Ker}_{H_0^1}(\mathcal{L}),$$
thus $\mathcal{L}$ is strongly subcritical if and only if $\mathrm{Ker}_{L^2}(\mathcal{L})=\{0\}$. 

\cqfd

{\em Proof of Remark $\ref{subc}$:} Recall that by non-parabolicity of $M$, $(V^\infty)$ is satisfied. By Lemma \ref{L4}, $\mathcal{L}$ strongly subcritical implies that 

$$\mathrm{Ker}_{H_0^1}(\mathcal{L})=\{0\}.$$
Thus, the conclusion of Proposition \ref{key} holds,  the proof of the implication ii)$\Rightarrow$ i) of Theorem \ref{main} applies, and shows that \eqref{UEL} holds.

\cqfd
{\em Proof of Corollary $\ref{scalar}$:} First, we already observed that if $M$ is non-parabolic, then the strong subcriticality of $L$ implies its  subcriticality. Thus, by \eqref{eps-pos}, the completion of $C_0^{\infty}(M)$ under the norm $\langle L^{1/2}u,L^{1/2}u\rangle$ injects continuously into $L^2_{loc}$, which implies that $L$ is subcritical. This shows that ii) implies iii).

Furthermore, by Remark \ref{subc}, if $L$ is strongly subcritical, then the heat kernel of $L$ has Gaussian estimates. Thus, ii) implies i).

Next, let us show that ii) and iii) are equivalent. Let us introduce the notations 

$$W_0(x):=\mathbf{1}_{K_0}(x)V_-(x),$$
and

$$W_\infty(x):=\mathbf{1}_{M\setminus K_0}(x)V_-(x).$$
Assume that $L$ is subcritical, and let us prove by contradiction that $V_-$ is strongly subcritical. If it is not the case, then by Lemma \ref{L4} there exists $\varphi\in \mathrm{Ker}_{H_0^1}(L)\setminus\{0\}$. By definition of $H_0^1$, there is a sequence of smooth compactly supported functions $(\varphi_n)_{n\in\mathbb{N}}$ such that

$$\varphi_n\rightarrow\varphi\mbox{ in }H_0^1.$$
Notice that since $L$ is non-negative, for every $u\in C_0^\infty(M)$,

$$\int_MW_0u^2\leq Q_H(u),$$
where we recall that $Q_H$ is the quadratic form of $H=\Delta+V_+-W_\infty$. Thus, $\int_MW_0u^2\leq ||u||_{H_0^1}^2<\infty$, for every $u\in H_0^1$, so the convergence of $(\varphi_n)_{n\in\mathbb{N}}$ to $\varphi$ in $H_0^1$ implies that

$$\lim_{n\to\infty}\int_MW_0\varphi_n=\int_MW_0\varphi.$$
Consequently,

$$
\lim_{n\to\infty}\left(Q_H(\varphi_n)-\int_MW_0\varphi_n^2\right)=Q_H(\varphi)-\int_MW_0\varphi^2.$$
But since $\varphi\in \mathrm{Ker}_{H_0^1}(L)$, one obtains by integration by parts that for every $\psi\in C_0^\infty(M)$,

$$Q_H(\varphi,\psi)-\int_MW_0\varphi\psi=0,$$
and by density of $C_0^\infty(M)$ in $H_0^1$, we conclude that

$$Q_H(\varphi)-\int_MW_0\varphi^2=0.$$
Thus, 

$$
\lim_{n\to\infty}\left(Q_H(\varphi_n)-\int_MW_0\varphi_n^2\right)=0.
$$
Furthermore, by \eqref{vNP}, $(\varphi_n)_{n\in\mathbb{N}}$ converges to $\varphi$ in $L^2_{loc}$, so $\varphi$ is a {\em null-state} (see \cite[Definition 1.1]{PT}) of $L$. By \cite[Theorem 1.4]{PT}, this implies that $L$ is critical, which is a contradiction. This shows that iii) implies ii).

It remains to show that i) implies iii). Assume i), then for all $x\neq y$,

$$\int_1^\infty |e^{-tL}(x,y)|\,dt\leq C\int_1^\infty \frac{dt}{V(x,\sqrt{t})}.$$
Since $M$ is non-parabolic and satisfies \eqref{d} and \eqref{UE}, ($V^\infty$) holds, so the above integral is finite. This implies that $L$ is subcritical. So, iii) holds, which concludes the proof.

\cqfd

\section{Proofs of Theorem \ref{thm_para}, Corollary \ref{4cons} and Corollary \ref{Riesz_hodge}}

\noindent {\em Proof of Theorem $\ref{thm_para}$:} In the case $\nu<2$, one can prove directly --that is, without iterating-- that 

\begin{equation}\label{resol1}
\sup_{t>0}||V_{\sqrt{t}}^{1/2}(I+t\mathcal{L})^{-1}||_{2\to\infty}<\infty.
\end{equation}
We define $\mathcal{H}$ in a way that is different from the one used in the proof of Theorem \ref{main}: let us denote $\mathcal{H}=\bar{\Delta}+\mathcal{R}_+$. We pass to $\lambda=1/t$, and write

$$(\mathcal{L}+\lambda)^{-1}=(\mathcal{H}+\lambda)^{-1/2}(I-(\mathcal{H}+\lambda)^{-1/2}\mathcal{R}_-(\mathcal{H}+\lambda)^{-1/2})^{-1}(\mathcal{H}+\lambda)^{-1/2}.$$
Denote 

$$\tilde{\mathcal{A}}_\lambda:=(\mathcal{H}+\lambda)^{-1/2}\mathcal{R}_-
(\mathcal{H}+\lambda)^{-1/2},$$
and assume first that $\mathcal{R}_-$ is in $L^\infty$. Since

$$||(\mathcal{H}+\lambda)^{-1/2}||_{2\to2}\leq \lambda^{-1/2},$$
for $\lambda>0$ the operator $\tilde{A}_\lambda$ is self-adjoint, bounded on $L^2$. Then, by the proof of Lemma \ref{L4c}, the strong subcriticality assumption on $\mathcal{L}$ implies that there exists $\varepsilon>0$ such that for all $\lambda>0$,

$$||\tilde{\mathcal{A}}_\lambda||_{2\to2}\leq 1-\varepsilon.$$
We emphasize that here, contrary to Lemma \ref{L4}, one has to restrict ourselves to $\lambda>0$ in order that $\tilde{\mathcal{A}}_\lambda$ acts on $L^2$, because $\mathcal{R}_-$ is not in general compactly supported.
If $\mathcal{R}_-$ is only in $L^\infty_{loc}$ but not in $L^\infty$, then by an  approximation argument, one proves that $\tilde{\mathcal{A}}_\lambda$ is still self-adjoint on $L^2$ and has norm bounded by $1-\varepsilon$.

Therefore, the Neumann series $(I-\tilde{A}_\lambda)^{-1}=\sum_{n\geq 0}\tilde{A}_\lambda^n$ converges in  operator norm on $L^2$, and as a consequence

$$\sup_{\lambda>0}||(I-\tilde{A}_\lambda)^{-1}||_{2\to2}<\infty.$$
Hence,
\begin{eqnarray*}
&&||V_{\sqrt{t}}^{1/2}(I+t\mathcal{L})^{-1}||_{2\to\infty}\\
&\leq& ||V_{\sqrt{t}}^{1/2}(I+t\mathcal{H})^{-1/2}||_{2\to\infty}||(I-\tilde{\mathcal{A}}_\lambda)^{-1}||_{2\to2}||(I+t\mathcal{H})^{-1/2}||_{2,2}\\
&\leq & C ||V_{\sqrt{t}}^{1/2}(I+t\mathcal{H})^{-1/2}||_{2\to\infty}
(I+t\mathcal{H})^{-1/2}||_{2,2}.
\end{eqnarray*}
By the spectral theorem, 

$$\sup_{t>0}||(I+t\mathcal{H})^{-1/2}||_{2,2}\leq 1.$$
In the case $\nu<2$, by Lemma \ref{H_op}, we get

$$\sup_{t>0}||V_{\sqrt{t}}^{1/2}(I+t\mathcal{H})^{-1/2}||_{2\to\infty}<\infty.$$
Thus, one obtains

$$\sup_{t>0}||V_{\sqrt{t}}^{1/2}(I+t\mathcal{L})^{-1}||_{2\to\infty}<\infty.$$
This implies \eqref{UEL}.

In the case $\nu=2$, by Lemma \ref{H_op}, one obtains that for all $p\in [2,\infty)$,

$$\sup_{t>0}||V_{\sqrt{t}}^{1/2}(I+t\mathcal{H})^{-1/2}||_{2\to p}<\infty.$$
By Theorem \ref{thm_VEV}, this implies $(V\!EV_{p,q,\gamma})$, for all $1<p\leq q<\infty$ and all $\gamma\in \R$.

Therefore, \eqref{resol1} is proved.

\cqfd




\noindent{\em Proof of Corollary $\ref{Riesz_hodge}$:} By Corollary \ref{hodge}, the heat kernel of $\Delta_k$ has Gaussian estimates. By integration by parts,  $(d+d^*)\Delta_k^{-1/2}$ is an isometry on $L^2$, and the first claim now follows from \cite[Theorem 5]{Sik}.

If moreover $\mathcal{R}_{k+1}$ satisfies \eqref{condik} and  $H_2^{k+1}(M)=\{0\}$ for some $p\in (\frac{\kappa}{\kappa-2},\infty)$, then by Corollary \eqref{hodge}, the heat kernel of $\Delta_{k+1}$ has Gaussian estimates. Thus, by the same argument $d^*_{k+1}\Delta_{k+1}^{-1/2}$ is bounded from $L^p(\Lambda^{k+1}T^*M)$ to $L^p(\Lambda^kT^*M)$, for all $p\in (1,2]$. Taking the adjoint, we deduce that $\Delta_{k+1}^{-1/2}d_{k}$ is bounded from $L^p(\Lambda^kT^*M)$ to $L^p(\Lambda^{k+1}T^*M)$. Using the fact that the Hodge Laplacian and $d$, commute, we deduce that $d_k\Delta^{-1/2}_{k}$ is bounded on $L^p$ for all $p\in (1,\infty)$. The proof for $d_{k-1}^*\Delta_k^{-1/2}$ is the same, starting from the boundedness of $d_{k-1}\Delta^{-1/2}_{k-1}$ on $L^p$, $p\in (1,2)$ and taking the adjoint.

\cqfd

\section{Proof of Theorem \ref{Riesz}}

We follow the strategy developed originally in \cite{D2}: the proof is by a perturbation argument, using the Gaussian estimates result from Theorem~\ref{main}. By hypothesis, $\mathrm{Ric}_-$ satisfies condition \eqref{condik}, so by Theorem \ref{main} we know that there is $\mathcal{W}$ is a non-negative, smooth, {\em compactly supported} potential such that $\vec{\Delta}+\mathcal{W}$ is strongly subcritical, and the heat kernel $e^{-t(\vec{\Delta}+\mathcal{W})}$ has Gaussian estimates. Here, we have identified the potential $\mathcal{W}$ with the operator $\mathcal{W}$ times the identity acting on $1$-forms. More precisely, $\mathcal{W}$ is chosen so that

$$\sup_{x\in M}\int_M G(x,y)||(\mathrm{Ric}+\mathcal{W})_-(y)||_y\,dy<1.$$ 
Furthermore, since $\mathcal{W}$ is non-negative,

$$\int_M|d\omega|^2+|d^*\omega|^2\leq  \langle (\vec{\Delta}+\mathcal{W})\omega,\omega\rangle,\qquad\forall \omega\in C_0^\infty(E).$$
Thus, $d^*(\vec{\Delta}+\mathcal{W})^{-1/2}$ is bounded on $L^2$.  Since $e^{-t(\vec{\Delta}+\mathcal{W})}$ has Gaussian estimates, by \cite{Sik} one deduces that $d^*(\vec{\Delta}+\mathcal{W})^{-1/2}$ is bounded on $L^q$ for all $q\in (1,2)$. Taking the adjoint, one gets that $(\vec{\Delta}+\mathcal{W})^{-1/2}d$ is bounded on $L^q$, for all $q\in [2,\infty)$. However, due to the presence of the potential $\mathcal{W}$, one cannot commute the operators $d$ and $(\vec{\Delta}+\mathcal{W})^{-1/2}$. In what follows, we fix $3<p<\kappa$. Denote $K_0$ the (compact) support of $\mathcal{W}$. The proof of Theorem \ref{Riesz} relies on the following two claims:\\

{\em Claim 1:} $d(\Delta+\mathcal{W})^{-1/2}$ is bounded on $L^p$.\\

{\em Claim 2:} $d\Delta^{-1/2}-d(\Delta+\mathcal{W})^{-1/2}$ is bounded on $L^p$.\\

Indeed, assuming for the moment the validity of these two claims, we conclude that $d\Delta^{-1/2}=(d\Delta^{-1/2}-d
(\Delta+\mathcal{W})^{-1/2})+d(\Delta+\mathcal{W})^{-1/2}$ is bounded on $L^p$. Since $M$ satisfies \eqref{d} and \eqref{UE}, \cite{CD1} implies that $d\Delta^{-1/2}$ is also bounded on $L^q$ for all $q\in (1,2)$. Since $3<p<\kappa$ is arbitrary, one concludes by interpolation that $d\Delta^{-1/2}$ is bounded on $L^p$ for all $p\in (1,\kappa)$. This finishes the proof of Theorem \ref{Riesz}.\\

{\em Proof of Claim 1:} Given the volume lower estimate \eqref{dd}, the proof is a simple adaptation of the proof of \cite[Theorem 16]{D2}. It relies on ideas developed in \cite{C}. Let us explain briefly the main steps of the proof. 
The idea is to show that one can commute the operators $d$ and $(\vec{\Delta}+\mathcal{W})^{-1/2}$, modulo an error which is bounded in $L^p$. First, the Gaussian estimates satisfied by $e^{-t(\vec{\Delta}+\mathcal{W})}$ and \eqref{dd} imply that for all $1\leq r\leq s\leq\infty$,

\begin{equation}\label{heat1}
||e^{-t(\vec{\Delta}+\mathcal{W})}||_{L^r(K_0)\to L^s}\lesssim t^{-\frac{\kappa}{2}\left(\frac{1}{r}-\frac{1}{s}\right)},\qquad \forall t\geq1.
\end{equation}
By subordination, \eqref{heat1} implies

\begin{equation}\label{heat2}
||e^{-t\sqrt{\vec{\Delta}+\mathcal{W}}}||_{L^r(K_0)\to L^s}\lesssim t^{-\kappa\left(\frac{1}{r}-\frac{1}{s}\right)},\qquad \forall t\geq1.
\end{equation}
Notice that since $\mathcal{W}\geq0$, a similar estimate holds for the Poisson semi-group of $\Delta+\mathcal{W}$ (that is, on functions); we will use it in the following form:

\begin{equation}\label{heat3}
||e^{-t\sqrt{\Delta+\mathcal{W}}}||_{L^p\to L^\infty(K_0)}\lesssim t^{-\kappa/p},\qquad \forall t\geq1.
\end{equation}
Furthermore, for all $1\leq q\leq \infty$,

\begin{equation}\label{heat4}
||e^{-t\sqrt{\Delta+\mathcal{W}}}||_{L^q\to L^q}\leq 1,\qquad \forall t>0.
\end{equation}
(this follows from $e^{-(\Delta+\mathcal{W})}\leq e^{-t\Delta}$, and subordination). Consider the vector bundle $E$ over $\R_+\times M$, whose fiber at $(t,p)$ is $\Lambda^1T^*M$. Acting on sections of $E$, consider the elliptic operator (the wave operator) $\mathcal{P}=-\frac{\partial^2}{\partial t^2}+\vec{\Delta}+\mathcal{W}$, with Neumann boundary conditions at the boundary $t=0$. Let $\mathcal{G}$ be the Green function of $\mathcal{P}$, whose kernel is given explicitly by

\begin{equation}\label{green}
\mathcal{G}(\sigma,s,x,y)=\int_0^\infty \left(\frac{e^{-\frac{(\sigma-s)^2}{4t}}-e^{-\frac{(\sigma+s)^2}{4t}}}{\sqrt{4\pi t}}\right)e^{-t(\vec{\Delta}+\mathcal{W})}(x,y)\,dt.
\end{equation}
Then, for $u\in C_0^\infty(M)$, one has the approximate commutation relation:

\begin{equation}\label{commute}
e^{-\sigma\sqrt{\vec{\Delta}+\mathcal{W}}}du=de^{-\sigma\sqrt{\Delta+\mathcal{W}}}u-\mathcal{G}(\mathcal{P}(de^{-\sigma\sqrt{\Delta+\mathcal{W}}}u))
\end{equation}
(see \cite{D2} or \cite{COM} for more details). Denote 

$$E[u]=\mathcal{G}(\mathcal{P}(de^{-\sigma\sqrt{\Delta+\mathcal{W}}}u)).$$
Integrating \eqref{commute} from $t=0$ to $\infty$, one obtains

\begin{equation}\label{com_int}
(\vec{\Delta}+\mathcal{W})^{-1/2}du=d(\Delta+\mathcal{W})^{-1/2}u-\int_0^\infty E[u](\sigma)\,d\sigma.
\end{equation}
It is easy to compute that 

\begin{equation}\label{error_comp}
\mathcal{P}(de^{-\sigma\sqrt{\Delta+\mathcal{W}}}u)=-(e^{-\sigma\sqrt{\Delta+\mathcal{W}}}u)(d\mathcal{W}),
\end{equation}
which has (compact) support included in $K_0$. It follows, using the heat kernel estimates \eqref{heat1} and \eqref{heat2}, that

\begin{equation}\label{error_est}
\left|\left|\int_0^\infty E[u](\sigma)\,d\sigma\right|\right|_p\lesssim ||u||_p
\end{equation}
(see the computation below, where the assumption $\kappa>3$ is needed). Since we know that $(\vec{\Delta}+\mathcal{W})^{-1/2}du$ is bounded on $L^p$, from \eqref{com_int} \eqref{error_est} one deduces that $d(\Delta+\mathcal{W})^{-1/2}$ is bounded on $L^p$.\\

{\em Proof of Claim 2:} Note that the proof of \cite[Theorem 15]{D2} cannot be adapted to the present setting, but given the volume estimate \eqref{dd} and the fact that $\mathcal{W}$ has compact support, one can use instead ideas from \cite[Section 3.6]{AO}. The argument is presented in details in \cite[Proof of Theorem 4.1, Step III]{COM}.\\

{\em Proof of  estimate \eqref{error_est}:} 
First, by \eqref{error_comp} and \eqref{heat3}, \eqref{heat4}, and the fact that $\mathcal{W}$ has compact support, one has

\begin{equation}\label{er1}
||\mathcal{P}(de^{-\sigma\sqrt{\Delta+\mathcal{W}}}u)||_1+||\mathcal{P}(de^{-\sigma\sqrt{\Delta+\mathcal{W}}}u)||_p\lesssim (1+\sigma)^{-\kappa/p}||u||_p.
\end{equation}
The proof of \eqref{error_est} now follows the computations in \cite{C}; first, denote $f(t,x)=\mathcal{P}(de^{-t\sqrt{\Delta+\mathcal{W}}}u)(x)$, then

$$E[u](\sigma,x)=\int_{\R_+\times M}\mathcal{G}(\sigma,s,x,y)f(s,y)\,ds\,d\mu(y),$$
and 

$$\int_0^\infty E[u](\sigma,x)\,d\sigma=\int_{\R_+^2\times M}\mathcal{G}(\sigma,s,x,y)f(s,y)\,d\sigma ds\,d\mu(y).$$
Integrating \eqref{green}, one finds that

$$\begin{array}{rcl}
\int_0^\infty \mathcal{G}(\sigma,s,x,y)\,d\sigma&=&\frac{1}{4\pi}\int_0^\infty\left[\int_{-s}^se^{-\frac{v^2}{4t}}\,dv\right]e^{-t(\vec{\Delta}+\mathcal{W})}(x,y)\,\frac{dt}{\sqrt{t}}\\\\
&=&\frac{2}{\sqrt{\pi}}\int_0^\infty e^{-r^2}\left[\int_0^{\frac{s^2}{4r^2}}e^{-t(\vec{\Delta}+\mathcal{W})}\,dt\right]\,dr.
\end{array}$$
Therefore,

$$\int_0^\infty E[u](\sigma,x)\,d\sigma=\frac{2}{\sqrt{\pi}}\int_{\R^2_+}\left[\int_0^{\frac{s^2}{4r^2}}(e^{-t(\vec{\Delta}+\mathcal{W})}f(s,\cdot))(x)\,dt\right]\,drds.$$
Since $e^{-t(\vec{\Delta}+\mathcal{W})}$ has Gaussian estimates, it is uniformly bounded on $L^p$, and therefore

$$ ||e^{-t(\vec{\Delta}+\mathcal{W})}f(s,\cdot))||_p\lesssim ||f(s,\cdot)||_p\lesssim  (1+s)^{-\kappa/p}||u||_p, \qquad \forall t\leq 1.$$
Also, by the fact that $f(s,\cdot)$ has support included in $K_0$, \eqref{heat1} and \eqref{er1}, for all $t>1$,

$$\begin{array}{rcl}
||e^{-t(\vec{\Delta}+\mathcal{W})}f(s,\cdot))||_p&\lesssim& ||e^{-t(\vec{\Delta}+\mathcal{W})}||_{L^1(K_0)\to L^p}||f(s,\cdot)||_1\\\\
&\lesssim& t^{-\frac{\kappa}{2}(1-\frac{1}{p})}(1+s)^{-\kappa/p}||u||_p.
\end{array}$$
Hence,

$$\left|\left|\int_0^\infty E[u](\sigma,\cdot)\,d\sigma\right|\right|_p\lesssim \left(\int_{\R^2_+}\left[\int_0^{\frac{s^2}{4r^2}}\min(1,t^{-\frac{\kappa}{2}(1-\frac{1}{p})})(1+s)^{-\kappa/p}\,dt\right]\,drds\right)||u||_p.$$
The above integral is equal to

$$\int_{\{2r\sqrt{t}\leq s\}}e^{-r^2}\min(1,t^{-\frac{\kappa}{2}(1-\frac{1}{p})})(1+s)^{-\kappa/p}\,dt dr ds,$$
which, since $p<\kappa$, is easily seen to be equal to

$$\frac{\kappa}{\kappa-p}\int_{\R_+^2}e^{-r^2}\min(1,t^{-\frac{\kappa}{2}(1-\frac{1}{p})})(1+2r\sqrt{t})^{-\kappa/p-1}\,dr ds,$$
which is finite if and only if $p>\frac{\kappa}{\kappa-1}$ and $\kappa>3$. By assumption, these two conditions are satisfied, and thus

$$\left|\left|\int_0^\infty E[u](\sigma,\cdot)\,d\sigma\right|\right|_p\lesssim ||u||_p,$$
that is, \eqref{error_est}.

\cqfd

\section{Appendix: domination, Bochner formulas and Davies-Gaffney on weighted manifolds}
In this Appendix, we plan to explain:

\begin{enumerate}

\item  Why standard domination theory extends to the weighted case.

\item What is the Bochner formula for the weighted Hodge Laplacian. 

\item Why Gaussian off- and on-diagonal estimates for a generalised non-negative Schr\"{o}dinger operator $\mathcal{L}=\nabla^*\nabla+\mathcal{R}$ are equivalent.
 
\end{enumerate} 
 In all this section, $(M,\mu)$, $\mu=e^{f}dx$, will be a complete weighted manifold, and $E\to M$ be a vector bundle with a connection compatible with the metric. Recall the weighted Laplacian $\Delta_\mu=\Delta-( \nabla f,\nabla\cdot)$, and the weighted rough Laplacian $\bar{\Delta}_\mu=\nabla^*_\mu\nabla$ acting on sections of $E$. We first state a formula relating the weighted rough Laplacian to the unweighted one:

\begin{Lem}\label{rough_w}

The following formula hold, for $\xi$ smooth, compactly supported section of $E$:

$$\bar{\Delta}_\mu(\xi)=\bar{\Delta} (\xi) - \nabla_{\nabla f} \xi.$$

\end{Lem}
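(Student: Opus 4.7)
The plan is to derive the identity from the corresponding comparison of the formal adjoints $\nabla^*_\mu$ and $\nabla^*$ on sections of $T^*M\otimes E$, and then specialize to $\omega=\nabla\xi$. More precisely, I would first prove the auxiliary formula
$$\nabla^*_\mu\,\omega \;=\; \nabla^*\omega - i_{\nabla f}\omega$$
for every smooth compactly supported section $\omega$ of $T^*M\otimes E$, where $i_{\nabla f}\omega$ denotes the contraction $\omega(\nabla f)$. Applying this with $\omega=\nabla\xi$ gives $\nabla^*_\mu\nabla\xi = \nabla^*\nabla\xi - (\nabla\xi)(\nabla f) = \bar\Delta\,\xi - \nabla_{\nabla f}\xi$, which is the stated identity.

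For the auxiliary formula, I would start from the defining property of the weighted adjoint: for any smooth compactly supported section $\xi$ of $E$,
$$\int_M (\nabla\xi,\omega)\,d\mu \;=\; \int_M (\xi,\nabla^*_\mu\omega)\,d\mu.$$
Writing $d\mu=e^f\,d\nu$ and pushing $e^f$ into the second slot, the left-hand side equals
$$\int_M (\nabla\xi,\,e^f\omega)\,d\nu \;=\; \int_M (\xi,\nabla^*(e^f\omega))\,d\nu,$$
by definition of the unweighted adjoint. It then suffices to compute $\nabla^*(e^f\omega)$ via the product rule for the induced connection on $T^*M\otimes E$: in a local orthonormal frame $\{e_i\}$,
$$\nabla^*(e^f\omega) \;=\; -\sum_i (\nabla_{e_i}(e^f\omega))(e_i) \;=\; -\sum_i e^f(e_i f)\omega(e_i) + e^f\nabla^*\omega \;=\; e^f\bigl(\nabla^*\omega - \omega(\nabla f)\bigr).$$
Substituting back and comparing with the defining equality (using that $\xi$ is arbitrary) yields the claim.

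I do not anticipate a real obstacle: the only subtlety is to keep track of which connection/adjoint is being used and to apply the product rule for $\nabla$ on the bundle $T^*M\otimes E$ with a scalar factor. The argument is essentially a one-line integration by parts once the relation $\nabla^*(e^f\omega)=e^f(\nabla^*\omega - i_{\nabla f}\omega)$ is in hand, which is the computation above.
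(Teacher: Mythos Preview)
Your proposal is correct and follows essentially the same approach as the paper: both arguments amount to the identity $\nabla^*_\mu=e^{-f}\nabla^*e^{f}$ together with a local orthonormal-frame computation showing that conjugation by $e^f$ produces the extra term $-i_{\nabla f}$, and then specialize to $\omega=\nabla\xi$. The only cosmetic difference is that the paper decomposes $\nabla\xi$ into simple tensors $e_i^*\otimes\nabla_{e_i}\xi$ before applying the frame formula, whereas you work directly with a general section $\omega$ of $T^*M\otimes E$.
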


\begin{proof}

It is easy to see by integration by parts that $\nabla^*_{\mu}=e^{-f}\nabla^*e^{f}$. Let $p$ be a point of $M$, and $(e_1,\cdots,e_n)$ an orthonormal basis of $T_pM$. Notice that $\nabla \xi$ is a section of $T^*M\otimes \Lambda^kT^*M$, and that $\nabla^*$ acts on a section $\alpha\otimes \omega$ of $T^*M\otimes \Lambda^kT^*M$ at the point $p$ according to the formula:

$$\nabla^*\alpha\otimes \omega|_p=-\sum_{i=1}^n\alpha_p(e_i)\nabla_{e_i}\omega|_p.$$
So we see that

$$e^{-f}\nabla^*\alpha\otimes e^{f}\omega|_p=\nabla^*\alpha\otimes \omega|_p-\alpha(\nabla f)\omega|_p.$$
Writing that at $p$,

$$\nabla \xi=\sum_{i=1}^ne_i^*\otimes\nabla_{e_i}\xi=\sum_{i=1}^n\alpha_i\otimes\omega_i,$$
we see that

$$\sum_{i=1}^n\alpha_i(\nabla f)\omega_i|_p=\nabla_{\nabla f}\xi|_p.$$
Recalling that $\bar{\Delta}_\mu=e^{-f}\nabla^*e^{f}\nabla$, we get

$$\bar{\Delta}_\mu(\xi)=\bar{\Delta} (\xi) - \nabla_{\nabla f} \xi.$$

\end{proof}
Then the following extension of the results of \cite{HSU} holds:

\begin{Thm}

For any smooth, compactly supported section $\xi$ of $E$, and every $t\geq0$,

$$|e^{-t\bar{\Delta}_\mu}\xi|\leq e^{-t\Delta_\mu}|\xi|.$$

\end{Thm}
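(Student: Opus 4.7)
The plan is to reduce this to the classical (unweighted) Hess–Schrader–Uhlenbrock domination theorem by establishing a weighted pointwise Kato-type inequality and then rerunning the standard argument. Concretely, I would aim to prove that for $\xi\in C_0^\infty(E)$, wherever $|\xi|>0$,
$$\Delta_\mu |\xi|\;\le\;\mathrm{Re}\,\Bigl(\bar{\Delta}_\mu\xi,\tfrac{\xi}{|\xi|}\Bigr)$$
in the distribution sense on $M$, with the behaviour at zeros of $\xi$ handled in the usual way by replacing $|\xi|$ with $\sqrt{|\xi|^2+\varepsilon^2}$ and sending $\varepsilon\downarrow 0$.

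To establish this inequality I would use the two ingredients assembled just before the theorem. First, Lemma~\ref{rough_w} decomposes $\bar{\Delta}_\mu\xi=\bar{\Delta}\xi-\nabla_{\nabla f}\xi$, and by definition $\Delta_\mu|\xi|=\Delta|\xi|-(\nabla f,\nabla|\xi|)$. The unweighted pieces $\Delta|\xi|\le \mathrm{Re}(\bar{\Delta}\xi,\xi/|\xi|)$ are the classical Kato inequality for a metric connection (as used by Hess–Schrader–Uhlenbrock, see \cite{HSU} or \cite[Prop.~2.2]{HSU}). The drift pieces match exactly: since $\nabla$ is compatible with the fiber metric, $2|\xi|\,\nabla_X|\xi|=\nabla_X|\xi|^2=2\,\mathrm{Re}(\nabla_X\xi,\xi)$ for any vector field $X$, so
$$\nabla_X|\xi|=\mathrm{Re}\bigl(\nabla_X\xi,\tfrac{\xi}{|\xi|}\bigr),$$
and applying this to $X=\nabla f$ shows that the two drift contributions cancel. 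Adding the unweighted Kato inequality to this identity yields the weighted Kato inequality.

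With the Kato inequality in hand, the remainder of the HSU domination argument carries over verbatim, because it depends only on (i) the Kato inequality, (ii) self-adjointness of $\bar{\Delta}_\mu$ on $L^2(E,\mu)$ (which holds by Strichartz-type arguments exactly as quoted in the introduction), and (iii) positivity preservation and submarkovianity of the scalar semigroup $e^{-t\Delta_\mu}$ on $L^2(M,\mu)$, which is immediate. One rewrites the desired inequality as the assertion that $\langle e^{-t\bar\Delta_\mu}\xi,\eta\rangle\le\langle e^{-t\Delta_\mu}|\xi|,|\eta|\rangle$ for all $\eta\in C_0^\infty(E)$, differentiates in $t$, and applies the Kato inequality together with the positivity of $e^{-s\Delta_\mu}|\eta|$; this yields the monotonicity needed to conclude, just as in \cite{HSU}.

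The only real obstacle is verifying that the formal chain of equalities above is valid at points where $\xi$ vanishes — otherwise the quotient $\xi/|\xi|$ is ill-defined and $|\xi|$ may not be $C^2$. This is standard: one works with $|\xi|_\varepsilon:=\sqrt{|\xi|^2+\varepsilon^2}$, for which both sides of the Kato inequality are smooth, verifies the weighted Kato inequality for $|\xi|_\varepsilon$ by the calculation above (with $\xi/|\xi|_\varepsilon$ in place of $\xi/|\xi|$), and passes to the limit $\varepsilon\to 0$ in the distribution sense. Once this technicality is handled, the weighted case requires no new ideas beyond the compensation between the drift terms identified in the previous paragraph.
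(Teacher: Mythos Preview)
Your proposal is correct and follows essentially the same route as the paper: both reduce to the unweighted domination result by using Lemma~\ref{rough_w} together with $\Delta_\mu=\Delta-(\nabla f,\nabla\cdot)$ to show that the drift term $-\nabla_{\nabla f}\xi$ in $\bar{\Delta}_\mu\xi$ matches exactly the drift term $-(\nabla f,\nabla|\xi|_\varepsilon)$ in $\Delta_\mu|\xi|_\varepsilon$, so that the Kato-type identity for $|\xi|_\varepsilon$ carries over verbatim from the unweighted case. The only cosmetic difference is that the paper phrases this as verifying Besson's identity \eqref{eq_B} (an equality from which the Kato inequality follows) and then invokes Besson's proof, whereas you phrase it directly as a Kato inequality and invoke HSU; these are the same argument.
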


\begin{proof}
We follow G. Besson's proof of Theorem 20 in the appendix of \cite{Bes}. For $\varepsilon>0$, define 

$$|\xi|_\varepsilon=(|\xi|^2+\varepsilon^2)^{1/2}.$$
The only thing that is unclear is whether the formula

\begin{equation}\label{eq_B}
\Delta_\mu|\xi|_\varepsilon=\frac{( \bar{\Delta}_\mu\xi,\xi)}{|\xi|_\varepsilon}-\left(\frac{|\nabla \xi|^2}{|\xi|_\varepsilon}-\frac{( \nabla \xi,\xi)}{|\xi|_\varepsilon}\right),
\end{equation}
appearing on p. 173 of \cite{Bes} holds in the weighted case. If it is the case, then the whole proof of \cite[Theorem 20]{Bes} extends to the weighted case. Our starting point is formula \eqref{eq_B}, which holds for $\mu$ the Riemannian measure, by \cite{Bes}. Using Lemma \ref{rough_w}, we see that

$$(\bar{\Delta}_\mu \xi,\xi) = (\bar{\Delta} \xi,\xi) - (1/2) (\nabla f) \cdot (|\xi|^2),$$ 
and thus

$$\frac{(\bar{\Delta}_\mu \xi,\xi)}{|\xi|_\epsilon} = \frac{(\bar{\Delta} \xi,\xi)}{|\xi|_\epsilon} + (\nabla f)\cdot |\xi|_\epsilon.$$
On the other hand,

$$\begin{array}{rcl}
\Delta_\mu (|\xi|_\epsilon)&=&\Delta |\xi|_\epsilon - (\nabla f,\nabla |\xi|_\epsilon)\\\\
&=&\Delta |\xi|_\epsilon - (\nabla f)\cdot |\xi|_\epsilon.
\end{array}
$$
Thus, from the fact that \eqref{eq_B} holds for the Riemannian measure, we see that it also holds in the weighted case.

\end{proof}
We now proceed to the Bochner formula for the Hodge Laplacian. We first need to recall some facts about differential forms.  Given a vector field $X$ and a differential form $\alpha$ of degree $k$, the contraction of $X$ and $\alpha$ is $\iota_X\alpha$, a differential form of degree $k-1$ defined by

$$\iota_X\alpha(X_1,\cdots,X_{k-1})=\alpha(X,X_1,\cdots,X_{k-1}).$$
We will need the fact that the adjoint operation of $i_X$ is $X^\flat\wedge$, where $X^\flat$ is the $1$-form associated to $X$. Also of use for us will be the Cartan formula for the Lie derivative $\mathscr{L}_X\omega$ of a form $\omega$ with respect to a vector field $X$:

\begin{equation}\label{Cartan}
\mathscr{L}_X\omega=d(\iota_X\omega)+\iota_X(d\omega).
\end{equation}
Let us now consider the weighted Hodge Laplacian

$$\vec{\Delta}_\mu=dd^*_\mu+d^*_\mu d.$$
Here, $d^*_\mu$ is the adjoint of $d$, which of course depends on the measure $\mu$. For a discussion concerning the domain of $d^*_\mu$ and $\vec{\Delta}_\mu$, we refer the reader to \cite{Bue}. Let us just mention that since $M$ is complete, $\vec{\Delta}_\mu$ as defined above is equal to the closure of the essentially self-adjoint operator $d\delta+\delta d$, where $\delta$ is the formal adjoint of $d$.

\begin{Lem}\label{Hodge_w}

The following formulas hold:

$$d^*_\mu=d^*-\iota_{\nabla f},$$
and

$$\vec{\Delta}_\mu=\vec{\Delta}-\mathscr{L}_{\nabla f}.$$

\end{Lem}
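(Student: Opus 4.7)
The plan is to establish the two identities separately, deriving the second as a direct consequence of the first via the Cartan formula already recalled before the lemma.

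For the first formula, I would compute $d^*_\mu$ by pairing against the weighted inner product. Namely, given $\alpha \in C_0^\infty(\Lambda^{k-1}T^*M)$ and $\beta \in C_0^\infty(\Lambda^kT^*M)$, one writes
\begin{equation*}
\langle d\alpha, \beta\rangle_\mu = \int_M (d\alpha,\beta)_x\, e^f\, d\nu(x) = \int_M (d\alpha, e^f \beta)_x\, d\nu(x) = \langle \alpha, d^*(e^f \beta)\rangle_{\nu},
\end{equation*}
where $d^*$ denotes the unweighted codifferential. The key computational step is then the Leibniz-type identity
\begin{equation*}
d^*(g\beta) = g\, d^*\beta - \iota_{\nabla g}\beta,
\end{equation*}
valid for any smooth function $g$, which follows from the fact that the formal unweighted adjoint of $dg\wedge$ is $\iota_{\nabla g}$ (apply $d$ to $g\alpha$ and use integration by parts against $\beta$). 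Taking $g = e^f$ gives $d^*(e^f\beta) = e^f(d^*\beta - \iota_{\nabla f}\beta)$, and substituting yields $\langle d\alpha,\beta\rangle_\mu = \langle \alpha, d^*\beta - \iota_{\nabla f}\beta\rangle_\mu$, which is the desired formula $d^*_\mu = d^* - \iota_{\nabla f}$.

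For the second formula, I would simply expand
\begin{equation*}
\vec{\Delta}_\mu = dd^*_\mu + d^*_\mu d = d(d^* - \iota_{\nabla f}) + (d^* - \iota_{\nabla f})d = \vec{\Delta} - \bigl(d\,\iota_{\nabla f} + \iota_{\nabla f}\,d\bigr),
\end{equation*}
and recognize the bracketed expression as $\mathscr{L}_{\nabla f}$ via the Cartan formula \eqref{Cartan}.

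There is no real obstacle here; the only point requiring some care is the Leibniz identity for $d^*(g\beta)$, which is the computational heart of the argument. Once that identity is in place, the weighted adjoint formula follows immediately, and the Hodge Laplacian identity is a one-line consequence of Cartan's magic formula. The identities should be proved on $C_0^\infty$ sections, with the extension to the natural domains then following from completeness of $M$ and the essential self-adjointness statement recalled before the lemma.
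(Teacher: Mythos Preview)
Your proposal is correct and follows essentially the same approach as the paper: establish the Leibniz identity $d^*(g\beta) = g\,d^*\beta - \iota_{\nabla g}\beta$, apply it with $g=e^f$ to obtain $d^*_\mu = d^* - \iota_{\nabla f}$, and then deduce the Hodge Laplacian formula via Cartan's formula. The only cosmetic difference is that the paper first writes the conjugation $d^*_\mu = e^{-f}d^*e^f$ before invoking the Leibniz identity, whereas you unwind this directly through the weighted pairing.
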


\begin{proof}

By integration by parts, we see that

$$d^*_\mu=e^{-f}d^*e^{f}.$$
Using an easy integration by parts and that the adjoint of $\iota_X$ is $X^\flat\wedge$, it is not hard to see that for $u\in C_0^\infty(M)$ and $\xi\in \Lambda^kT^*M$,

$$d^*(u\xi)=ud^*\xi-\iota_{\nabla u}\xi.$$
This implies that

$$d^*_\mu=d^*-\iota_{\nabla f}.$$
Now, by the Cartan formula,

$$\begin{array}{rcl}
\vec{\Delta}_\mu&=&d(d^*-\iota_{\nabla f})+(d^*-\iota_{\nabla f})d\\\\
&=&dd^*+d^*d-d\iota_{\nabla f}-\iota_{\nabla f}d\\\\
&=&\vec{\Delta}-\mathscr{L}_{\nabla f},
\end{array}$$
which is the claimed result.

\end{proof}
For a smooth function $u$ on $M$, recall that the Hessian of $u$ is the symmetric $(0,2)$-tensor defined as

$$\mathrm{Hess}_u(X,Y)=\nabla_X\nabla_Yu-\nabla_{\nabla_XY}u.$$
Following \cite{EF}, we introduce $\mathscr{H}_u$, the {\em Hessian operator} of $u$, which is an operator acting on differential forms. Given a point $p$ of $M$, consider an orthonormal basis $(e_i)_{i=1}^n$ of $T_pM$, and let $(e_i^*)_{i=1}^n$ be the dual basis. Then, $\mathscr{H}_u$ is defined by

$$\mathscr{H}_u\omega=\sum_{i,j}\mathrm{Hess}_u(e_i,e_j)e_j^*\wedge\iota_{e_i}\omega.$$
We will need the following formula relating the Lie derivative and the covariant derivative on differential forms, and which is proved in \cite[Lemma 1.1]{EF}:

\begin{equation}\label{hess}
\mathscr{L}_{\nabla u}=\nabla_{\nabla u}-\mathscr{H}_u.
\end{equation}
We are now ready to prove:

\begin{Thm}\label{Bochner}

The following Bochner formula for the weighted Hodge Laplacian on $k$-forms holds:

$$\vec{\Delta}_{\mu}=\bar{\Delta}_\mu+\mathscr{R}_k-\mathscr{H}_f.$$

\end{Thm}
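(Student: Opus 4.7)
The argument is a direct assembly of the four ingredients that have just been established in this section, and no new analytic input is required. The plan is to start from Lemma \ref{Hodge_w}, which reads $\vec{\Delta}_\mu = \vec{\Delta} - \mathscr{L}_{\nabla f}$, and then rewrite each term on the right using an already proved identity so as to end up with $\bar{\Delta}_\mu$ plus zero-order endomorphisms acting on $\Lambda^k T^*M$.

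Concretely, I would first substitute the classical unweighted Bochner decomposition $\vec{\Delta} = \bar{\Delta} + \mathscr{R}_k$ (which is the identity used in \cite{GM} and already recalled in the Introduction), turning the Hodge identity into
$$\vec{\Delta}_\mu = \bar{\Delta} + \mathscr{R}_k - \mathscr{L}_{\nabla f}.$$
Next, I would plug in formula \eqref{hess}, namely $\mathscr{L}_{\nabla f} = \nabla_{\nabla f} - \mathscr{H}_f$, which converts the Lie-derivative term into a covariant-derivative term along $\nabla f$ plus the Hessian operator $\mathscr{H}_f$. Finally, I would invoke Lemma \ref{rough_w}, which gives exactly $\bar{\Delta} - \nabla_{\nabla f} = \bar{\Delta}_\mu$, so the $\bar{\Delta}$ and $\nabla_{\nabla f}$ pieces regroup into the weighted rough Laplacian. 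Collecting what remains yields the claimed formula $\vec{\Delta}_\mu = \bar{\Delta}_\mu + \mathscr{R}_k - \mathscr{H}_f$.

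Since the statement is a pointwise algebraic identity between local differential operators on sections of $\Lambda^k T^*M$, there is no real analytic obstacle: the identity is first verified on smooth compactly supported $k$-forms and then extends to the domain of $\vec{\Delta}_\mu$ by essential self-adjointness on the complete weighted manifold $(M,\mu)$. The only point that genuinely requires care, and which I expect to be the main bookkeeping burden of the proof, is the handling of the two minus signs appearing in $-\mathscr{L}_{\nabla f}$ and in $\mathscr{L}_{\nabla u} = \nabla_{\nabla u} - \mathscr{H}_u$: one must check that the $\nabla_{\nabla f}$ contributions line up in sign with $\bar{\Delta}$ so that Lemma \ref{rough_w} can be applied to regroup them into $\bar{\Delta}_\mu$, while the Hessian operator $\mathscr{H}_f$ survives with exactly the sign displayed in the theorem.
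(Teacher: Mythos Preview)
Your proposal is correct and essentially identical to the paper's proof: both combine Lemma~\ref{Hodge_w}, the unweighted Bochner formula, Lemma~\ref{rough_w}, and formula~\eqref{hess}, differing only in the cosmetic order of application (the paper substitutes Lemma~\ref{rough_w} before invoking \eqref{hess}, whereas you do the reverse). Your remark that the main care is in tracking the signs of the $\nabla_{\nabla f}$ and $\mathscr{H}_f$ terms is exactly on point.
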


\begin{proof}
By Lemma \ref{Hodge_w} and the unweighted Bochner formula,

$$\vec{\Delta}_{k,\mu}=\bar{\Delta}+\mathscr{R}_k-\mathscr{L}_{\nabla f}.$$
By Lemma \ref{rough_w}, we get that

$$\vec{\Delta}_{k,\mu}=\bar{\Delta}_\mu+\mathscr{R}_k-\mathscr{L}_{\nabla f}+\nabla_{\nabla f}.$$
Then, the result follows from \eqref{hess}.

\end{proof}
Despite the fact that all the ingredients needed for the proof of Theorem \ref{Bochner} are quite classical, to the authors' knowledge, the result of Theorem \ref{Bochner} has never been stated explicitely in the literature, even if closely related formula are proved in \cite{Bue}. Let us explain now why the result of Theorem \ref{Bochner}, in the particular case $k=1$, implies easily the formula for the  iterated {\em carr\'{e} du champ} $\Gamma_2$ of the weighted Laplacian (see \cite[Proposition 3]{BE}). According to \cite{BE}, $\Gamma_2$ is defined by

$$\Gamma_2(u,v)=\frac{1}{2}\left(\Delta_\mu(\nabla u,\nabla v)-(\nabla \Delta_\mu u,\nabla v)-(\nabla u,\nabla \Delta_\mu v)\right).$$
Then, the following formula for the iterated  {\em carr\'{e} du champ}  of a weighted Laplacian is presented in \cite[Proposition 3]{BE}:

\begin{equation}\label{BE}
\Gamma_2(u,v)=-(\mathrm{Hess}_u,\mathrm{Hess}_v)-(\mathrm{Ric}-\mathrm{Hess}_u)(\nabla u,\nabla v).
\end{equation}
Notice that in \cite{BE}, the Laplacian is taken with the opposite sign convention, which changes the sign of $\Gamma_2$; that's why our formula \eqref{BE} has opposite sign.  We claim that \eqref{BE} follows quite easily from Theorem \ref{Bochner}: notice first that since the weighted Hodge Laplacian commutes with the differential $d$,

$$(\nabla \Delta_\mu u,\nabla v)=(d \Delta_\mu u,d v)=(\vec{\Delta}_\mu du,dv),$$
which, by Theorem \ref{Bochner} for $k=1$ and the fact that $\mathscr{R}_1$ identifies canonically with the Ricci curvature, gives

$$(\nabla \Delta_\mu u,\nabla v)=(\bar{\Delta}_\mu du,dv)+\mathrm{Ric}(du,dv)-(\mathscr{H}_fdu,dv).$$
It is easy to see from the definition of $\mathscr{H}_f$ that

$$(\mathscr{H}_fdu,dv)=\mathrm{Hess}_f(\nabla u,\nabla v).$$
Thus, using the symmetry of $\mathrm{Hess}_f$,

\begin{equation}\label{Eq-w}
\begin{array}{rcl}
\frac{1}{2}\left((\nabla \Delta_\mu u,\nabla v)+(\nabla u,\nabla \Delta_\mu v)\right)&=&\frac{1}{2}\left((\bar{\Delta}_\mu du,dv)+(du,\bar{\Delta}_\mu dv)\right)\\\\
&&+\mathrm{Ric}(du,dv)-\mathrm{Hess}_f(\nabla u,\nabla v).
\end{array}
\end{equation}
Next, let us compute $\Delta_\mu(\nabla u,\nabla v)$: by Lemma \ref{rough_w},

$$\begin{array}{rcl}
\Delta_\mu (\nabla u,\nabla v)&=&\Delta_\mu (d u,d v)\\\\
&=&-\mathrm{Tr}_{X,Y}\nabla_{X,Y}^2(du,dv)+(\nabla_{\nabla f}du,dv)+(du,\nabla_{\nabla f}dv)\\\\
&=&-\mathrm{Tr}_{X,Y}(\nabla_{X,Y}^2du,dv)-\mathrm{Tr}(du,\nabla_{X,Y}^2dv)\\\\
&&-2\mathrm{Tr}_{X,Y}(\nabla_X du,\nabla_Y dv)+(\nabla_{\nabla f}du,dv)+(du,\nabla_{\nabla f}dv)\\\\
&=&(\bar{\Delta}du,dv)+(du,\bar{\Delta}dv)-2\mathrm{Tr}_{X,Y}(\nabla_X du,\nabla_Y dv)\\\\
&&+(\nabla_{\nabla f}du,dv)+(du,\nabla_{\nabla f}dv)\\\\
&=&(\bar{\Delta}_{\mu} du,dv)+(du,\bar{\Delta}_{\mu} dv)-2\mathrm{Tr}_{X,Y}(\nabla_X du,\nabla_Y dv).
\end{array}$$
Using the fact that $\mathrm{Hess}_h$ can be computed by the formula:

$$\mathrm{Hess}_h(X,Y)=(X,\nabla_Y\nabla h),$$
one sees that

$$\mathrm{Tr}_{X,Y}(\nabla_X du,\nabla_Y dv)=(\mathrm{Hess}_u,\mathrm{Hess}_v).$$
Thus, by \eqref{Eq-w},

$$\Gamma_2(u,v)=-\mathrm{Ric}(\nabla u,\nabla v)+\mathrm{Hess}_f(\nabla u,\nabla v)-(\mathrm{Hess}_u,\mathrm{Hess}_v),$$
which proves \eqref{BE}.

\cqfd

\bigskip

Finally, we want to explain why off- and on-diagonal (Gaussian) estimates \eqref{UEL}, \eqref{DUEL} are equivalent for a generalised non-negative Schr\"{o}dinger operator $\mathcal{L}=\nabla^*\nabla+\mathcal{R}$, $\mathcal{R}\in L^1_{loc}$. Let us state this as a theorem:

\begin{Thm}\label{off-diago}

Let $(M,\mu)$ be a complete, non-compact weighted Riemannian manifold, and let $\mathcal{L}=\nabla^*\nabla +\mathcal{R}$, $\mathcal{R}\in L^1_{loc}$ be a non-negative generalised Schr\"{o}dinger operator acting on sections of $E\to M$. Then, the on-diagonal estimate \eqref{DUEL} and the Gaussian off-diagonal estimate \eqref{UEL} for the heat kernel of $\mathcal{L}$ are equivalent.

\end{Thm}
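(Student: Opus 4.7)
The plan is to reduce the proof to Sikora's abstract equivalence from \cite{Sik}, which asserts that for a non-negative self-adjoint operator on a doubling space satisfying the Davies-Gaffney estimates, \eqref{DUEL} and \eqref{UEL} are equivalent. The implication \eqref{UEL}$\Rightarrow$\eqref{DUEL} being trivial (set $y=x$), the only real task is to establish Davies-Gaffney for $\mathcal{L}$, namely that for all open sets $U_1,U_2\subset M$ and all $f_i\in L^2(E)$ supported in $U_i$,
\begin{equation}\label{DG_goal}
|\langle e^{-t\mathcal{L}}f_1,f_2\rangle|\leq e^{-d(U_1,U_2)^2/(4t)}\|f_1\|_2\|f_2\|_2.
\end{equation}

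To prove \eqref{DG_goal} I would use Davies' classical perturbation method, adapted from the scalar to the bundle setting. Fix a bounded Lipschitz function $\phi$ on $M$ with $|\nabla \phi|\leq 1$, a parameter $\rho>0$, and set $\omega(t)=e^{-t\mathcal{L}}f$ and $v(t)=e^{\rho\phi}\omega(t)$. The crucial point is that, because $\nabla$ is a metric connection on $E$ and $\phi$ is scalar, the pointwise identity
\begin{equation*}
|\nabla(e^{\rho\phi}\xi)|^{2}=e^{2\rho\phi}|\nabla\xi|^{2}+2\rho e^{2\rho\phi}(\nabla\phi\otimes\xi,\nabla\xi)+\rho^{2}e^{2\rho\phi}|\nabla\phi|^{2}|\xi|^{2}
\end{equation*}
combined with a direct calculation of $\langle\mathcal{L}\omega,e^{2\rho\phi}\omega\rangle=\langle\nabla\omega,\nabla(e^{2\rho\phi}\omega)\rangle+\langle\mathcal{R}\omega,e^{2\rho\phi}\omega\rangle$ gives, after the cross terms $\pm\rho(\nabla v,d\phi\otimes v)$ cancel, the clean identity
\begin{equation*}
\langle\mathcal{L}\omega,e^{2\rho\phi}\omega\rangle = Q_{\mathcal{L}}(v)-\rho^{2}\int|\nabla\phi|^{2}|v|^{2}d\mu,
\end{equation*}
where $Q_\mathcal{L}$ is the quadratic form of $\mathcal{L}$. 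Invoking the non-negativity of $\mathcal{L}$ (i.e.\ $Q_\mathcal{L}(v)\geq 0$) and $|\nabla\phi|\leq 1$, one obtains $\frac{d}{dt}\|v(t)\|_{2}^{2}\leq 2\rho^{2}\|v(t)\|_{2}^{2}$, and Gronwall yields
\begin{equation*}
\int e^{2\rho\phi}|\omega(t)|^{2}d\mu\leq e^{2\rho^{2}t}\int e^{2\rho\phi}|f|^{2}d\mu.
\end{equation*}
Choosing $\phi=d(\cdot,U_{1})$ (so that $\phi\equiv0$ on $U_{1}$ and $\phi\geq d(U_{1},U_{2})=:r$ on $U_{2}$), restricting the left-hand side to $U_{2}$, and optimizing $\rho=r/(2t)$ produces \eqref{DG_goal}.

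The main obstacle is justifying these formal manipulations when $\mathcal{R}_{-}$ is unbounded, so that $\mathcal{L}$ is defined as a Friedrichs extension and the form domain need not contain $e^{\rho\phi}\omega(t)$ a priori. This is handled by a standard two-step approximation: first truncate $\phi$ to be compactly supported Lipschitz (so that multiplication by $e^{\rho\phi}$ preserves the form domain), which gives the estimate with the truncated exponential weight; then let the truncation exhaust $M$ and pass to the limit by monotone convergence. Approximating $f$ by elements of $C_{0}^{\infty}(E)$ and using standard density arguments for the Friedrichs extension completes the justification. Once \eqref{DG_goal} is established, the stochastic-type doubling hypothesis \eqref{d} together with \eqref{DUEL} and Davies-Gaffney  plug directly into \cite[Theorems~1 and~4]{Sik} to yield \eqref{UEL}, finishing the proof.
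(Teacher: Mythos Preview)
Your proposal is correct and follows essentially the same strategy as the paper: reduce to Sikora's criterion by establishing Davies--Gaffney for $e^{-t\mathcal{L}}$ via Davies' exponential-weight energy argument. The only difference is organizational. The paper first invokes \cite[Theorem~3.3]{CS} to reduce Davies--Gaffney for $\mathcal{L}=\bar{\Delta}_\mu+\mathcal{R}$ to Davies--Gaffney for the rough Laplacian $\bar{\Delta}_\mu$ alone, and then runs the weighted energy estimate $E'(t)\leq \tfrac{\kappa^2}{2}E(t)$ using only $\int|\nabla\xi_t|^2 e^\zeta\geq 0$ and Cauchy--Schwarz. You instead work directly with $\mathcal{L}$ and exploit the exact algebraic identity $\langle\mathcal{L}\omega,e^{2\rho\phi}\omega\rangle=Q_{\mathcal{L}}(v)-\rho^2\int|\nabla\phi|^2|v|^2$, so that the non-negativity hypothesis $Q_{\mathcal{L}}\geq 0$ replaces the detour through \cite{CS}. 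Your route is marginally cleaner and makes the role of non-negativity explicit; the paper's route has the advantage of isolating the bundle computation from the potential term. Both yield the same Gronwall bound (with $\zeta=2\rho\phi$, the constants match). Your handling of the unbounded-$\mathcal{R}_-$ case by Lipschitz truncation of $\phi$ and density is appropriate and standard. One small remark: the theorem as stated does not assume \eqref{d}, so your closing sentence invoking it is slightly out of place, though in the paper's context \eqref{d} is ambient and Sikora's argument in any case delivers the Gaussian bound with $\sqrt{V(x,\sqrt t)V(y,\sqrt t)}$ in the denominator without doubling.
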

The proof relies on a general principle, according to which the validity of the Davies-Gaffney estimates for the heat operator of $\mathcal{L}$ (or, equivalently, speed of propagation $1$ for the associated wave equation), is enough to pass from on-diagonal estimate for the heat kernel, to Gaussian off-diagonal estimates. See \cite{Sik}. Let us introduce the Davies-Gaffney estimate: for every $\xi_k$, $k=1,2$ smooth sections of $E$, that coincide with the zero section respectively outside of the geodesic ball $B(x_k,r_k)$,

\begin{equation}\label{GaDa}
|\langle e^{-t\mathcal{L}}\xi_1,\xi_2\rangle| \leq Ce^{-\frac{r^2}{4t}}||\xi_1||_2||\xi_2||_2,\,\forall t>0,
\end{equation}
for all $0\leq  r<d(x_1,x_2)-(r_1+r_2)$. Thus, the results of \cite{Sik} have the consequence that \eqref{DUEL} together with \eqref{GaDa} imply \eqref{UEL}. Thus, in order to prove Theorem \ref{off-diago}, it is enough to prove that \eqref{GaDa} holds. For this, we follow a strategy developped first in \cite{Sik}, and improved later on in \cite{CS}:

\begin{proof}

Following the argument at the end of the proof of \cite[Theorem 3.3]{CS}, we see that it is enough to prove \eqref{GaDa} for $\mathcal{L}=\nabla^*\nabla=\bar{\Delta}_\mu$, the weighted rough Laplacian. For this purpose, let us take $\zeta\in C^\infty(M)$ such that $|\nabla \zeta|\leq \kappa$ for some real number $\kappa$. Take $\xi_0$ a smooth compactly supported  section, let $\xi_t:=e^{-t\mathcal{L}}\xi_0$, and define the energy

$$E(t)=\int_M|\xi_t|^2e^{\zeta}d\mu.$$
Then the proof of \cite[Theorem 6]{Sik} or \cite[Theorem 3.3]{CS} shows that in order to prove \eqref{GaDa}, it is enough to prove the estimate:

\begin{equation}\label{En}
E'(t)\leq \frac{\kappa^2E(t)}{2}.
\end{equation}
Thus, let now show \eqref{En}. We compute:

$$
\begin{array}{rcl}  
\frac{E'(t)}{2}&=&-\int_M(\nabla^*\nabla \xi_t,\xi_t)e^{\zeta}d\mu\\\\
&=&-\int_M(\nabla \xi_t,\nabla_t(e^\zeta\xi_t)d\mu\\\\
&=&-\int_M|\nabla \xi_t|^2e^\zeta d\mu-\int_M(\nabla \xi_t,d\zeta\otimes\xi_t)e^\zeta d\mu\\\\
&\leq& -\int_M|\nabla \xi_t|^2e^\zeta d\mu+\int_M|\nabla \xi_t|^2e^\zeta d\mu+\frac{1}{4}\int_M|d\zeta|^2|\xi_t|^2e^\zeta d\mu\\\\
&=&\frac{\kappa^2}{4}E(t),
\end{array}
$$
where we have used the fact that $\nabla (f\xi)=df\otimes\xi+f\nabla \xi$, by definition of a (Koszul) connection, and the inequality $|d\zeta\otimes \xi_t|\leq |d\zeta||\xi_t|$. This proves \eqref{En}, and concludes the proof of Theorem \ref{off-diago}.

\end{proof}

\begin{center}{\bf Acknowledgments} \end{center}
TC's research was  undertaken while he was employed by the Australian National University;  TC's and AS's research was  supported by an Australian Research Council (ARC) grant DP  130101302; TC acknowledges the hospitality of the D\'epartement de Math\'ematiques et Applications at the Ecole Normale Sup\'erieure,  a member institution of PSL Research University. This research was done while BD was supported by post-doctoral fellowships at the Technion (Haifa, Israel), and at the University of British Columbia (Vancouver, Canada). BD thanks both institutions for having provided him with excellent research environments, and acknowledges the support of the Israel Research Fundation, and the Natural Sciences and Engineering Research Council of Canada.

\end{document}